\pdfoutput=1
\documentclass[]{interact}

\usepackage{mathtools}
\usepackage{array}
\usepackage{tabularx}
\usepackage{multirow}
\usepackage{algorithm,algpseudocode}
\usepackage[numbers,sort&compress]{natbib}
\bibpunct[, ]{[}{]}{,}{n}{,}{,}
\renewcommand\bibfont{\fontsize{10}{12}\selectfont}
\usepackage[colorlinks=true,linkcolor=blue,citecolor=blue,urlcolor=blue]{hyperref}
\usepackage[nameinlink,noabbrev]{cleveref}

\DeclareMathOperator{\prox}{prox}

\theoremstyle{plain}
\newtheorem{theorem}{Theorem}[section]
\newtheorem{lemma}[theorem]{Lemma}
\newtheorem{proposition}[theorem]{Proposition}
\newtheorem{corollary}[theorem]{Corollary}

\theoremstyle{definition}
\newtheorem{definition}[theorem]{Definition}
\newtheorem{assumption}[theorem]{Assumption}

\theoremstyle{remark}
\newtheorem{remark}[theorem]{Remark}

\begin{document}

\title{A Decomposed Bilevel Search for Variable-Metric Proximal Gradient Methods}

\author{
\name{Xinpeng Li\textsuperscript{a}\thanks{CONTACT Xinpeng Li. Email: \texttt{lixinpeng23@mails.ucas.ac.cn}} and Ya-xiang Yuan\textsuperscript{b}}
\affil{\textsuperscript{a}State Key Laboratory of Mathematical Sciences, Academy of Mathematics and Systems Science, Chinese Academy of Sciences, and University of Chinese Academy of Sciences, China; \textsuperscript{b}State Key Laboratory of Mathematical Sciences, Academy of Mathematics and Systems Science, Chinese Academy of Sciences, China}
}

\maketitle

\begin{abstract}
Variable-metric proximal methods accelerate composite convex optimization,
  but the scaled proximal map induced by a quasi-Newton metric rarely
  has a closed form.  We develop \emph{Decomposed Bilevel Search} (DBS), based
  on a diagonal-plus-rank-one factor \(X=D+uv^\top\) whose diagonal scaling
  satisfies a weak secant equation.  The induced inverse metric
  \(B^{-1}=XX^\top\) recovers zero-memory DFP/BFGS-type Broyden members, while
  the factor form reduces each scaled proximal step to a two-dimensional
  monotone residual system.  Each residual evaluation requires one
  diagonal-metric proximal map, and a certified bilevel solve reaches target
  accuracy \(\epsilon\) in
  \(\mathcal O((d+T_p)\log^2(1/\epsilon))\) work, where \(T_p\) is the cost of
  that proximal map.  Under strong convexity, the outer method converges
  linearly with exact and inexact inner solves.  In the scalar
  specialization \(D=\alpha I\), the oracle uses only ordinary proximal
  evaluations of the regularizer and no generalized Jacobian or active-set
  information.

  Experiments on ordered-weighted \(\ell_1\) (SLOPE/OWL) and group-lasso
  logistic regression evaluate both the inner oracle and the full outer
  method.  The oracle solves
  high-dimensional SLOPE scaled proximal subproblems of
  condition number up to \(4\times10^6\) using a few hundred ordinary
  proximal evaluations.  In SLOPE outer benchmarks, the
  warm-started oracle keeps the scaled-proximal overhead controlled and DBS
  reaches stringent targets with far fewer gradient evaluations than
  Lipschitz-normalized FISTA; on
  \texttt{real-sim} this becomes a clear target-time advantage.  On
  group-lasso logistic regression, DBS is reliable on synthetic correlated
  instances and fastest on a real-data grouped-copy instance.
\end{abstract}

\begin{keywords}
composite optimization, variable-metric proximal methods, proximal quasi-Newton methods, scaled proximal mappings, self-scaled BFGS/DFP updates, diagonal-plus-rank-one factorization, bilevel root-finding, monotone localization
\end{keywords}

\begin{amscode}
90C25, 90C30, 90C53, 65K05
\end{amscode}

\section{Introduction}
  We consider the composite convex minimization problem
  \begin{equation}\label{eq:1.1}
      \min_{x\in\mathbb{R}^d} F(x):=f(x)+h(x),
  \end{equation}
  where \(f:\mathbb{R}^d\to\mathbb{R}\) is convex and continuously
  differentiable with \(L\)-Lipschitz continuous gradient, and
  \(h\in\Gamma_0(\mathbb{R}^d)\) is possibly nonsmooth. Here
  \(\Gamma_0(\mathbb{R}^d)\) denotes the class of proper, lower semicontinuous
  convex functions from \(\mathbb{R}^d\) to
  \(\mathbb{R}\cup\{+\infty\}\). We assume that the solution set
  \(\arg\min F\) is nonempty, and let \(x^*\in\arg\min F\).

  Problem~\eqref{eq:1.1} covers many regularized learning and signal-processing
  models, including elastic-net regression, sparse classification with logistic
  or squared-hinge losses, and structured penalties such as sorted
  \(\ell_1\), group, total-variation, and spectral regularizers.  In many of
  these models the ordinary proximal mapping of \(h\) is inexpensive, while proximal mappings in non-Euclidean quasi-Newton metrics are
  substantially difficult.

  A simple and scalable approach for~\eqref{eq:1.1} is the proximal gradient
  method~\cite{combettes2011proximal},
  \[
      x_{k+1}
      =
      \operatorname{prox}_{\eta_k h}
      \bigl(x_k-\eta_k\nabla f(x_k)\bigr),
  \]
  where
  \[
      \operatorname{prox}_{\eta h}(x)
      =
      \arg\min_y
      \left\{
          h(y)+\frac{1}{2\eta}\|y-x\|_2^2
      \right\}.
  \]
  Its accelerated variant FISTA~\cite{beck2009fast} achieves the optimal
  \(\mathcal O(1/k^2)\) convergence rate for convex objectives, and linear
  convergence is available under strong convexity and standard stepsize
  conditions \cite{nesterov2013gradient}.

  Despite their favorable complexity guarantees, first-order proximal methods
  often converge slowly on ill-conditioned problems. This motivates the use of
  curvature information in the proximal step. A standard approach replaces the
  Euclidean metric by a positive definite matrix \(B_k\), leading to the
  variable-metric iteration
  \begin{equation}\label{eq:1.4}
      x_{k+1}
      =
      \arg\min_{x\in\mathbb{R}^d}
      \left\{
          \langle g_k,x-x_k\rangle
          +
          \frac{1}{2\eta_k}
          \langle x-x_k,B_k(x-x_k)\rangle
          +
          h(x)
      \right\},
  \end{equation}
  where \(g_k=\nabla f(x_k)\) and \(B_k\in\mathbb{S}_{++}^{d\times d}\).
  General variable-metric forward--backward methods study such iterations under
  bounded metrics, inexact proximal steps, line searches, and extrapolation
  while preserving convergence guarantees
  \cite{bonettini2016variable,bonettini2016extrapolation,salzo2017variable}.
  When \(B_k\) is chosen as the Hessian or as a suitable quasi-Newton
  approximation satisfying standard Dennis--Mor\'e type conditions, proximal
  Newton and proximal quasi-Newton methods can achieve fast local convergence
  \cite{lee2014proximal}. A natural quasi-Newton requirement is the secant
  condition
  \begin{equation}\label{eq:1.8}
      B_k s_{k-1}=y_{k-1},
      \qquad
      s_{k-1}=x_k-x_{k-1},\quad
      y_{k-1}=g_k-g_{k-1}.
  \end{equation}
  
  The main computational difficulty lies in the evaluation of the scaled proximal
  mapping
  \begin{equation}\label{eq:1.7}
      x_{k+1}
      =
      \operatorname{prox}_{\eta_k h}^{B_k}
      \bigl(x_k-\eta_k B_k^{-1}g_k\bigr),
  \end{equation}
  which generally has no closed-form expression for nontrivial metrics \(B_k\).
  Derivative-based
  inner solvers, such as semismooth Newton or spectral projected-gradient
  methods, can be very effective when the generalized derivative structure of
  the proximal operator is available, but this structure is often
  regularizer-specific \cite{andrew2007scalable,becker2011templates}.

  A common way to avoid this inner-solver cost is to restrict the metric so that
  the scaled proximal map has an explicit or low-cost form.  Barzilai--Borwein
  and related spectral-gradient rules provide especially inexpensive scalar
  scalings
  \cite{barzilai1988two,raydan1997barzilai,birgin2000nonmonotone}.  Recent
  variable-metric proximal-gradient variants also use inexpensive diagonal or
  coordinatewise scalings
  \cite{park2020variable,yu2023mini}.  These scalar and diagonal
  proximal-gradient methods are usually inexpensive and empirically effective.
  However, a purely scalar or diagonal metric encodes only spectral or weak-secant curvature information. It cannot represent the curvature corrections produced by quasi-Newton updates.

  Becker and Fadili introduced a zero-memory symmetric rank-one method in which
  a diagonal plus/minus rank-one metric reduces the scaled proximal mapping to
  a one-dimensional equation for separable regularizers
  \cite{becker2012quasi}. They subsequently extended this
   to diagonal plus/minus rank-\(r\) metrics
  \cite{becker2019quasi}.  These works show that low-rank metric structure can
  make variable-metric proximal steps tractable.  Our motivation is to retain a
  proximal-oracle interface for structured regularizers whose proximal maps are
  available but whose generalized Jacobians or active-set descriptions are
  inconvenient.  Instead of inserting the low-rank metric directly into the
  proximal KKT system, we work with a diagonal-plus-rank-one factor
  \(X=D+uv^\top\).  This factor-coordinate viewpoint leads to a
  two-dimensional monotone residual system whose evaluation requires one
  diagonal-metric proximal map and no generalized Jacobian information from the
  proximal operator.

  The main contributions of this paper are summarized as follows.
  \begin{itemize}
     \item We propose diagonal-plus-rank-one factorizations for diagonally
      initialized zero-memory quasi-Newton metrics.  The resulting
      inverse-side and Hessian-side metrics satisfy secant equations for the
      damped curvature pair and recover zero-memory DFP/BFGS-type Broyden
      members.

      \item We reduce the scaled proximal subproblem induced by these factors
      to a two-dimensional monotone system whose residual evaluation requires
      exactly one diagonal-metric proximal evaluation.

      \item We give a certified bilevel solve and a safeguarded oracle for this
      system, with worst-case cost
      \(\mathcal O((d+T_p)\log(R_t/\epsilon)\log(R_{\xi}/\epsilon))\), or
      \(\mathcal O((d+T_p)\log^2(1/\epsilon))\) for fixed problem data.  The
      outer oracle includes a warm start and a relative localization tolerance
      while retaining the certified bilevel fallback.

      \item We give admissibility conditions that ensure uniform positive
      definiteness, linear convergence under strong convexity, and the
      corresponding overall complexity bound for two-sided DBS methods.

      \item We conduct numerical experiments on ordered-weighted \(\ell_1\)
      (SLOPE/OWL) and group-lasso logistic regression.  The results isolate
      the scaled proximal oracle, quantify the warm-started outer cost on
      SLOPE, and show that the iteration advantage can become a target-time
      advantage on real-data SLOPE and group-lasso instances.
  \end{itemize}

The rest of the paper is organized as follows.
  Section~\ref{sec:prelim} gives notation and preliminary proximal facts.
  Section~\ref{sec:factorization-search} develops the DBS proximal oracle and
  the associated Broyden-type factorizations.
  Section~\ref{sec:two-sided-dbs} presents the two-sided DBS algorithmic
  framework and the convergence and complexity analysis.
  Section~\ref{sec:experiments} reports the numerical experiments, and
  Section~\ref{sec:conclusions} concludes the paper.

\section{Notation and Preliminaries}
  \label{sec:prelim}

  Throughout the paper, \(\|\cdot\|\) denotes the Euclidean norm for vectors and
  the induced spectral norm for matrices, unless otherwise specified.  For a
  symmetric positive definite matrix \(B\), we use
  \[
      \langle x,y\rangle_B=x^\top By,
      \qquad
      \|x\|_B^2 = x^\top Bx .
  \]
  The identity matrix is denoted by \(I\), and
  \(\mathbb S_{++}^{d\times d}\) denotes the set of symmetric positive definite
  \(d\times d\) matrices.  For a scalar \(t\), we write
  \([t]_+=\max\{t,0\}\).  For \(P\in\mathbb S_{++}^{d\times d}\), the
  \(P\)-metric proximal mapping with parameter \(\sigma>0\) is
  \[
      \prox_{\sigma h}^{P}(x)
      =
      \arg\min_z
      \left\{
          \frac12\|z-x\|_P^2+\sigma h(z)
      \right\}.
  \]

   We also recall several standard facts about metric proximal mappings and
   elementary convex analysis \cite{bauschkeCombettes2017} that are key to our
   analysis.

  \begin{proposition}[Variational characterization of the proximal mapping]
  \label{prop:prox-variational}
  Let \(h\in\Gamma_0(\mathbb R^d)\), \(\sigma>0\), and
  \(P\in\mathbb S_{++}^{d\times d}\).  For \(x,p\in\mathbb R^d\),
  \[
      p=\prox_{\sigma h}^{P}(x)
  \quad\Longleftrightarrow\quad
      \langle y-p,x-p\rangle_P
      \le
      \sigma\bigl(h(y)-h(p)\bigr),
      \qquad
      \forall y\in\mathbb R^d .
  \]
  \end{proposition}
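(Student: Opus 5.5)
The plan is to treat the objective
\[
\Phi(z)=\tfrac12\|z-x\|_P^2+\sigma h(z)
\]
as the sum of a strongly convex differentiable quadratic and a function in \(\Gamma_0(\mathbb R^d)\). First I will check that \(\prox_{\sigma h}^{P}(x)\) is well defined. \(\Phi\) is proper and lower semicontinuous. It is also \(\lambda_{\min}(P)\)-strongly convex, because \(P\in\mathbb S_{++}^{d\times d}\) and \(h\) is convex. Since \(h\) has an affine minorant, \(\Phi\) is coercive. Hence \(\Phi\) has a unique minimizer, and the statement is meaningful for every \(p\) as an assertion about that minimizer.

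For the direction (\(\Rightarrow\)), let \(p\) be the minimizer and fix \(y\in\mathrm{dom}\,h\); if \(y\notin\mathrm{dom}\,h\) the right-hand side is \(+\infty\) and the inequality is trivial. Set \(z_t=p+t(y-p)\) for \(t\in(0,1)\). Optimality gives \(\Phi(p)\le\Phi(z_t)\). Convexity of \(h\) gives \(h(z_t)\le (1-t)h(p)+t h(y)\). Expanding the quadratic,
\[
\tfrac12\|z_t-x\|_P^2=\tfrac12\|p-x\|_P^2-t\langle y-p,x-p\rangle_P+\tfrac{t^2}{2}\|y-p\|_P^2 .
\]
Combining these, cancelling \(\tfrac12\|p-x\|_P^2\), and dividing by \(t\) yields
\[
\langle y-p,x-p\rangle_P\le\sigma\bigl(h(y)-h(p)\bigr)+\tfrac t2\|y-p\|_P^2 .
\]
Letting \(t\downarrow0\) gives the claim. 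Here \(h(p)<\infty\) because \(p\) minimizes the proper function \(\Phi\).

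For the direction (\(\Leftarrow\)), assume the inequality holds for all \(y\). It forces \(h(p)<\infty\), since otherwise taking \(y\in\mathrm{dom}\,h\) makes the right-hand side \(-\infty\). For any \(y\), use the exact identity
\[
\tfrac12\|y-x\|_P^2=\tfrac12\|p-x\|_P^2-\langle y-p,x-p\rangle_P+\tfrac12\|y-p\|_P^2 .
\]
This gives
\[
\Phi(y)-\Phi(p)=\sigma\bigl(h(y)-h(p)\bigr)-\langle y-p,x-p\rangle_P+\tfrac12\|y-p\|_P^2\ge\tfrac12\|y-p\|_P^2\ge0 .
\]
So \(p\) minimizes \(\Phi\), i.e.\ \(p=\prox_{\sigma h}^{P}(x)\). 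Strong convexity also makes this minimizer unique, which is consistent with the first step.

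The only delicate point is handling extended values of \(h\) in the limiting argument, namely ensuring \(h(p)\) is finite and treating \(y\notin\mathrm{dom}\,h\) separately. As an alternative, one could invoke the subdifferential sum rule to get \(P(x-p)\in\sigma\partial h(p)\) and read off the subgradient inequality. I would avoid this route, since the sum rule requires a qualification condition that the direct convex-combination argument does not need.
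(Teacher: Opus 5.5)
Your proof is correct and is essentially the standard argument the paper relies on. The paper does not prove this proposition itself but recalls it from Bauschke--Combettes, and the proof there consists of the same two steps you use: the convex-combination limit along $z_t=p+t(y-p)$, $t\downarrow0$, for ($\Rightarrow$), and the three-point expansion of $\tfrac12\|y-x\|_P^2$ for ($\Leftarrow$), here carried out in the inner product $\langle\cdot,\cdot\rangle_P$ with $\sigma h$ in place of $h$.

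One small correction to your closing remark: the sum-rule route would not need any extra hypothesis here. The quadratic term is finite and continuous on all of $\mathbb R^d$, so the qualification condition holds automatically.
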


  \begin{corollary}[Optimality condition]
  \label{cor:prox-opt}
  Let \(h\in\Gamma_0(\mathbb R^d)\), \(\sigma>0\), and
  \(P\in\mathbb S_{++}^{d\times d}\).  For \(x,p\in\mathbb R^d\),
  \[
      p=\prox_{\sigma h}^{P}(x)
      \quad\Longleftrightarrow\quad
      \frac{1}{\sigma}P(x-p)\in\partial h(p).
  \]
  \end{corollary}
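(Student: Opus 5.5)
The plan is to derive Corollary~\ref{cor:prox-opt} from Proposition~\ref{prop:prox-variational} by identifying the variational inequality there with the subgradient inequality at \(p\). Recall that \(g\in\partial h(p)\) means \(h(y)\ge h(p)+\langle g,y-p\rangle\) for all \(y\in\mathbb R^d\). Take \(g=\frac1\sigma P(x-p)\). Because \(P\) is symmetric,
\[
\langle g,y-p\rangle=\tfrac1\sigma (y-p)^\top P(x-p)=\tfrac1\sigma\langle y-p,x-p\rangle_P .
\]
Multiplying the subgradient inequality by \(\sigma>0\) gives exactly \(\langle y-p,x-p\rangle_P\le\sigma(h(y)-h(p))\) for all \(y\). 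Hence the right-hand sides of the two statements coincide, and the equivalence with \(p=\prox^P_{\sigma h}(x)\) follows directly from Proposition~\ref{prop:prox-variational}.

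There is one point to check. If \(h(p)=+\infty\), the subdifferential is empty by convention. Both sides are then consistently false: the proximal point lies in \(\operatorname{dom}h\), and the inequality fails at any \(y\in\operatorname{dom}h\), which is nonempty since \(h\) is proper.

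As an independent route, one could argue directly. The objective \(\phi(z)=\frac12\|z-x\|_P^2+\sigma h(z)\) is strongly convex, proper and lsc, so it has a unique minimizer. That minimizer is characterized by Fermat's rule, \(0\in\partial\phi(p)\). The sum rule applies because the quadratic term is finite and differentiable everywhere, giving \(0\in P(p-x)+\sigma\partial h(p)\), which rearranges to the claim.

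There is no real obstacle here. The only care needed is the symmetry of \(P\), used to match the \(P\)-inner product with the Euclidean pairing, and the domain convention.
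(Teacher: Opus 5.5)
Your argument is correct and follows the route the paper intends. The paper gives no standalone proof: it recalls the statement as a standard fact from Bauschke--Combettes and places it as a corollary of Proposition~\ref{prop:prox-variational}, and matching that variational inequality with the subgradient inequality at \(p\) is exactly what you do (your Fermat/sum-rule argument is the other standard derivation). One small correction: the identity \(\langle P(x-p),y-p\rangle=\langle y-p,x-p\rangle_P\) holds for any matrix \(P\), because the Euclidean pairing is symmetric, so symmetry of \(P\) is actually needed only in your second route, where it gives \(\nabla\tfrac12\|z-x\|_P^2=P(z-x)\).
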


  \begin{corollary}[Firm nonexpansiveness]
  \label{cor:prox-firm}
  Let \(h\in\Gamma_0(\mathbb R^d)\), \(\sigma>0\), and
  \(P\in\mathbb S_{++}^{d\times d}\).  For \(x,y\in\mathbb R^d\),
  \[
      \|\prox_{\sigma h}^{P}(x)-\prox_{\sigma h}^{P}(y)\|_P^2
      \le
      \left\langle
      \prox_{\sigma h}^{P}(x)-\prox_{\sigma h}^{P}(y),
      x-y
      \right\rangle_P .
  \]
  \end{corollary}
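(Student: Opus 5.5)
We obtain the firm nonexpansiveness inequality directly from the variational characterization in Proposition~\ref{prop:prox-variational}, applied twice with the roles of the test point exchanged. Set \(p=\prox_{\sigma h}^{P}(x)\) and \(q=\prox_{\sigma h}^{P}(y)\). Both are well defined and single-valued, since the objective \(z\mapsto \frac12\|z-x\|_P^2+\sigma h(z)\) is proper, lower semicontinuous, and strongly convex because \(P\in\mathbb S_{++}^{d\times d}\). Also, \(h(p)\) and \(h(q)\) are finite, because the minimum value is finite for proper \(h\).

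First, we apply Proposition~\ref{prop:prox-variational} at the point \(x\) with the test point \(q\):
\[
\langle q-p,\,x-p\rangle_P\le \sigma\bigl(h(q)-h(p)\bigr).
\]
Next, we apply it at the point \(y\) with the test point \(p\):
\[
\langle p-q,\,y-q\rangle_P\le \sigma\bigl(h(p)-h(q)\bigr).
\]
Adding the two inequalities cancels the \(h\)-terms, which is legitimate because all four values are finite. This gives
\[
\langle p-q,\,(p-x)-(q-y)\rangle_P\le 0 ,
\]
that is, \(\langle p-q,\,(x-y)-(p-q)\rangle_P\ge 0\). Expanding the left side yields
\[
\|p-q\|_P^2\le\langle p-q,\,x-y\rangle_P ,
\]
which is the claim.

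The argument is routine and has no real obstacle. The one point needing care is the sign bookkeeping in the bilinear form \(\langle\cdot,\cdot\rangle_P\): we must write \(\langle q-p,x-p\rangle_P=\langle p-q,p-x\rangle_P\) and use symmetry of \(P\) so the two inequalities combine cleanly. An equivalent route uses Corollary~\ref{cor:prox-opt} together with monotonicity of \(\partial h\). From \(\sigma^{-1}P(x-p)\in\partial h(p)\) and \(\sigma^{-1}P(y-q)\in\partial h(q)\), monotonicity gives \(\langle P((x-p)-(y-q)),\,p-q\rangle\ge 0\), which is the same inequality.
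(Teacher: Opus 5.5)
Your proof is correct and is the derivation the paper intends. The paper gives no proof of its own: it records this fact as a standard corollary of Proposition~\ref{prop:prox-variational}. Applying that variational inequality at $x$ with test point $q$ and at $y$ with test point $p$, then adding, is exactly that argument.

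One minor point: combining the two inequalities uses only bilinearity of $\langle\cdot,\cdot\rangle_P$. Symmetry of $P$ matters only in your alternative route through monotonicity of $\partial h$.
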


  \begin{proposition}[Envelope rule]
  \label{prop:envelope}
  Let \(\Phi(q,\xi)\) be proper, closed, and strongly convex in \(q\) for each
  \(\xi\in\mathbb R^m\).  Suppose that \(\Phi(q,\xi)\) is differentiable in
  \(\xi\) for each fixed \(q\), and let
  \[
      q(\xi)=\arg\min_q \Phi(q,\xi),
      \qquad
      \varphi(\xi)=\min_q \Phi(q,\xi).
  \]
  Then we have
  \[
      \nabla \varphi(\xi)
      =
      \nabla_\xi \Phi(q(\xi),\xi).
  \]
  This is the differentiable envelope rule for a value function with a unique
  minimizer.
  \end{proposition}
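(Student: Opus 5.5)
The claim is a Danskin-type statement, and the plan is to show directly that \(\varphi\) is differentiable at a fixed \(\xi_0\) with gradient \(g:=\nabla_\xi\Phi(q_0,\xi_0)\), where \(q_0:=q(\xi_0)\). The proof squeezes \(\varphi(\xi)-\varphi(\xi_0)\) between two first-order expansions.

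\emph{Upper bound.} Since \(q_0\) is feasible for every \(\xi\),
\[
\varphi(\xi)-\varphi(\xi_0)\le \Phi(q_0,\xi)-\Phi(q_0,\xi_0)=\langle g,\xi-\xi_0\rangle+o(\|\xi-\xi_0\|),
\]
using only differentiability of \(\Phi(q_0,\cdot)\) at \(\xi_0\).

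\emph{Lower bound.} Symmetrically, with \(q_\xi:=q(\xi)\),
\[
\varphi(\xi)-\varphi(\xi_0)\ge \Phi(q_\xi,\xi)-\Phi(q_\xi,\xi_0).
\]
To turn this into \(\langle g,\xi-\xi_0\rangle-o(\|\xi-\xi_0\|)\) I need two regularity facts, which I will assume as the implicit standing hypotheses of the envelope rule. First, \(q(\xi)\to q_0\) as \(\xi\to\xi_0\). Second, \(\nabla_\xi\Phi\) is jointly continuous, so the first-order expansion in \(\xi\) is uniform for \(q\) near \(q_0\). With both in hand, the mean value theorem gives
\[
\Phi(q_\xi,\xi)-\Phi(q_\xi,\xi_0)=\langle\nabla_\xi\Phi(q_\xi,\tilde\xi),\xi-\xi_0\rangle
\]
for some \(\tilde\xi\) on the segment \([\xi_0,\xi]\), and \(\nabla_\xi\Phi(q_\xi,\tilde\xi)\to g\).

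\emph{Main obstacle: continuity of the argmin.} I would obtain \(q(\xi)\to q_0\) from strong convexity. Let \(\mu\) be a uniform modulus near \(\xi_0\). Since \(q_\xi\) minimizes \(\Phi(\cdot,\xi)\), strong convexity gives
\[
\tfrac{\mu}{2}\|q_0-q_\xi\|^2\le \Phi(q_0,\xi)-\Phi(q_\xi,\xi).
\]
Applying the same inequality to \(\Phi(\cdot,\xi_0)\), whose minimizer is \(q_0\), and adding the two bounds yields
\[
\mu\|q_\xi-q_0\|^2\le [\Phi(q_0,\xi)-\Phi(q_0,\xi_0)]-[\Phi(q_\xi,\xi)-\Phi(q_\xi,\xi_0)].
\]
The right-hand side is \(O(\|\xi-\xi_0\|)\) if \(\nabla_\xi\Phi\) is locally bounded, and this gives the continuity (indeed Hölder-\(\tfrac12\)).

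In the paper's applications \(\Phi\) is a quadratic-plus-\(h\) function whose \(\xi\)-dependence is affine or quadratic and smooth, with \(h\) independent of \(\xi\). There these hypotheses hold trivially: the \(h\) terms cancel in every difference above, so the estimates involve only smooth terms. I would state the result under these mild extra assumptions. Finally, combining the upper and lower bounds gives \(\varphi(\xi)-\varphi(\xi_0)=\langle g,\xi-\xi_0\rangle+o(\|\xi-\xi_0\|)\), which is the claim.
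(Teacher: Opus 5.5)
The paper gives no proof of this proposition. It recalls the envelope rule as a standard convex-analysis fact, under exactly the printed hypotheses. Your Danskin-type squeeze is the standard proof: an upper bound from feasibility of \(q_0\) at \(\xi\), and a lower bound from feasibility of \(q_\xi\) at \(\xi_0\) plus the mean value theorem. It is correct under the extra hypotheses you impose.

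Those hypotheses are not cosmetic, because the proposition as printed is false. Take \(m=1\), \(q\in\mathbb R\), and \(\Psi(t)=\tfrac12t^2-\tfrac{t}{2(1+t^2)}\). Set \(\Phi(q,0)=\tfrac12q^2\) and, for \(\xi\neq0\), \(\Phi(q,\xi)=|\xi|\,\Psi\bigl(q/\sqrt{|\xi|}\bigr)=\tfrac12q^2-\tfrac{q|\xi|^{3/2}}{2(q^2+|\xi|)}\).

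\textbf{The hypotheses hold.} We have \(\partial_q^2\Phi(q,\xi)=\Psi''(q/\sqrt{|\xi|})\ge1-(24-16\sqrt{2})^{-1}>\tfrac14\), so every \(\Phi(\cdot,\xi)\) is strongly convex with a uniform modulus. Also \(\Phi(0,\cdot)\equiv0\), and for fixed \(q\neq0\) the perturbation of \(\tfrac12q^2\) is \(O(|\xi|^{3/2})\) (and smooth for \(\xi\neq0\)). Hence every \(\Phi(q,\cdot)\) is differentiable.

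\textbf{The conclusion fails.} \(\varphi(\xi)=|\xi|\min\Psi=-\kappa|\xi|\) with \(\kappa>0\) (already \(\Psi(0.4)<0\)), which is not differentiable at \(0\). Yet \(\partial_\xi\Phi(q(0),0)=0\).

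\textbf{What breaks.} Here \(q(\xi)=t^\ast\sqrt{|\xi|}\to0\), with \(t^\ast\) the minimizer of \(\Psi\); this is exactly your H\"older-\(\tfrac12\) rate. \(\partial_\xi\Phi\) is even bounded. What fails is joint continuity of \(\partial_\xi\Phi\) at \((0,0)\): along \(q=t^\ast\sqrt{|\xi|}\) it equals \(-\kappa\operatorname{sign}\xi\). So your joint-continuity assumption is essential, not a convenience.

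One step needs repair. In your argmin-continuity estimate, bounding \(\Phi(q_\xi,\xi)-\Phi(q_\xi,\xi_0)\) by \(O(\|\xi-\xi_0\|)\) needs a bound on \(\nabla_\xi\Phi(q_\xi,\cdot)\). You need it before you know that \(q_\xi\) is close to \(q_0\), so invoking local boundedness near \((q_0,\xi_0)\) is circular. There are two fixes.
\begin{itemize}
\item Assume a growth bound \(\|\nabla_\xi\Phi(q,\xi)\|\le C(1+\|q\|)\) for \(\xi\) near \(\xi_0\). This turns your inequality into \(\mu\|q_\xi-q_0\|^2\le C'(1+\|q_\xi-q_0\|)\|\xi-\xi_0\|\), which still forces \(q_\xi\to q_0\).
\item Argue by convexity. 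Joint continuity gives \(\Phi(\cdot,\xi)\to\Phi(\cdot,\xi_0)\) uniformly on a small sphere of radius \(r\) around \(q_0\). On that sphere \(\Phi(\cdot,\xi_0)\) exceeds its minimum by at least \(\mu r^2/2\). A convex function whose values on that sphere exceed its value at the center attains its minimum inside the ball.
\end{itemize}

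In the paper's only use of the rule, \(\Phi=\mathcal L_x\) with parameter \((t,\xi)\), all of this holds.
\begin{itemize}
\item \(h\) cancels in every difference, as you note.
\item The \((t,\xi)\)-gradient of \(\mathcal L_x\), which equals \((r_2,r_1)\) at \(q=q(t,\xi)\), is affine in \((q,t,\xi)\).
\item The modulus is \(\lambda_{\min}(D^{-2})\) for all \((t,\xi)\).
\item \(q(t,\xi)\) is even Lipschitz, by nonexpansiveness of \(\prox_{\eta h}^{D^{-2}}\), as in Lemma~\ref{lem:scalar-localization}.
\end{itemize}
Your strengthened version therefore covers everything the paper actually needs.
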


  \begin{lemma}[Monotonicity of saddle gradients]
  \label{lem:saddle-monotone}
  Let \(\ell(t,\xi)\) be convex in \(t\in\mathbb R\) and concave in
  \(\xi\in\mathbb R\), and suppose that the partial derivatives used below
  exist.  Then
  \[
      \mathcal T(t,\xi)
      =
      \bigl(\partial_t\ell(t,\xi),-\partial_\xi\ell(t,\xi)\bigr)
  \]
  is monotone on \(\mathbb R^2\).
  \end{lemma}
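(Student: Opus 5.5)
To show that \(\mathcal T\) is monotone on \(\mathbb R^2\), I will verify that for all \(z_1=(t_1,\xi_1)\) and \(z_2=(t_2,\xi_2)\),
\[
\langle \mathcal T(z_1)-\mathcal T(z_2),z_1-z_2\rangle\ge 0 .
\]
The plan is to split this inner product into the \(t\)-part and the \(\xi\)-part. I then bound each part using only the first-order (gradient) inequalities for convex and concave functions of one variable. Concretely, the quantity to control is
\[
\bigl(\partial_t\ell(t_1,\xi_1)-\partial_t\ell(t_2,\xi_2)\bigr)(t_1-t_2)
-\bigl(\partial_\xi\ell(t_1,\xi_1)-\partial_\xi\ell(t_2,\xi_2)\bigr)(\xi_1-\xi_2).
\]

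\textbf{Step 1: four gradient inequalities.} I write down the following, evaluated at the "cross" points.
\begin{itemize}
\item Convexity in \(t\) at fixed \(\xi_1\) gives \(\ell(t_2,\xi_1)\ge \ell(t_1,\xi_1)+\partial_t\ell(t_1,\xi_1)(t_2-t_1)\).
\item Convexity in \(t\) at fixed \(\xi_2\) gives \(\ell(t_1,\xi_2)\ge \ell(t_2,\xi_2)+\partial_t\ell(t_2,\xi_2)(t_1-t_2)\).
\item Concavity in \(\xi\) at fixed \(t_1\) gives \(\ell(t_1,\xi_2)\le \ell(t_1,\xi_1)+\partial_\xi\ell(t_1,\xi_1)(\xi_2-\xi_1)\).
\item Concavity in \(\xi\) at fixed \(t_2\) gives \(\ell(t_2,\xi_1)\le \ell(t_2,\xi_2)+\partial_\xi\ell(t_2,\xi_2)(\xi_1-\xi_2)\).
\end{itemize}

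\textbf{Step 2: combine.} I add the two convexity inequalities, and subtract the two concavity inequalities after rewriting them in "\(\ge\)" form. All function values cancel: the terms \(\ell(t_2,\xi_1)\) and \(\ell(t_1,\xi_2)\) appear once on each side, and so do \(\ell(t_1,\xi_1)\) and \(\ell(t_2,\xi_2)\). What remains is \(0\le\) the displayed expression above, which is the desired monotonicity.

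\textbf{Main obstacle.} The main care point is sign bookkeeping in this cancellation. I will check it explicitly by moving every term to one side. A cleaner alternative, if that is messy, is to argue through the two intermediate points \((t_1,\xi_2)\) and \((t_2,\xi_1)\) as a telescoping sum, exactly as in the standard proof that the saddle operator of a convex–concave function is monotone.

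The hypothesis "the partial derivatives used below exist" is exactly what is needed. Only the partial derivatives at \(z_1\) and \(z_2\) enter, and the one-dimensional gradient inequality holds for convex functions of one variable that are differentiable at the base point. No joint differentiability or continuity is required.
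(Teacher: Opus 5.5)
Your proof is correct and is essentially the paper's argument. The paper groups your two convexity inequalities into a single lower bound on \((\partial_t\ell(t_1,\xi_1)-\partial_t\ell(t_2,\xi_2))(t_1-t_2)\), and your two concavity inequalities into a single lower bound on \(-(\partial_\xi\ell(t_1,\xi_1)-\partial_\xi\ell(t_2,\xi_2))(\xi_1-\xi_2)\), then adds them so the four values of \(\ell\) cancel. One wording fix in Step~2: once the concavity inequalities are in \(\ge\) form they must be \emph{added} (equivalently, subtracted in their original \(\le\) form), and that is what gives the cancellation you describe.
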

  \begin{proof}
  Let \(g_{t,i}=\partial_t\ell(t_i,\xi_i)\) and
  \(g_{\xi,i}=\partial_\xi\ell(t_i,\xi_i)\).  Convexity in \(t\)
  gives
  \[
      (g_{t,1}-g_{t,2})(t_1-t_2)
      \ge
      \ell(t_1,\xi_1)-\ell(t_2,\xi_1)
      +
      \ell(t_2,\xi_2)-\ell(t_1,\xi_2).
  \]
  Concavity in \(\xi\) gives
  \[
      -(g_{\xi,1}-g_{\xi,2})(\xi_1-\xi_2)
      \ge
      \ell(t_1,\xi_2)-\ell(t_1,\xi_1)
      +
      \ell(t_2,\xi_1)-\ell(t_2,\xi_2).
  \]
  Adding the two inequalities yields
  \[
      \left\langle
      \mathcal T(t_1,\xi_1)-\mathcal T(t_2,\xi_2),
      (t_1-t_2,\xi_1-\xi_2)
      \right\rangle\ge0 .
  \]
  \end{proof}
  
\section{Factorization and Bilevel Search}
\label{sec:factorization-search}

In this section, we develop a technique for computing the metric proximal
mapping \(\prox_{\eta h}^{B}\) (with stepsize \(\eta>0\)).   First, we assume that the inverse metric
admits a nonsingular factorization \(B^{-1}=XX^\top\) with
\begin{equation}\label{eq:factorization}
X = D + u v^\top,\qquad D=\operatorname{diag}(d_i)\succ0,\; u,v\in\mathbb{R}^d.
\end{equation}

Then \(\prox_{\eta h}^{B}\) can be expressed in terms of a standard proximal operator composed with \(X\):
\[
\prox_{\eta h}^{B} = X \circ \prox_{\eta h\circ X} \circ X^{-1},
\]
where \((h\circ X)(z)=h(Xz)\).  Indeed, let \(p = X^{-1}x\) and change variable \(y = X z\), we have
\[
\begin{aligned}
\prox_{\eta h}^{B}(x)
&= \arg\min_{y}\Bigl\{\frac12\|y-x\|_B^2 + \eta h(y)\Bigr\} \\
&= X\circ\arg\min_{z}\Bigl\{\frac12\|z-p\|^2 + \eta h(Xz)\Bigr\} \\
&= X\circ\prox_{\eta h\circ X}(X^{-1}x).
\end{aligned}
\]
Thus the computation of a metric proximal step reduces to that of
\(\prox_{\eta h\circ X}( \cdot )\), where \(X\) is a rank-one perturbation of a
diagonal matrix.

\subsection{Two-dimensional monotone search}
\label{sub:monotone-search}
In this section, we describe how to compute \(\prox_{\eta h\circ X}\) for
\(X = D + u v^\top\).  If \(v=0\), then \(X=D\), and the computation reduces
to a diagonal-metric proximal evaluation.  In the following computation we
assume \(v\neq0\).  Define
\[
\Psi_x(z) = \frac12\|z-x\|^2 + \eta h(Dz + u\,v^\top z).
\]
Because \(X\) is nonsingular and \(h\) is proper, \(\Psi_x\) has nonempty
domain.  The quadratic term makes \(\Psi_x\) strongly convex and coercive, so
\(\prox_{\eta h\circ X}(x)=\arg\min_z\Psi_x(z)\) exists and is unique.

The rank-one factorization introduces two scalar variables.  Let
\[
    t=v^\top z,\qquad q=Dz+ut,
    \qquad c=1+\langle D^{-1}v,u\rangle .
\]
Since \(X\) is nonsingular, \(c\neq0\).  Conversely,
\[
    z=D^{-1}(q-ut),
\]
and the identity \(t=v^\top z\) is equivalent to the single linear constraint
\[
    \langle D^{-1}v,q\rangle=ct .
\]
Hence the scaled proximal subproblem is equivalent to
\begin{equation}\label{eq:qt-subproblem}
    \min_{q,t}\;
    \frac12\|D^{-1}(q-ut)-x\|^2+\eta h(q)
    \quad
    \text{subject to}\quad
    \langle D^{-1}v,q\rangle=ct .
\end{equation}

We now derive the optimality system for~\eqref{eq:qt-subproblem}.  Since
\(c\neq0\), for every \(q\) there exists a scalar \(t\) such that
\(\langle D^{-1}v,q\rangle=ct\).  Hence
\[
    \{q:\exists t,\ \langle D^{-1}v,q\rangle=ct\}
    \cap \operatorname{ri}(\operatorname{dom}h)\neq\emptyset
\]
Therefore the standard affine-constraint qualification holds automatically in
these factorized coordinates, and the KKT conditions below are necessary and
sufficient. Introduce a Lagrange multiplier \(\xi\in\mathbb R\) and define
\[
\mathcal L_x(q,t,\xi)
=
\frac12\|D^{-1}(q-ut)-x\|^2+\eta h(q)
+\xi\bigl(\langle D^{-1}v,q\rangle-ct\bigr).
\]
For fixed \((t,\xi)\), the \(q\)-subproblem is strongly convex.  Its
first-order condition is
\[
0\in
D^{-1}\bigl(D^{-1}(q-ut)-x\bigr)
+\xi D^{-1}v+\eta\partial h(q),
\]
or, equivalently,
\begin{equation}\label{eq:q-stationarity}
    D^{-2}\bigl(Dx+tu-\xi Dv-q\bigr)
    \in \eta\partial h(q).
\end{equation}
Let \(a(t,\xi)=Dx+tu-\xi Dv\), by the metric proximal optimality condition in
Corollary~\ref{cor:prox-opt}, with \(P=D^{-2}\), condition
\eqref{eq:q-stationarity} is equivalent to
\begin{equation}\label{eq:qtxi-oracle}
   q(t,\xi)=\prox_{\eta h}^{D^{-2}}\bigl(a(t,\xi)\bigr),
\end{equation}
so \(q(t,\xi)\) is the unique minimizer of
\(\mathcal L_x(\cdot,t,\xi)\).  We then set
\(z(t,\xi)=D^{-1}\bigl(q(t,\xi)-ut\bigr)\).
Since
\[
    \partial_t
    \frac12\|D^{-1}(q-ut)-x\|^2
    =
    \langle D^{-1}u,x-D^{-1}(q-ut)\rangle,
\]
the remaining KKT equations can be written as the residual system
\begin{equation}\label{eq:two-dim-residual}
\begin{aligned}
    r_1(t,\xi)
    &=
    \langle D^{-1}v,q(t,\xi)\rangle-ct,\\
    r_2(t,\xi)
    &=
    \langle D^{-1}u,x-z(t,\xi)\rangle-c\xi .
\end{aligned}
\end{equation}
The first residual enforces the rank-one consistency constraint, and the second
residual is the stationarity condition in the scalar variable \(t\).
Moreover, zeros of the residual system are exactly the scaled proximal points:
\[
    r_1(t^*,\xi^*)=r_2(t^*,\xi^*)=0
    \quad\Longleftrightarrow\quad
    z(t^*,\xi^*)=\prox_{\eta h\circ X}(x).
\]
Indeed, \eqref{eq:qtxi-oracle} gives \(q\)-stationarity, while \(r_1=0\) and
\(r_2=0\) give primal feasibility and \(t\)-stationarity.  By the
constraint qualification above, these are precisely the KKT conditions for
\eqref{eq:qt-subproblem}.  Conversely, if
\(z^*=\prox_{\eta h\circ X}(x)\), then
\[
    t^*=v^\top z^*,
    \qquad
    q^*=Xz^*,
\]
and the KKT theorem gives a multiplier \(\xi^*\), so
\((t^*,\xi^*)\) is a zero of~\eqref{eq:two-dim-residual}.

\medskip \noindent\textbf{Monotonicity.}
The residual system has the structure needed for monotone localization: it is
two-dimensional, monotone, and its oracle evaluation requires one
diagonal-metric proximal map.  Define
\[
    \mathcal T(t,\xi)=\bigl(r_2(t,\xi),-r_1(t,\xi)\bigr).
\]
Let
\[
\ell_x(t,\xi)
=
\min_q
\mathcal L_x(q,t,\xi).
\]
The function \(\ell_x\) is convex in \(t\) and concave in \(\xi\): for fixed
\(\xi\), convexity follows by partial minimization of a jointly convex
function in \((q,t)\), while for fixed \(t\), concavity follows because
\(\ell_x\) is the pointwise infimum of affine functions of \(\xi\).  For
each fixed \((t,\xi)\), the dependence of \(\mathcal L_x\) on \(q\)
contains the strongly convex quadratic term
\(\frac12\|D^{-1}q-D^{-1}ut-x\|^2\), whose Hessian is \(D^{-2}\succ0\).  Hence
the minimizing \(q\) is unique and is precisely \(q(t,\xi)\) from
\eqref{eq:qtxi-oracle}.  The envelope rule in
Proposition~\ref{prop:envelope} gives
\[
    \partial_t\ell_x(t,\xi)=r_2(t,\xi),
    \qquad
    \partial_\xi\ell_x(t,\xi)=r_1(t,\xi).
\]
Therefore
\[
    \mathcal T
    =
    (\partial_t\ell_x,-\partial_\xi\ell_x),
\]
and Lemma~\ref{lem:saddle-monotone} implies that \(\mathcal T\) is monotone:
\begin{equation}\label{eq:monotone-T}
    \left\langle
    \mathcal T(t_1,\xi_1)-\mathcal T(t_2,\xi_2),
    (t_1-t_2,\xi_1-\xi_2)
    \right\rangle\ge0 .
\end{equation}
If \((t^*,\xi^*)\) is a zero of \(\mathcal T\), then every trial point
\((t,\xi)\) generates a valid separating halfspace,
\[
    \left\langle
    \mathcal T(t,\xi),
    (t^*,\xi^*)-(t,\xi)
    \right\rangle\le0 .
\]
Thus a two-dimensional cutting-plane localization routine can use these cuts to
search for a zero of \(\mathcal T\) using only evaluations of the residual
oracle.

\begin{algorithm}[t]
\caption{Two-dimensional monotone search for
\(\prox_{\eta h\circ(D+uv^\top)}(x)\)}
\label{alg:monotone-prox}
\begin{algorithmic}[1]
\Require \(x,\eta,D,u,v\), an initial rectangle
\(\mathcal B_0\) from Lemma~\ref{lem:search-rectangle}, a localization
tolerance \(\delta_{\rm loc}>0\), and a residual tolerance
\(\epsilon_{\rm res}>0\).
\State Initialize an ellipse \(\mathcal E_0\supseteq\mathcal B_0\).
\For{\(j=0,1,\ldots\)}
    \State Let \((t_j,\xi_j)\) be the center of \(\mathcal E_j\).
    \State Compute
    \(q_j=\prox_{\eta h}^{D^{-2}}(Dx+t_j u-\xi_j Dv)\) and
    \(z_j=D^{-1}(q_j-u t_j)\).
    \State Compute \(r_{1,j},r_{2,j}\) from~\eqref{eq:two-dim-residual} and set
    \(\mathcal T_j=(r_{2,j},-r_{1,j})\).
    \If{\(\|(r_{1,j},r_{2,j})\|\le\epsilon_{\rm res}\) or
    \(\operatorname{diam}(\mathcal E_j)\le\delta_{\rm loc}\)}
        \State \Return \(z_j\).
    \EndIf
    \State Set
    \[
        \mathcal H_j
        =
        \left\{
        (t,\xi):
        \left\langle
        \mathcal T_j,(t,\xi)-(t_j,\xi_j)
        \right\rangle\le0
        \right\}.
    \]
    \State Update \(\mathcal E_{j+1}\supseteq\mathcal E_j\cap\mathcal H_j\)
    using a two-dimensional localization routine (e.g., an enclosing-ellipse
    cutting-plane update).
\EndFor
\end{algorithmic}
\end{algorithm}

For reproducibility, a standard central-cut ellipsoid update can be used for the
enclosing-ellipse step.  If
\(\mathcal E_j=\{w:(w-c_j)^\top A_j^{-1}(w-c_j)\le1\}\) in \(\mathbb R^2\) and
the cut is \(a_j^\top(w-c_j)\le0\), set
\[
    p_j=\frac{A_j a_j}{\sqrt{a_j^\top A_j a_j}},\qquad
    c_{j+1}=c_j-\frac13p_j,\qquad
    A_{j+1}=\frac43\left(A_j-\frac23p_jp_j^\top\right).
\]
Then \(\mathcal E_{j+1}\) contains \(\mathcal E_j\cap\mathcal H_j\).  Other
valid two-dimensional cutting-plane localization updates can be substituted.

\medskip\noindent\textbf{Basic residual properties.}

\begin{lemma}[Uniqueness of the residual zero]\label{lem:residual-unique}
The residual system~\eqref{eq:two-dim-residual} has a unique zero
\((t^*,\xi^*)\), given by \(t^*=v^\top z^*\) and
\(\xi^*=\langle D^{-1}u,x-z^*\rangle/c\), where
\(z^*=\prox_{\eta h\circ X}(x)\).
\end{lemma}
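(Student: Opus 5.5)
Existence of a zero is already available from the discussion preceding the lemma: with \(z^*=\prox_{\eta h\circ X}(x)\), \(t^*=v^\top z^*\) and \(q^*=Xz^*\), the KKT theorem (valid by the affine constraint qualification) provides a multiplier \(\xi^*\) making \((t^*,\xi^*)\) a zero of~\eqref{eq:two-dim-residual}. The plan is therefore to prove uniqueness: any zero \((\hat t,\hat\xi)\) must coincide with this one. Uniqueness of \(\xi\) is the main point, since KKT multipliers are not unique in general. Here the explicit residual \(r_2\) will pin \(\xi\) down.

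Let \((\hat t,\hat\xi)\) be any zero. By the equivalence stated before the lemma, \(z(\hat t,\hat\xi)=\prox_{\eta h\circ X}(x)=z^*\), which is unique by strong convexity of \(\Psi_x\). Next I recover \(\hat t\). Since \(q(\hat t,\hat\xi)=Dz^*+u\hat t\), the condition \(r_1=0\) gives
\[
\langle D^{-1}v,Dz^*+u\hat t\rangle=c\hat t .
\]
This reads \(v^\top z^*+(c-1)\hat t=c\hat t\), hence \(\hat t=v^\top z^*=t^*\). Then \(r_2(\hat t,\hat\xi)=0\) reads \(\langle D^{-1}u,x-z^*\rangle=c\hat\xi\). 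Since \(c\neq0\) (nonsingularity of \(X\)), this forces \(\hat\xi=\langle D^{-1}u,x-z^*\rangle/c\).

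The expressions for \(\hat t\) and \(\hat\xi\) depend only on \(z^*\), so every zero equals the one displayed in the statement, which proves uniqueness. Applying the same two computations to the existing zero \((t^*,\xi^*)\) confirms the displayed formula for \(\xi^*\). The only delicate step is the implication from a residual zero to \(z=z^*\). I will rely on the previously established equivalence: \(q\)-stationarity from~\eqref{eq:qtxi-oracle}, together with \(r_1=r_2=0\), gives the full KKT conditions, which are sufficient for~\eqref{eq:qt-subproblem}. The equivalence between \(z\) and \((q,t)\) then identifies \(z\) with the unique minimizer of \(\Psi_x\).
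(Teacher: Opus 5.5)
Your proposal is correct and follows essentially the same route as the paper. A zero forces \(z(t,\xi)=z^*\) via the KKT equivalence, then \(r_1=0\) (equivalently \(t=v^\top z\)) gives \(t=v^\top z^*\), and \(r_2=0\) together with \(c\neq0\) determines \(\xi\). Your explicit computation of \(\hat t\) from \(r_1=0\), and your appeal to the KKT multiplier argument for existence, spell out steps that the paper leaves implicit.
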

\begin{proof}
The scaled proximal point \(z^*=\arg\min_z\Psi_x(z)\) is unique because
\(\Psi_x\) is strongly convex.  By the equivalence
of~\eqref{eq:qt-subproblem}--\eqref{eq:two-dim-residual}, every zero
\((t,\xi)\) of~\eqref{eq:two-dim-residual} yields \(z(t,\xi)=z^*\).  Since
\(t=v^\top z(t,\xi)\) at a zero, \(t=v^\top z^*=:t^*\) for every zero.
Substituting \(z(t^*,\xi)=z^*\) into \(r_2(t^*,\xi)=0\) gives
\(\langle D^{-1}u,x-z^*\rangle-c\xi=0\).  Because \(X\) is nonsingular,
\(c=1+\langle D^{-1}v,u\rangle\neq0\), so \(\xi\) is unique.
\end{proof}

\begin{lemma}[Scalar localization controls the proximal point]
\label{lem:scalar-localization}
Let \((t^*,\xi^*)\) be the zero from Lemma~\ref{lem:residual-unique}.  Then
for all \((t,\xi)\),
\[
    \|z(t,\xi)-z(t^*,\xi^*)\|
    \le C_X\|(t,\xi)-(t^*,\xi^*)\|,
    \qquad
    C_X=\sqrt{4\|D^{-1}u\|^2+\|v\|^2}.
\]
\end{lemma}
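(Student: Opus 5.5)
The plan is to write both points through the oracle formula \eqref{eq:qtxi-oracle} and then use firm nonexpansiveness of the diagonal-metric proximal map. Fix \((t,\xi)\) and let \((t^*,\xi^*)\) be the zero from Lemma~\ref{lem:residual-unique}. Write \(\Delta t=t-t^*\) and \(\Delta\xi=\xi-\xi^*\). Set \(q=q(t,\xi)\) and \(q^*=q(t^*,\xi^*)\), with arguments \(a=a(t,\xi)\) and \(a^*=a(t^*,\xi^*)\). Since \(a(t,\xi)=Dx+tu-\xi Dv\), the argument difference is affine:
\[
    a-a^*=\Delta t\,u-\Delta\xi\,Dv,
    \qquad
    D^{-1}(a-a^*)=\Delta t\,D^{-1}u-\Delta\xi\,v .
\]

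The first step bounds the \(q\)-difference in the metric \(P=D^{-2}\). Apply Corollary~\ref{cor:prox-firm} with this \(P\) and \(\sigma=\eta\) to \(q=\prox_{\eta h}^{D^{-2}}(a)\) and \(q^*=\prox_{\eta h}^{D^{-2}}(a^*)\). This gives
\[
\|D^{-1}(q-q^*)\|^2\le\langle D^{-1}(q-q^*),D^{-1}(a-a^*)\rangle .
\]
Cauchy--Schwarz then yields \(\|D^{-1}(q-q^*)\|\le\|D^{-1}(a-a^*)\|\), and the triangle inequality gives
\[
\|D^{-1}(q-q^*)\|\le|\Delta t|\,\|D^{-1}u\|+|\Delta\xi|\,\|v\| .
\]
The point of working in the \(D^{-2}\)-metric is that the \(D^{-1}\) factors land exactly where \(z\) needs them, so no conditioning of \(D\) appears in the constant.

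The second step passes to \(z\). By definition,
\[
z(t,\xi)-z(t^*,\xi^*)=D^{-1}(q-q^*)-\Delta t\,D^{-1}u .
\]
By the triangle inequality and the previous bound,
\[
\|z(t,\xi)-z(t^*,\xi^*)\|\le 2|\Delta t|\,\|D^{-1}u\|+|\Delta\xi|\,\|v\| .
\]
Cauchy--Schwarz in \(\mathbb R^2\), applied to the vectors \((2\|D^{-1}u\|,\|v\|)\) and \((|\Delta t|,|\Delta\xi|)\), bounds the right-hand side by \(\sqrt{4\|D^{-1}u\|^2+\|v\|^2}\,\|(\Delta t,\Delta\xi)\|=C_X\|(t,\xi)-(t^*,\xi^*)\|\), which is the claim.

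There is no genuine obstacle here; the only point needing care is the first step. One must use the firm nonexpansiveness of the \(D^{-2}\)-metric proximal map in that same metric, rather than Euclidean nonexpansiveness of \(q\), so that the constant involves \(\|D^{-1}u\|\) and \(\|v\|\) and not \(\|u\|\) and \(\|Dv\|\) weighted by the condition number of \(D\). The argument never uses that \((t^*,\xi^*)\) is a zero: it shows \(z\) is \(C_X\)-Lipschitz on all of \(\mathbb R^2\), and the stated inequality is the special case with one point fixed at the zero.
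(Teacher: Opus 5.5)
Your proof is correct and follows essentially the same route as the paper: nonexpansiveness of $\prox_{\eta h}^{D^{-2}}$ in the $D^{-2}$-metric bounds $\|D^{-1}(q-q^*)\|$ by $\|\Delta t\,D^{-1}u-\Delta\xi\,v\|$. You derive that nonexpansiveness from Corollary~\ref{cor:prox-firm} plus Cauchy--Schwarz, and then the identity $z-z^*=D^{-1}(q-q^*)-\Delta t\,D^{-1}u$, the triangle inequality, and Cauchy--Schwarz in $\mathbb R^2$ give $C_X$, exactly as in the paper (your remark that this is in fact a global Lipschitz bound for $z(\cdot,\cdot)$ is also correct).
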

\begin{proof}
Set \(\Delta t=t-t^*\) and \(\Delta\xi=\xi-\xi^*\).  Since
\(\prox_{\eta h}^{D^{-2}}\) is nonexpansive in the \(D^{-2}\)-metric,
\[
    \|D^{-1}(q(t,\xi)-q(t^*,\xi^*))\|
    \le
    \|D^{-1}u\,\Delta t-v\,\Delta\xi\|.
\]
Consequently, with \(z(t,\xi)=D^{-1}(q(t,\xi)-ut)\),
\begin{equation}\label{eq:z-lipschitz}
\begin{aligned}
    \|z(t,\xi)-z(t^*,\xi^*)\|
    &\le
    2\|D^{-1}u\|\,|\Delta t|
    +\|v\|\,|\Delta\xi|  \\
    &\le C_X\|(t,\xi)-(t^*,\xi^*)\|.
\end{aligned}
\end{equation}

\end{proof}

Thus reducing the scalar accuracy to \(\epsilon_z/C_X\) gives an
\(O(\epsilon_z)\)-accurate scaled proximal point.

\medskip \noindent\textbf{Finite search rectangle.}
The following lemma provides explicit a priori bounds for the two scalar
unknowns in the monotone system.

\begin{lemma}[Explicit search rectangle]\label{lem:search-rectangle}
Assume that \(h\) has a known lower bound \(\underline h\) and
\(h(Xx)<\infty\).  Let
\((t^*,\xi^*)\) be the zero from Lemma~\ref{lem:residual-unique}.  Then
\[
    t^* \in [t_c - R_t,\; t_c + R_t],
    \qquad
    \xi^*\in[-R_\xi,\;R_\xi],
\]
where
\[
    t_c=\langle v,x\rangle,
    \qquad
    R_t=\|v\|\sqrt{2\eta (h(Xx)-\underline h)},
\]
and
\[
    R_\xi
    =
    \frac{\|D^{-1}u\|}{|c|}
    \sqrt{2\eta (h(Xx)-\underline h)} ,
    \qquad
    c=1+\langle D^{-1}v,u\rangle .
\]
\end{lemma}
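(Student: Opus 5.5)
The plan is to bound the displacement \(z^*-x\), where \(z^*=\prox_{\eta h\circ X}(x)\), and then read off both scalars from it.

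\emph{Bounding \(\|z^*-x\|\).} We compare \(\Psi_x(z^*)\) with \(\Psi_x(x)\). Optimality of \(z^*\) gives
\[
\tfrac12\|z^*-x\|^2+\eta h(Xz^*)\le \Psi_x(x)=\eta h(Xx).
\]
Using \(h(Xz^*)\ge\underline h\), this yields
\[
\|z^*-x\|\le\rho:=\sqrt{2\eta\,(h(Xx)-\underline h)} .
\]
The hypothesis \(h(Xx)<\infty\) is exactly what makes the right-hand side finite. The same bound also follows from strong convexity (\(\Psi_x(x)-\Psi_x(z^*)\ge\frac12\|x-z^*\|^2\)), but the direct comparison already suffices.

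\emph{The \(t\)-bound.} By Lemma~\ref{lem:residual-unique}, \(t^*=v^\top z^*\). With \(t_c=v^\top x\), Cauchy--Schwarz gives
\[
|t^*-t_c|=|\langle v,z^*-x\rangle|\le\|v\|\rho=R_t .
\]

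\emph{The \(\xi\)-bound.} Lemma~\ref{lem:residual-unique} also gives \(\xi^*=\langle D^{-1}u,x-z^*\rangle/c\), and \(c\neq0\) by nonsingularity of \(X\). Hence
\[
|\xi^*|\le \frac{\|D^{-1}u\|}{|c|}\,\|x-z^*\|\le \frac{\|D^{-1}u\|}{|c|}\,\rho=R_\xi .
\]

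There is no real obstacle here. The only delicate points are two checks on the identification from Lemma~\ref{lem:residual-unique}. First, \(z(t^*,\xi^*)=z^*\), so the \(z\) appearing in \(r_2\) at the zero is indeed the scaled proximal point. Second, the formula for \(\xi^*\) must be used with the correct sign convention. Both were already established in that lemma, so the proof reduces to the single function-value comparison followed by two applications of Cauchy--Schwarz.
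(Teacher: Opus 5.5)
Your proposal is correct and follows essentially the same argument as the paper. Both compare \(\Psi_x(z^*)\le\Psi_x(x)=\eta h(Xx)\) and use the lower bound \(\underline h\) to bound \(\|z^*-x\|\), then apply Cauchy--Schwarz to \(t^*-t_c=\langle v,z^*-x\rangle\) and to \(c\xi^*=\langle D^{-1}u,x-z^*\rangle\), the latter coming from \(r_2=0\).
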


\begin{proof}
Let \(z^*=z(t^*,\xi^*)\).  Since \(z^*\) minimizes \(\Psi_x\),
\[
    \Psi_x(z^*)\le \Psi_x(x)=\eta h(Xx).
\]
Because \(h(Xz^*)\ge\underline h\),
\[
    \|z^*-x\|\le \sqrt{2\eta (h(Xx)-\underline h)} .
\]
The first bound follows from
\[
    |t^*-\langle v,x\rangle|
    =
    |\langle v,z^*-x\rangle|
    \le
    \|v\|\|z^*-x\|.
\]
The second residual equation gives
\[
    c\xi^*
    =
    \langle D^{-1}u,x-z^*\rangle ,
\]
and therefore
\[
    |\xi^*|
    \le
    \frac{\|D^{-1}u\|}{|c|}
    \|z^*-x\|
    \le
    R_\xi .
\]
\end{proof}

\subsection{Certified bilevel solve and a safeguarded oracle}
\label{sub:bilevel-search}
Algorithm~\ref{alg:monotone-prox} is the practical localization phase of the
DBS oracle.  The certified solve below supplies the worst-case guarantee: given
finite brackets for \(t^*\) and for the inner \(\xi\)-solve, it returns an
\(\epsilon\)-accurate scaled proximal point using
\(\mathcal O(\log(R_t/\epsilon)\log(R_{\xi}/\epsilon))\)
residual evaluations.
The safeguarded oracle runs the localization phase first and calls this
certified solve only if the localization certificate is not obtained within the
prescribed budget.

\medskip\noindent\textbf{Certified bilevel solve.}
The certified solve uses two one-dimensional monotonicities inherited from the
saddle structure.  For fixed \(t\), \(r_1(t,\cdot)\) is nonincreasing, so a
sign bracket \(J_t=[\xi^-_t,\xi^+_t]\) with
\(r_1(t,\xi^-_t)\ge0\ge r_1(t,\xi^+_t)\) can be refined by bisection.  After
this inner elimination, the corresponding \(r_2\)-values form a nondecreasing
outer subgradient in \(t\), so a certified sign of \(r_2\) gives the outer
bisection direction.  Algorithm~\ref{alg:bilevel-certified} certifies this
sign using an interval enclosure
\(\mathcal I_2(t,J_t)\supset\{r_2(t,\xi):\xi\in J_t\}\); if the sign is not
separated and \(J_t\) is already \(O(\epsilon)\), the same enclosure gives an
\(O(\epsilon)\) residual certificate.
The norm \(\|(r_1,r_2)\|\) is also used as a computable reduced KKT residual:
when it is \(O(\epsilon)\), the residual-to-solution conversion in
Lemma~\ref{lem:bilevel-complexity} gives an \(O(\epsilon)\)-accurate scaled
proximal point.
The small-\(J_t\) branch also controls the first residual: by nonexpansiveness
of the diagonal-metric proximal map, for fixed \(t\),
\begin{equation}\label{eq:r1-xi-lipschitz}
\begin{aligned}
    |r_1(t,\xi)-r_1(t,\xi')|
    &\le
    \|v\|\,
    \|D^{-1}(q(t,\xi)-q(t,\xi'))\|  \\
    &\le \|v\|^2|\xi-\xi'|.
\end{aligned}
\end{equation}
Thus an \(O(\epsilon)\)-wide inner bracket gives
\(r_1(t,\hat\xi)=O(\epsilon)\) at its midpoint.  The second residual is
controlled separately by the interval enclosure \(\mathcal I_2(t,J_t)\).

\begin{algorithm}[t]
\caption{Certified bilevel solve for the rank-one scaled proximal step}
\label{alg:bilevel-certified}
\begin{algorithmic}[1]
\Require Outer bracket \([t^-,t^+]\) containing \(t^*\); inner center
\(\bar\xi(t)\) (e.g., \(0\)); initial inner radius \(\rho_0>0\); tolerance
\(\epsilon\).
\While{\(t^+-t^- > \epsilon\)}
    \State Set \(t=(t^-+t^+)/2\), \(\rho=\rho_0\), and
    \(J_t=[\bar\xi(t)-\rho,\bar\xi(t)+\rho]\).
    \While{\(r_1(t,\xi^-_t)<0\) or \(r_1(t,\xi^+_t)>0\)}
        \State Set \(\rho=2\rho\) and
        \(J_t=[\bar\xi(t)-\rho,\bar\xi(t)+\rho]\).
    \EndWhile
    \Loop
        \State Set \(\hat\xi=\operatorname{mid}(J_t)\), compute
        \(r_1(t,\hat\xi)\), \(r_2(t,\hat\xi)\), and form
        \(\mathcal I_2(t,J_t)\supset\{r_2(t,\xi):\xi\in J_t\}\).
        \If{\(\|(r_1(t,\hat\xi),r_2(t,\hat\xi))\|\le c_r\epsilon\), or
        \(0\notin\mathcal I_2(t,J_t)\), or \(|J_t|\le c_\xi\epsilon\)}
            \State \textbf{break}
        \ElsIf{\(r_1(t,\hat\xi)>0\)}
            \State Set \(\xi^-_t=\hat\xi\).
        \Else
            \State Set \(\xi^+_t=\hat\xi\).
        \EndIf
    \EndLoop
    \If{\(\|(r_1(t,\hat\xi),r_2(t,\hat\xi))\|\le c_r\epsilon\) for
    this \(\hat\xi\)}
        \State \Return \(z(t,\hat\xi)\).
    \ElsIf{\(\mathcal I_2(t,J_t)\subset(0,\infty)\)}
        \State \(t^+=t\).
    \ElsIf{\(\mathcal I_2(t,J_t)\subset(-\infty,0)\)}
        \State \(t^-=t\).
    \Else
        \State \Return \(z(t,\hat\xi)\) \Comment{\(|J_t|\le c_\xi\epsilon\)
        and \(0\in\mathcal I_2(t,J_t)\)}
    \EndIf
\EndWhile
\State Set \(t=\operatorname{mid}([t^-,t^+])\), obtain \(J_t\) by the same sign
expansion, refine it until \(|J_t|\le c_\xi\epsilon\), set
\(\hat\xi=\operatorname{mid}(J_t)\), and return \(z(t,\hat\xi)\).
\end{algorithmic}
\end{algorithm}

\begin{lemma}[Certified subproblem complexity]
\label{lem:bilevel-complexity}
Let the diagonal-metric proximal operator \(\prox_{\eta h}^{D^{-2}}\) be
computable in time \(T_p\).  Suppose Algorithm~\ref{alg:bilevel-certified} is
given an outer bracket of radius \(R_t\) containing \(t^*\), and suppose that,
for every \(t\) in this bracket, the sign expansion terminates with a valid
\(\xi\)-bracket of radius at most \(R_{\xi}\).  Then the algorithm
returns an \(\epsilon\)-accurate scaled proximal point using
\[
    \mathcal O\bigl(\log(R_t/\epsilon)\log(R_{\xi}/\epsilon)\bigr)
\]
residual-oracle evaluations, and therefore costs
\[
    \mathcal O\bigl((d+T_p)
    \log(R_t/\epsilon)\log(R_{\xi}/\epsilon)\bigr).
\]
For fixed problem data this is
\(\mathcal O((d+T_p)\log^2(1/\epsilon))\).  The outer \(t\)-bracket is
supplied by Lemma~\ref{lem:search-rectangle}.  For each trial
\(t\), a valid inner \(\xi\)-bracket is obtained either from a problem-specific
bound or by sign expansion.
\end{lemma}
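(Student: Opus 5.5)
The argument has two parts. First, count residual-oracle calls: the outer loop runs at most \(\lceil\log_2(2R_t/\epsilon)\rceil\) times, and each outer iteration makes \(\mathcal O(\log(R_\xi/\epsilon))\) calls. Second, show that whatever point is returned is \(\epsilon\)-accurate. This second part uses the saddle structure of \(\ell_x\), the uniqueness in Lemma~\ref{lem:residual-unique}, and the Lipschitz bound in Lemma~\ref{lem:scalar-localization}.

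\textbf{Oracle counting.} Each residual evaluation consists of one call to \(\prox_{\eta h}^{D^{-2}}\), costing \(T_p\), plus \(\mathcal O(d)\) vector operations: forming \(a(t,\xi)\), \(z=D^{-1}(q-ut)\), and two inner products. So one evaluation costs \(\mathcal O(d+T_p)\).

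\emph{Inner sign expansion.} By hypothesis the expansion ends at a bracket of radius at most \(R_\xi\). Starting from \(\rho_0\), this takes \(\mathcal O(\log(R_\xi/\rho_0))\) doublings, which is absorbed into \(\mathcal O(\log(R_\xi/\epsilon))\) once \(\rho_0\) is treated as fixed data.

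\emph{Inner bisection.} Each bisection step halves \(J_t\) and keeps the sign bracket valid. This relies on the monotonicity of \(r_1(t,\cdot)\), which is nonincreasing because \(\ell_x\) is concave in \(\xi\) and \(\partial_\xi\ell_x=r_1\). After at most \(\log_2(2R_\xi/(c_\xi\epsilon))\) steps, the break condition \(|J_t|\le c_\xi\epsilon\) is triggered. Computing the enclosure \(\mathcal I_2\) is assumed to take \(\mathcal O(1)\) oracle calls; for example, it can be built from endpoint values together with a Lipschitz bound in \(\xi\) analogous to \eqref{eq:r1-xi-lipschitz}.

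\emph{Total.} Multiplying the outer count by the per-iteration inner count gives the product of logarithms. The final step after the loop adds one more inner solve.

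\textbf{Correctness of the outer bracket.} Define the reduced function \(\phi(t)=\max_\xi\ell_x(t,\xi)\). It is convex, and by the envelope argument its derivative is \(r_2(t,\xi(t))\), where \(\xi(t)\) is the root of \(r_1(t,\cdot)\). Since \(\xi(t)\in J_t\), the condition \(\mathcal I_2(t,J_t)\subset(0,\infty)\) certifies \(\phi'(t)>0\). By convexity and uniqueness of the zero, this gives \(t^*<t\), so the update \(t^+=t\) keeps \(t^*\) in the bracket. The negative case is symmetric. Hence the invariant \(t^*\in[t^-,t^+]\) is preserved throughout.

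\textbf{Accuracy at each exit.} There are three exits.
\begin{enumerate}
\item \emph{Outer loop ends with \(|t-t^*|\le\epsilon\).} The final inner solve gives \(|\hat\xi-\xi(t)|\le c_\xi\epsilon\). We also need \(|\xi(t)-\xi^*|=\mathcal O(\epsilon)\). To get it, I would show that \(\xi(\cdot)\) is Lipschitz, using strong concavity in \(\xi\) together with the Lipschitz dependence of \(r_1\) on \(t\). Lemma~\ref{lem:scalar-localization} then gives \(\|z-z^*\|\le C_X\cdot\mathcal O(\epsilon)\).
\item \emph{Residual exit, with \(\|(r_1,r_2)\|\le c_r\epsilon\).} Here we need a residual-to-solution bound. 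I would obtain it by applying firm nonexpansiveness (Corollary~\ref{cor:prox-firm}) to the perturbed KKT system. Concretely, a point with small residuals is the exact solution for a perturbed \(x\), and the constraint is shifted by \(\mathcal O(\epsilon)\). The map \(x\mapsto\prox_{\eta h\circ X}(x)\) is nonexpansive, so the error is \(\mathcal O(\epsilon)\).
\item \emph{Ambiguous exit, with \(|J_t|\le c_\xi\epsilon\) and \(0\in\mathcal I_2\).} Inequality \eqref{eq:r1-xi-lipschitz} gives \(r_1=\mathcal O(\epsilon)\), and the enclosure width gives \(r_2=\mathcal O(\epsilon)\), so this reduces to the residual exit.
\end{enumerate}

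\textbf{Main obstacle.} The hardest step is the residual-to-solution conversion. Monotonicity alone does not give error bounds, because \(\mathcal T\) need not be strongly monotone. I would therefore work in the original \(z\)-variables: interpret \(z(t,\hat\xi)\) as \(\prox_{\eta h\circ X}(x+e)\) plus a correction of size \(\mathcal O(\|r\|)\), with constants depending on \(\|D^{-1}u\|\), \(\|v\|\) and \(1/|c|\). These constants are absorbed into the \(\mathcal O\) for fixed problem data.
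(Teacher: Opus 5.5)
Your oracle count and your small-bracket exit agree with the paper. Exit~1, however, rests on a false claim: \(\ell_x(t,\cdot)\) is concave but in general not strongly concave. Since \(r_1(t,\xi)=v^\top z(t,\xi)-t\), it is constant on any \(\xi\)-interval where the diagonal prox is locally constant.

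Here is a concrete case. Take \(d=1\), \(D=1\), \(h=\lambda|\cdot|\), \(v>0\), \(c=1+uv>0\), and \(|x|<\eta\lambda c\). Then \(z^*=0\) and \(t^*=0\). Moreover, \(r_1(0,\cdot)\) vanishes on the whole interval \([(x-\eta\lambda)/v,(x+\eta\lambda)/v]\), whose interior contains \(\xi^*=ux/c\). As \(t\to0^\pm\), the unique root \(\xi(t)\) tends to the two different endpoints. So \(\xi(\cdot)\) jumps at \(t^*\), \(|\xi(t)-\xi^*|\) stays bounded away from zero, and Lemma~\ref{lem:scalar-localization} yields nothing. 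Yet \(z(t,\xi(t))=t/v\) is \(O(\epsilon)\)-close to \(z^*\).

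The same example shows your \(\phi\) can have a kink at \(t^*\). The bracket update should therefore use \(r_2(t,\tilde\xi)\in\partial\phi(t)\) for any root \(\tilde\xi\in J_t\) of \(r_1(t,\cdot)\). This gives \(t^*\le t\) rather than \(t^*<t\), which is enough for the invariant.

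The right mechanism is not closeness in \(\xi\). Any root \(\tilde\xi\) makes \(q(t,\tilde\xi)\) a feasible Lagrangian minimizer. Hence \(z(t,\tilde\xi)=z_t:=\arg\min\{\Psi_x(z):v^\top z=t\}\) whichever root is found, and \(\|z(t,\hat\xi)-z_t\|\le\|v\|\,|\hat\xi-\tilde\xi|\). What remains is \(\|z_t-z^*\|\). Strong monotonicity of \(\partial\Psi_x\) gives only \(\|z_t-z^*\|^2\le|\lambda_t|\,|t-t^*|\), where \(\lambda_t\) is a bounded multiplier for \(v^\top z=t\) obtained from \(\tilde\xi\). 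That is \(O(\sqrt{\epsilon})\) without further regularity, so you would have to run the outer bisection to tolerance \(\epsilon^2\); this still keeps the \(\log^2\) count.

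The paper never argues through \(\xi\)-localization here. It certifies accuracy only through the reduced KKT residual at the residual and small-bracket exits, and it does not separately treat the post-loop return.

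For the residual exit, which you flag as the main obstacle, the paper makes your ``perturbed input plus \(O(\|r\|)\) correction'' explicit. Set \(q=q(t,\hat\xi)\) and \(\bar z=X^{-1}q=z(t,\hat\xi)-r_1D^{-1}u/c\), and take \(s\in\partial h(q)\) from \eqref{eq:q-stationarity}. Expanding \(\eta X^\top s=\eta(D+vu^\top)s\) gives \(\eta X^\top s=x-z(t,\hat\xi)+r_2v\). Hence \(\bar z-x+\eta X^\top s=r_2v-(r_1/c)D^{-1}u\in\partial\Psi_x(\bar z)\). One-strong convexity of \(\Psi_x\) then gives \(\|\bar z-z^*\|\le\|e\|\), where \(e\) is this right-hand side. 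Equivalently, \(\bar z=\prox_{\eta h\circ X}(x+e)\) and nonexpansiveness applies. Finally, \(\|z(t,\hat\xi)-\bar z\|=|r_1|\,\|D^{-1}u\|/|c|\), which closes your Exits~2 and~3.
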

\begin{proof}
Each residual-oracle evaluation costs \(\mathcal O(d+T_p)\): one
diagonal-metric proximal evaluation and a constant number of inner products.
The sign expansion and the subsequent bisection on the nonincreasing residual
\(r_1(t,\cdot)\) use
\(\mathcal O(\log(R_{\xi}/\epsilon))\) evaluations.  The outer subgradient
bisection contracts its bracket geometrically and uses
\(\mathcal O(\log(R_t/\epsilon))\) certified decisions, each invoking one
inner solve with \(\epsilon_\xi=\Theta(\epsilon)\).  For fixed problem data,
this gives \(\mathcal O(\log^2(1/\epsilon))\) residual-oracle evaluations.
If Algorithm~\ref{alg:bilevel-certified} exits through the small-bracket
branch, then \(r_1(t,\hat\xi)=\mathcal O(\epsilon)\) by the inner sign bracket
and~\eqref{eq:r1-xi-lipschitz}, while
\(|r_2(t,\hat\xi)|=\mathcal O(\epsilon)\) by the enclosure diameter assumption.
Thus the reduced KKT residual is \(\mathcal O(\epsilon)\).  Indeed, with
\(q_\epsilon=q(t,\hat\xi)\), \(z_\epsilon=z(t,\hat\xi)\), and
\(\bar z_\epsilon=X^{-1}q_\epsilon
=z_\epsilon-r_1(t,\hat\xi)D^{-1}u/c\), the exact
\(q\)-stationarity condition gives some \(s_\epsilon\in\partial h(q_\epsilon)\)
such that
\[
    \bar z_\epsilon-x+\eta X^\top s_\epsilon
    =
    r_2(t,\hat\xi)v-\frac{r_1(t,\hat\xi)}{c}D^{-1}u
    \in \partial\Psi_x(\bar z_\epsilon).
\]
The right-hand side has norm \(\mathcal O(\epsilon)\).  Since
\(\Psi_x\) is 1-strongly convex, this subgradient residual gives
\(\|\bar z_\epsilon-z^*\|=\mathcal O(\epsilon)\), and
\(\|z_\epsilon-\bar z_\epsilon\|=\mathcal O(\epsilon)\) gives the same
localization for the returned point.
Multiplying the two logarithmic factors by the per-evaluation cost gives the
stated bound.
\end{proof}

The value of \(T_p\) in Lemma~\ref{lem:bilevel-complexity} depends on the
regularizer and on the diagonal scaling.  For separable regularizers it is
often linear in \(d\): if \(h(x)=\sum_i h_i(x_i)\), then
\[
    \bigl[\prox_{\eta h}^{D^{-2}}(a)\bigr]_i
    =
    \prox_{\eta d_i^2 h_i}(a_i).
\]
For block-separable group regularizers, the same conclusion holds when \(D\) is
constant on each group.  In the scalar case \(D=\alpha I\), the
diagonal-metric proximal operation reduces to an ordinary proximal evaluation
with a rescaled parameter, which is useful for structured nonseparable
penalties whose ordinary proximal mapping is available.

\medskip\noindent\textbf{Safeguarded oracle.}
The oracle used in the DBS iterations first runs the two-dimensional monotone
localization in Algorithm~\ref{alg:monotone-prox}.  If the residual test or
localization certificate is satisfied within a prescribed budget, the localized
point is accepted.  Otherwise the oracle calls the certified bilevel solve in
Algorithm~\ref{alg:bilevel-certified} with the same tolerance.  Thus the
localization phase is the default accelerator, while the bilevel phase supplies
the worst-case certificate.

\begin{proposition}[Safeguarded subproblem complexity]
\label{prop:safeguard}
Run the safeguarded oracle with \(\delta_{\rm loc}=\Theta(\epsilon)\) and
budget
\[
    N_{\rm sg}=\Theta\!\left(
    \log(R_t/\epsilon)\log(R_{\xi}/\epsilon)\right),
\]
and call Algorithm~\ref{alg:bilevel-certified} with the \(t\)-bracket from
Lemma~\ref{lem:search-rectangle} if the localization phase is not certified
within this budget.  Then the oracle returns an \(\epsilon\)-accurate scaled
proximal point using
\[
    \mathcal O\bigl((d+T_p)
    \log(R_t/\epsilon)\log(R_{\xi}/\epsilon)\bigr)
\]
work, hence \(\mathcal O((d+T_p)\log^2(1/\epsilon))\) for fixed problem data.
If the localization phase certifies diameter reduction,
\[
    \operatorname{diam}(\mathcal E_j)
    \le C_{\rm loc}R_0\exp(-c_{\rm loc}j),
\]
for constants \(c_{\rm loc},C_{\rm loc}>0\) with
\(R_0=\operatorname{diam}(\mathcal B_0)\),
then the accelerator phase reaches the localization tolerance before the
fallback is called, and the cost improves to
\(\mathcal O\bigl((d+T_p)\log(1/\epsilon)\bigr)\).
\end{proposition}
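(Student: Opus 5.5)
The proof splits into the two phases of the safeguarded oracle, and the cost is the sum of their costs. Throughout, each residual-oracle evaluation in either phase costs \(\mathcal O(d+T_p)\). In Algorithm~\ref{alg:monotone-prox} an evaluation is one diagonal-metric proximal map \(q_j=\prox_{\eta h}^{D^{-2}}(Dx+t_ju-\xi_jDv)\) plus a constant number of inner products and vector updates. The central-cut ellipse update in \(\mathbb R^2\) adds only \(\mathcal O(1)\) work.

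\textbf{Localization phase.} This phase runs at most \(N_{\rm sg}\) iterations, so it costs \(\mathcal O((d+T_p)N_{\rm sg})=\mathcal O((d+T_p)\log(R_t/\epsilon)\log(R_\xi/\epsilon))\). If it stops through the residual test, I would use the reduced-KKT conversion from the proof of Lemma~\ref{lem:bilevel-complexity}. The \(q\)-stationarity is exact, so \(\bar z=X^{-1}q_j\) has a subgradient of \(\Psi_x\) of norm \(\mathcal O(\|(r_{1,j},r_{2,j})\|)\). Then 1-strong convexity, together with \(\|z_j-\bar z\|\le |r_{1,j}|\,\|D^{-1}u\|/|c|\), gives \(\|z_j-z^*\|=\mathcal O(\epsilon_{\rm res})\).

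If it stops through the diameter test, I would first establish that \(\mathcal E_j\) always contains \((t^*,\xi^*)\). This holds because \(\mathcal E_0\supseteq\mathcal B_0\ni(t^*,\xi^*)\) by Lemma~\ref{lem:search-rectangle}, and each cut \(\mathcal H_j\) contains the zero by the monotonicity~\eqref{eq:monotone-T}. The center therefore lies within \(\operatorname{diam}(\mathcal E_j)\le\delta_{\rm loc}\) of \((t^*,\xi^*)\). Lemma~\ref{lem:scalar-localization} then gives \(\|z_j-z^*\|\le C_X\delta_{\rm loc}=\mathcal O(\epsilon)\) for \(\delta_{\rm loc}=\Theta(\epsilon)\), with the constant \(C_X\) absorbed into the fixed problem data.

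\textbf{Fallback phase.} If neither test fires within the budget, the oracle calls Algorithm~\ref{alg:bilevel-certified} with the outer bracket \([t_c-R_t,t_c+R_t]\) from Lemma~\ref{lem:search-rectangle}. Lemma~\ref{lem:bilevel-complexity} then gives an \(\epsilon\)-accurate point at cost \(\mathcal O((d+T_p)\log(R_t/\epsilon)\log(R_\xi/\epsilon))\). Adding the two phases at most doubles this bound, which gives the main claim. Specializing to fixed \(R_t,R_\xi\) gives \(\mathcal O((d+T_p)\log^2(1/\epsilon))\).

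\textbf{Improved bound.} Under the stated diameter decay, \(\operatorname{diam}(\mathcal E_j)\le\delta_{\rm loc}\) as soon as \(j\ge j_\epsilon:=c_{\rm loc}^{-1}\log(C_{\rm loc}R_0/\delta_{\rm loc})=\mathcal O(\log(1/\epsilon))\). For small \(\epsilon\) (or with suitable constants in the \(\Theta\)), \(j_\epsilon\le N_{\rm sg}\), because \(N_{\rm sg}\) grows like \(\log^2(1/\epsilon)\). The fallback is therefore never invoked, and the total cost is \(\mathcal O((d+T_p)\log(1/\epsilon))\).

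\textbf{Main obstacle.} The delicate point is ensuring that \(\mathcal E_j\) contains the zero, so that the diameter certificate really bounds the distance from the center to \((t^*,\xi^*)\). Without this, the diameter test certifies nothing. I would handle it by an explicit induction on \(j\), combining the containment \(\mathcal E_{j+1}\supseteq\mathcal E_j\cap\mathcal H_j\) with the separating-halfspace inequality derived from~\eqref{eq:monotone-T}. A smaller point is choosing the \(\Theta\)-constants so that \(j_\epsilon\le N_{\rm sg}\) holds for all \(\epsilon\) in range, not just asymptotically.
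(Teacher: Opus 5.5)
Your proposal is correct and follows the same route as the paper's proof. It uses a budgeted localization phase whose diameter exit becomes \(z\)-accuracy via Lemma~\ref{lem:scalar-localization}, with the zero kept in every \(\mathcal E_j\) by the monotone cuts, then the certified fallback of Lemma~\ref{lem:bilevel-complexity} at the same order of cost, and an \(\mathcal O(\log(1/\epsilon))\) hitting time for \(\delta_{\rm loc}\) under the assumed diameter decay. Your version is more careful than the paper's in three places: you handle the residual-test exit via the reduced-KKT conversion (which implicitly needs \(\epsilon_{\rm res}=\mathcal O(\epsilon)\)), you spell out the containment induction, and you address the choice of constants ensuring \(j_\epsilon\le N_{\rm sg}\).
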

\begin{proof}
If the accelerator phase certifies within the budget, the returned point
satisfies the localization criterion and, by~\eqref{eq:z-lipschitz}, is
\(\mathcal O(\epsilon)\)-accurate.  Otherwise the oracle falls back to the
certified bilevel solve.  Lemma~\ref{lem:bilevel-complexity} gives the stated
worst-case bound, and the accelerator budget is of the same order.  Under
diameter reduction, monotonicity keeps every zero of \(\mathcal T\) in the
localization set, and the enclosing diameter reaches \(\Theta(\epsilon)\) after
\(\mathcal O(\log(1/\epsilon))\) accelerator steps.  Each step costs
\(\mathcal O(d+T_p)\), so the fallback is not called and the single-logarithm
bound follows.
\end{proof}

\begin{remark}\label{rem:diameter-localization}
The monotone cuts in Algorithm~\ref{alg:monotone-prox} preserve the zero set,
but a generic enclosing-ellipse update gives volume decrease rather than a
direct Euclidean diameter certificate.  A volume bound alone need not control
the residual at the center when the ellipses become highly eccentric.  This is
why Algorithm~\ref{alg:monotone-prox} is used as an accelerator and the
certified \(\log^2(1/\epsilon)\) bound comes from the bilevel fallback.
Diameter reduction can be enforced by fixed-dimensional cutting-plane schemes
with periodic well-rounding, which keep the localization sets at uniformly
bounded eccentricity
\cite{blandGoldfarbTodd1981ellipsoid,grotschel2012geometric}.
\end{remark}

\medskip\noindent\textbf{Warm-started oracle.}
Inside the outer iteration of Section~\ref{sec:two-sided-dbs}, consecutive
scaled proximal subproblems are strongly correlated.  The DBS oracle therefore
uses the current factor-coordinate image as a predictor and falls back to the
certified solve when the prediction is not certified.  On the inverse side, the
subproblem input is
\(y_k=X_k^{-1}x_k-\eta X_k^\top g_k\), the exact subproblem solution is
\(z_k^*=X_k^{-1}x_{k+1}\), and we use
\[
    z_k^{\rm pred}=X_k^{-1}x_k,
    \qquad
    \delta_k=\|X_k^{-1}s_{k-1}\|
\]
as a predictor and scale estimate.  The corresponding scalar predictions are
\[
    t_k^{\rm pred}=v^\top z_k^{\rm pred},
    \qquad
    \xi_k^{\rm pred}
    =\frac{\langle D^{-1}u,\,y_k-z_k^{\rm pred}\rangle}{c},
\]
with warm radii
\[
    r_t^{\rm ws}=\nu_{\rm ws}\|v\|\delta_k,
    \qquad
    r_\xi^{\rm ws}=\nu_{\rm ws}\|D^{-1}u\|\delta_k/|c|,
\]
where \(\nu_{\rm ws}>1\) is a fixed margin.  The warm phase is used only when
these radii are smaller than the cold-start rectangle of
Lemma~\ref{lem:search-rectangle}.  It first tests the predicted scalar pair;
if the residual is not small enough, Algorithm~\ref{alg:monotone-prox} runs in
the warm region.  The warm region is only a localization scale, not a
certificate: acceptance always requires the residual certificate.  If the warm
budget is exhausted, Algorithm~\ref{alg:bilevel-certified} is called with the
warm brackets, and its sign expansions restore the certified worst-case
guarantee of Proposition~\ref{prop:safeguard}.  The Hessian-side prediction is
analogous, with \(z_k^{\rm pred}=Y_k^\top x_k\).

The localization tolerance also tracks the outer progress:
\[
    \delta_{{\rm loc},k}
    =\max\bigl\{\delta_{\rm abs},\;\nu_{\rm rel}\,\delta_k\bigr\},
\]
with \(\nu_{\rm rel}\in(0,1)\) and \(\delta_{\rm abs}>0\).
Corollary~\ref{cor:relative-localization} shows that this schedule preserves
linear convergence down to the absolute floor when \(\delta_k\) decays
geometrically.  Section~\ref{subsec:slope-warm-outer} reports its numerical
effect.

\subsection{Rank-one secant factorizations}
We next derive rank-one factors of the form~\eqref{eq:factorization}. 
The inverse-side requirement is to satisfy the secant condition
\begin{equation}\label{eq:2.10}
    XX^\top y=s.
\end{equation}

Following classical quasi-Newton update and factorization ideas
\cite{fletcher1970new,goldfarb1976factorized}, we use a factor of the form
\[
X = X_0 + \frac{r\hat{v}^\top}{y^\top X_0\hat{v}},
\]
where 
\[
r = s - X_0X_0^\top y.
\]
Here \(\hat v\) is chosen so that \(y^\top X_0\hat v\neq0\).
Let \(a=y^\top X_0\hat v\).  A direct expansion gives
\[
\begin{aligned}
XX^\top y
&=
\left(X_0+\frac{r\hat v^\top}{a}\right)
\left(X_0^\top y+\frac{\hat v r^\top y}{a}\right)  \\
&=
X_0X_0^\top y + r
+\frac{r^\top y}{a}X_0\hat v
+\frac{(r^\top y)\|\hat v\|^2}{a^2}r  \\
&=
s +
\frac{r^\top y}{a^2}
\left(aX_0\hat v+\|\hat v\|^2r\right).
\end{aligned}
\]
Thus a simple sufficient condition for the secant equation \(XX^\top y=s\) is
\[
r^\top y=0,
\]
which is equivalent to
\[
    s^\top y= y^\top X_0X_0^\top y.
\]

We now take \(X_0=D_I\), where \(D_I\succ0\) is diagonal.  The requirement becomes
\[
    y^\top D_I^2y=s^\top y .
\]
This is a diagonal form of the weak secant equation, a relaxation of the full
secant equation commonly used to obtain positive and inexpensive diagonal
quasi-Newton or spectral-gradient scalings
\cite{dennisWolkowicz1993,barzilai1988two,park2020variable,yu2023mini}.
Substituting this choice into the above construction yields
\[
    X=D_I+\frac{(s-D_I^2y)\hat{v}^\top}{\hat{v}^\top D_Iy},
    \qquad 
    y^\top D_I^2y=s^\top y .
\]
The scalar inverse-side specialization is obtained by
\[
    D_I=\alpha I,\qquad
    \alpha=\sqrt{\frac{s^\top y}{\|y\|^2}} .
\]

Interchanging the roles of \(s\) and \(y\) gives a Hessian-side factor \(Y\)
satisfying \(YY^\top s = y\).  With a diagonal \(D_H\succ0\) satisfying
\[
    s^\top D_H^2s=s^\top y ,
\]
we obtain
\[
Y = D_H + \frac{(y-D_H^2s)\hat v^\top}{\hat v^\top D_Hs}.
\]
The scalar Hessian-side construction is recovered by
\[
    D_H=\beta I,\qquad
    \beta = \sqrt{\frac{s^\top y}{\|s\|^2}}.
\]

The diagonal weak secant equations leave many choices for \(D_I\) and \(D_H\).
For example, if \(\tilde d_i>0\) are arbitrary positive weights, then
\[
    d_{I,i}^2
    =
    \frac{(s^\top y)\tilde d_i^2}{\sum_j \tilde d_j^2y_j^2}
\]
satisfies \(y^\top D_I^2y=s^\top y\).  A parallel formula with \(y_j\) replaced
by \(s_j\) gives a Hessian-side diagonal satisfying \(s^\top D_H^2s=s^\top y\).
One may also project a preferred diagonal scaling onto these weak secant
constraints while imposing lower and upper bounds on the diagonal entries, in
the same spirit as safeguarded diagonal BB and variable-metric constructions
\cite{barzilai1988two,raydan1997barzilai,birgin2000nonmonotone,
park2020variable,yu2023mini}.

The freedom in choosing \(\hat v\) is essential.  We next choose \(\hat v\) in a
diagonal-adapted one-parameter family and show that it recovers the
corresponding diagonally initialized zero-memory Broyden members.

\subsection{Broyden interpretation of the factor coordinate}
\label{sec:broyden-factor}

The factor coordinate also has a matrix-level quasi-Newton interpretation.  The
same rank-one factors that define the scaled proximal oracle generate
diagonally initialized zero-memory Broyden members.
The resulting \(\psi\in[0,1]\) segment is the usual Broyden convex class and
has the corresponding least-change and self-scaling interpretations
\cite{dennisSchnabel1979,orenLuenberger1974,dennisWolkowicz1993}.

Let \(s,y\) be any fixed curvature pair with \(\rho=s^\top y>0\), and suppress
the iteration index.  On the inverse side, let \(D_I\succ0\) satisfy
\(y^\top D_I^2y=\rho\), and define
\[
    H_0=D_I^2,\qquad r=s-H_0y,\qquad \sigma_I=r^\top H_0^{-1}r .
\]
Then \(r^\top y=0\).  For
\[
    \hat v_I(\omega)=D_Iy+\omega D_I^{-1}r,\qquad \omega\ge0,
\]
the factor
\[
    X_I(\omega)=D_I+\frac{r\hat v_I(\omega)^\top}{\rho}
\]
satisfies \(X_I(\omega)X_I(\omega)^\top y=s\), and a direct expansion gives
\[
    X_I(\omega)X_I(\omega)^\top
    =
    H_{\mathrm{DFP}}^D
    +
    \psi_I(\omega)\frac{rr^\top}{\rho},
    \qquad
    \psi_I(\omega)=2\omega+\frac{\sigma_I}{\rho}\omega^2,
\]
where
\[
    H_{\mathrm{DFP}}^D
    =
    H_0-\frac{H_0yy^\top H_0}{\rho}+\frac{ss^\top}{\rho}
\]
is the diagonally initialized zero-memory DFP inverse update.  The
corresponding diagonally initialized BFGS inverse member is
\[
    H_{\mathrm{BFGS}}^D=H_{\mathrm{DFP}}^D+\frac{rr^\top}{\rho}.
\]
Thus, whenever \(0\le\omega\le\omega_I^*\), the parameter
\(\psi_I(\omega)\in[0,1]\) and
\begin{equation}\label{eq:inverse-broyden-convex}
    X_I(\omega)X_I(\omega)^\top
    =
    (1-\psi_I(\omega))H_{\mathrm{DFP}}^D
    +
    \psi_I(\omega)H_{\mathrm{BFGS}}^D .
\end{equation}
This is the diagonally initialized inverse Broyden convex segment, with
\(\omega=0\) giving the DFP endpoint and
\[
    \omega_I^*=\frac{1}{1+\sqrt{1+\sigma_I/\rho}}
\]
giving the BFGS endpoint.

The Hessian side is analogous.  If \(D_H\succ0\) satisfies
\(s^\top D_H^2s=\rho\), and
\[
    B_0=D_H^2,\qquad e=y-B_0s,\qquad \sigma_H=e^\top B_0^{-1}e,
\]
then the factor \(Y_H(\omega)=D_H+e\hat v_H(\omega)^\top/\rho\), with
\(\hat v_H(\omega)=D_Hs+\omega D_H^{-1}e\), satisfies
\[
    Y_H(\omega)Y_H(\omega)^\top
    =
    B_{\mathrm{BFGS}}^D
    +
    \psi_H(\omega)\frac{ee^\top}{\rho},
    \qquad
    \psi_H(\omega)=2\omega+\frac{\sigma_H}{\rho}\omega^2,
\]
where \(B_{\mathrm{BFGS}}^D\) is the diagonally initialized zero-memory BFGS
Hessian update.  Define the corresponding diagonally initialized DFP Hessian
member by
\[
    B_{\mathrm{DFP}}^D
    =
    B_{\mathrm{BFGS}}^D+\frac{ee^\top}{\rho}.
\]
For \(0\le\omega\le\omega_H^*\), where
\[
    \omega_H^*=\frac{1}{1+\sqrt{1+\sigma_H/\rho}},
\]
we have \(\psi_H(\omega)\in[0,1]\) and
\begin{equation}\label{eq:hessian-broyden-convex}
    Y_H(\omega)Y_H(\omega)^\top
    =
    (1-\psi_H(\omega))B_{\mathrm{BFGS}}^D
    +
    \psi_H(\omega)B_{\mathrm{DFP}}^D .
\end{equation}
Thus the Hessian-side factor traces the BFGS-to-DFP Broyden convex segment.

For the scalar specialization, set \(D_I=\alpha I\), \(D_H=\beta I\), and use
the scalar direction
\[
    \hat v(\tau)=\tau s+(1-\tau)y,\qquad \tau\in[0,1],
\]
and define
\begin{equation}\label{eq:scalar-omega-map}
\begin{aligned}
    \omega_I(\tau)
    &=
    \frac{\alpha^2\tau}{1-\tau(1-\alpha^2)},
    \qquad
    \omega_H(\tau)
    &=
    \frac{\beta^2(1-\tau)}{\tau+\beta^2(1-\tau)} .
\end{aligned}
\end{equation}
Then the same scalar direction is collinear with the diagonal-adapted
directions on both sides:
\begin{equation}\label{eq:scalar-collinearity}
\begin{aligned}
    \hat v(\tau)
    &\parallel
    \alpha y+\omega_I(\tau)\alpha^{-1}(s-\alpha^2y),\\
    \hat v(\tau)
    &\parallel
    \beta s+\omega_H(\tau)\beta^{-1}(y-\beta^2s).
\end{aligned}
\end{equation}
Consequently the scalar \(\tau\)-coordinate is just a reparametrization of the
diagonal-adapted \(\omega\)-coordinate in the scalar case.
In particular, \(\tau=0\) gives the inverse-side DFP endpoint and
\(\tau=1\) gives the Hessian-side BFGS endpoint, while
\[
    \tau^*=\frac{\|y\|}{\|s\|+\|y\|}
\]
realizes the scalar self-scaled BFGS inverse member and, on the Hessian side,
the scalar self-scaled DFP member.

\subsection{Self-correcting damping}
\label{sec:damping}
It is necessary to ensure that the pair \((s,y)\) used for updating has positive curvature.
Two widely used techniques in deterministic optimization are skipping
\cite{nocedalWright2006} and damping \cite{powell1978lagrangian}. 
In this work we adopt a self-correcting damping technique in the spirit of
\cite{curtisRobinsonZhou2020}: given parameters
\(0<\theta_1<1<\theta_2\) and the proximal-gradient stepsize \(\eta>0\), we
replace the raw gradient difference \(\bar y\) by
\[
\hat y(\gamma) = \gamma s + (1-\gamma)\eta \bar y .
\]
The scaled gradient displacement \(\eta\bar y\) is used so that the curvature
pair is consistent with the proximal-gradient displacement.  The damping
parameter is
\[
\gamma = \min\Bigl\{\gamma\in[0,1] : 
\theta_1 \le \frac{s^\top\hat y(\gamma)}{\|s\|^2}
\le
\frac{\|\hat y(\gamma)\|^2}{s^\top\hat y(\gamma)}
\le \theta_2\Bigr\}.
\]
The choice \(\gamma=1\) is always feasible, since then \(\hat y=s\) and both
ratios equal one.  The feasible set is closed by continuity and is nonempty.
Therefore the minimum is attained and the damping step is well defined.  After
\(\gamma\) is chosen, set \(\hat y=\hat y(\gamma)\).  The update formulas
impose the secant equations on the
safeguarded pair \((s,\hat y)\).  When the scaled raw pair \((s,\eta\bar y)\) already
satisfies the bounds, no correction is needed.

Unless otherwise stated, \(y\) denotes the safeguarded curvature vector
\(\hat y\).  Thus it
satisfies the uniform curvature bound
\begin{equation}\label{eq:damped-bounds}
\theta_1 \le \frac{s^\top y}{\|s\|^2} \le \frac{\| y\|^2}{s^\top y} \le \theta_2,
\end{equation}
which is essential for constructing the uniform positive definiteness property in the next section.

\section{Two-Sided Decomposed Quasi-Newton Methods}
\label{sec:two-sided-dbs}

We now turn the rank-one factorizations into a family of variable-metric
proximal-gradient methods. The framework has two sides: an inverse-side
construction, which builds \(B_k^{-1}=X_kX_k^\top\), and a Hessian-side
construction, which builds \(B_k=Y_kY_k^\top\).  The preceding section
established the scaled-proximal reduction and identified the Broyden members
represented by the factors.  This section specifies the admissibility
conditions needed for uniform metric bounds, presents a generic DBS algorithmic
framework, and gives the convergence and complexity analysis.

\subsection{Two-sided DBS framework}
\label{sec:two-sided-framework}

Let \(s_k=x_k-x_{k-1}\) and
\(\bar y_k=g_k-g_{k-1}\), where \(g_k=\nabla f(x_k)\).  Let \(y_k\)
denote the damped curvature vector obtained from the pair
\((s_k,\eta\bar y_k)\), so that it satisfies~\eqref{eq:damped-bounds}.  The
identities below are secant equations for this modified curvature pair.  Let
\[
    \rho_k=s_k^\top y_k>0 .
\]
The two sides use the following diagonal-plus-rank-one factors.

\medskip   \noindent\textbf{Inverse side.}
Choose a diagonal \(D_{k,I}\succ0\) satisfying
\[
    y_k^\top D_{k,I}^2y_k=\rho_k,
\]
and set
\begin{equation}\label{eq:inv_factor}
    r_{k,I}=s_k-D_{k,I}^2y_k,\qquad
    X_k = D_{k,I} + \frac{r_{k,I}\hat v_{k,I}^\top}
    {\hat v_{k,I}^\top D_{k,I}y_k},
\end{equation}
where \(\hat v_{k,I}^\top D_{k,I}y_k\ne0\).  Then
\(B_{k,I}^{-1} = X_k X_k^\top\) satisfies
\(B_{k,I}^{-1} y_k = s_k\).

\medskip   \noindent\textbf{Hessian side.}
Choose a diagonal \(D_{k,H}\succ0\) satisfying
\[
    s_k^\top D_{k,H}^2s_k=\rho_k,
\]
and set
\begin{equation}\label{eq:hess-factor}
    e_{k,H}=y_k-D_{k,H}^2s_k,\qquad
    Y_k = D_{k,H} + \frac{e_{k,H}\hat v_{k,H}^\top}
    {\hat v_{k,H}^\top D_{k,H}s_k},
\end{equation}
where \(\hat v_{k,H}^\top D_{k,H}s_k\ne0\).  Then
\(B_{k,H} = Y_k Y_k^\top\) satisfies \(B_{k,H} s_k = y_k\).

The inverse and Hessian constructions impose different secant equations: the
inverse side enforces \(B_{k,I}^{-1}y_k=s_k\), while the Hessian side enforces
\(B_{k,H}s_k=y_k\).  In general these two choices do not produce inverse
matrices of one another.
The update follows the composite proximal gradient step using the corresponding metric:
\[
x_{k+1} = \prox_{\eta h}^{B_k}\!\bigl(x_k - \eta B_k^{-1}\nabla f(x_k)\bigr).
\]
For the inverse side we directly take \(B_k^{-1}=B_{k,I}^{-1}\).  
For the Hessian side we have \(B_k = B_{k,H}\).  Then \(B_k^{-1}\) is computed
through the rank-one inverse of \(Y_k\) as detailed in
Section~\ref{sec:algo-variants}.

\subsection{Admissibility and uniform boundedness}
\label{sec:admissible}
The damping guarantees
\begin{equation}\label{eq:theta-bounds}
\theta_1 \le \frac{s_k^\top y_k}{\|s_k\|^2} \le \frac{\|y_k\|^2}{s_k^\top y_k} \le \theta_2,
\end{equation}
which keeps \(\|s_k\|\) and \(\|y_k\|\) on comparable scales relative to
\(\rho_k=s_k^\top y_k\).

We now specify the admissible direction family for the general diagonal
construction.  On the inverse side we choose
\[
    \hat v_{k,I}
    =
    D_{k,I}y_k+\omega_{k,I}D_{k,I}^{-1}r_{k,I},
\]
and on the Hessian side we choose
\[
    \hat v_{k,H}
    =
    D_{k,H}s_k+\omega_{k,H}D_{k,H}^{-1}e_{k,H},
\]
where \(\omega_{k,I},\omega_{k,H}\ge0\).  These directions are adapted to the
diagonal weak secant equations.  Indeed, since \(r_{k,I}^\top y_k=0\) and
\(e_{k,H}^\top s_k=0\), they satisfy
\[
    \hat v_{k,I}^\top D_{k,I}y_k
    =
    \rho_k,
\]
and
\[
    \hat v_{k,H}^\top D_{k,H}s_k
    =
    \rho_k .
\]
Thus the rank-one denominators in~\eqref{eq:inv_factor}
and~\eqref{eq:hess-factor} are automatically nonzero.

\begin{definition}[Admissible metrics for DBS]\label{def:admissible-dbs}
A metric sequence generated by the DBS framework is admissible if the
safeguarded curvature pairs
satisfy~\eqref{eq:theta-bounds}, the diagonal weak-secant scalings satisfy
\[
    0<d_-\le \lambda_{\min}(D_{k,I}^2),\lambda_{\min}(D_{k,H}^2),
    \qquad
    \lambda_{\max}(D_{k,I}^2),\lambda_{\max}(D_{k,H}^2)\le d_+<\infty,
\]
and the factor directions have the diagonal-adapted form above with
\[
    0\le \omega_{k,I},\omega_{k,H}\le\bar\omega
\]
for constants \(d_-,d_+,\bar\omega\) independent of \(k\).
\end{definition}

The following lemma shows that admissibility is a sufficient condition for
uniform positive definiteness.

\begin{lemma}[Uniform positive definiteness for diagonal factors]
\label{lem:generic-pd}
Assume that the metric sequence generated by DBS is admissible in the sense of
Definition~\ref{def:admissible-dbs}.
Then there exist
constants \(0<m\le M<\infty\), independent of \(k\), such that the inverse-side
and Hessian-side metrics satisfy
\[
    mI\preceq B_{k,I}^{-1}\preceq MI,
    \qquad
    mI\preceq B_{k,H}\preceq MI .
\]
\end{lemma}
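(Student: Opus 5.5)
We argue the inverse side in detail; the Hessian side is identical after interchanging \(s_k\leftrightarrow y_k\), \(D_{k,I}\leftrightarrow D_{k,H}\) and \(r_{k,I}\leftrightarrow e_{k,H}\). The plan is to use the Broyden expansion of Section~\ref{sec:broyden-factor}. With \(H_0=D_{k,I}^2\), \(r=r_{k,I}\) and \(\rho=\rho_k\), we have
\[
    X_kX_k^\top
    =
    H_0-\frac{H_0yy^\top H_0}{\rho}+\frac{ss^\top}{\rho}
    +\psi\,\frac{rr^\top}{\rho},
    \qquad
    \psi=2\omega+\frac{\sigma_I}{\rho}\omega^2\ge0 .
\]
We then bound the largest and smallest eigenvalues of this expression separately. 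Both bounds should depend only on \(\theta_1,\theta_2,d_-,d_+,\bar\omega\).

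\textbf{Upper bound.} Take the trace, or use the spectral norm term by term. We have \(\|H_0\|\le d_+\) and \(\|s\|^2/\rho\le 1/\theta_1\) by~\eqref{eq:theta-bounds}. The term \(H_0yy^\top H_0/\rho\) is positive semidefinite and is subtracted, so it can be dropped. For the rank-one correction,
\[
    \|r\|^2/\rho\le 2(\|s\|^2+d_+^2\|y\|^2)/\rho\le 2(1/\theta_1+d_+^2\theta_2).
\]
Similarly, \(\sigma_I/\rho\le \|r\|^2/(d_-\rho)\) is uniformly bounded, so \(\psi\le 2\bar\omega+C\bar\omega^2\). Adding these pieces gives \(M\).

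\textbf{Lower bound.} This is the main obstacle, because the DFP part subtracts a term. Since \(\psi\ge0\), it suffices to lower bound \(H_{\rm DFP}^D\). Factor it as \(H_0^{1/2}(I-ww^\top/\|w\|^2+\tilde s\tilde s^\top/\rho)H_0^{1/2}\), with \(w=H_0^{1/2}y\), \(\|w\|^2=\rho\) by the weak secant equation, and \(\tilde s=H_0^{-1/2}s\). The middle matrix equals \(P+\tilde s\tilde s^\top/\rho\), where \(P\) is the orthogonal projector onto \(w^\perp\). Its eigenvalues are \(1\) on \((\mathrm{span}\{w,\tilde s\})^\perp\), and on the two-dimensional plane its determinant equals \((\tilde s^\top w)^2/(\rho\|w\|^2)=\rho^2/\rho^2=1\). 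Indeed \(\tilde s^\top w=s^\top y=\rho\), and the determinant is computed in an orthonormal basis of the plane adapted to \(w\). Its trace is \(1+\|\tilde s\|^2/\rho\le 1+\|s\|^2/(d_-\rho)\le 1+1/(d_-\theta_1)=:\kappa\). The two eigenvalues on the plane have product \(1\) and sum at most \(\kappa\), so the smaller one is at least \(1/\kappa\). Hence
\[
    H_{\rm DFP}^D\succeq d_-\min\{1,1/\kappa\}\,I=d_-/\kappa\,I=:m .
\]

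On the Hessian side the same argument applies to \(B_{\rm BFGS}^D+\psi_He e^\top/\rho\). Here \(B_{\rm BFGS}^D=B_0-B_0ss^\top B_0/\rho+yy^\top/\rho\) has exactly the same projector-plus-rank-one structure, with \(s^\top B_0s=\rho\) and \(\|y\|^2/\rho\le\theta_2\). This gives the lower bound \(d_-/(1+\theta_2/d_-)\). The upper bound again uses \(\|y\|^2/\rho\le\theta_2\), \(\|e\|^2/\rho\le 2(\theta_2+d_+^2/\theta_1)\), and \(\omega\le\bar\omega\). Taking the minimum of the two lower constants and the maximum of the two upper constants gives \(m\) and \(M\).
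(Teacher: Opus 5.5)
Your argument is correct, but it takes a different route from the paper.

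The paper works with the factor itself. It bounds \(\|X_k\|\le\sqrt{d_+}+C_rC_v\) by the triangle inequality, using that the rank-one denominator is exactly \(\rho_k\). It then bounds \(\|X_k^{-1}\|\) via Sherman--Morrison. There \(\omega_{k,I}\ge0\) is used only to ensure the denominator \(\rho_k+\omega_{k,I}r_{k,I}^\top D_{k,I}^{-2}r_{k,I}\) is at least \(\rho_k\). Singular-value bounds on \(X_k\) then give the eigenvalue bounds.

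You instead pass to the Gram matrix \(X_kX_k^\top=H^D_{\mathrm{DFP}}+\psi rr^\top/\rho\) and discard the \(\psi\)-term for the lower bound. You control the DFP core by noting that the weak secant normalization \(y^\top H_0y=\rho\) makes the congruence-reduced matrix \(P+\tilde s\tilde s^\top/\rho\) have determinant one on \(\operatorname{span}\{w,\tilde s\}\). This is the classical identity \(\det H^D_{\mathrm{DFP}}=\det H_0\cdot s^\top y/(y^\top H_0y)\), and you combine it with a trace bound.

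What your route buys:
\begin{itemize}
\item explicit constants;
\item a lower constant \(d_-/(1+1/(d_-\theta_1))\) (respectively \(d_-/(1+\theta_2/d_-)\) on the Hessian side) that does not depend on \(\bar\omega\). The cap on \(\omega\) is needed only for \(M\). In the paper's estimate, \(m\) also carries \(\bar\omega\) through \(C_v\).
\end{itemize}

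What the paper's route buys: it is self-contained, and it yields bounds on \(\|X_k\|\) and \(\|X_k^{-1}\|\) themselves. Yours leans on the Broyden expansion of Section~\ref{sec:broyden-factor}, which the paper only asserts.

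To close your argument, add two things:
\begin{itemize}
\item the short verification of that expansion: \(D\hat v=H_0y+\omega r\), \(\|\hat v\|^2=\rho+\omega^2\sigma_I\) (using \(r^\top y=0\)), and \(H_0yr^\top+ry^\top H_0+rr^\top=ss^\top-H_0yy^\top H_0\);
\item the degenerate case \(r=0\). There \(\tilde s=w\), the span is one-dimensional, and the middle matrix is exactly \(I\).
\end{itemize}
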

\begin{proof}
We prove the inverse-side statement.  The Hessian side is identical after
interchanging \(s_k\) and \(y_k\).  From the damping bounds,
\[
    \|s_k\|\le \rho_k^{1/2}/\sqrt{\theta_1},
    \qquad
    \|y_k\|\le \sqrt{\theta_2\rho_k}.
\]
The diagonal bounds imply
\[
    \|r_{k,I}\|
    \le
    \|s_k\|+\|D_{k,I}^2y_k\|
    \le
    C_r\rho_k^{1/2},
    \qquad
    \|\hat v_{k,I}\|\le C_v\rho_k^{1/2},
\]
where \(C_r,C_v\) depend only on
\(\theta_1,\theta_2,d_-,d_+\), and \(\bar\omega\).  Since
\[
    \hat v_{k,I}^\top D_{k,I}y_k
    =
    y_k^\top D_{k,I}^2y_k+\omega_{k,I}r_{k,I}^\top y_k
    =
    \rho_k ,
\]
we get \(\|X_k\|\le \sqrt{d_+}+C_rC_v\).  For the inverse, the
Sherman--Morrison formula gives
\[
    X_k^{-1}
    =
    D_{k,I}^{-1}
    -
    \frac{D_{k,I}^{-1}r_{k,I}\hat v_{k,I}^\top D_{k,I}^{-1}}
    {\rho_k+\hat v_{k,I}^\top D_{k,I}^{-1}r_{k,I}} .
\]
Moreover,
\[
    \hat v_{k,I}^\top D_{k,I}^{-1}r_{k,I}
    =
    \omega_{k,I} r_{k,I}^\top D_{k,I}^{-2}r_{k,I}\ge0,
\]
so the denominator is at least \(\rho_k\).  The same estimates therefore bound
\(\|X_k^{-1}\|\) uniformly.  Hence
\(X_kX_k^\top\) has eigenvalues bounded above and below uniformly in \(k\).
\end{proof}

The scalar \(\tau\)-family is a special case of the preceding admissible
family.  It writes the auxiliary direction in the original curvature-pair
coordinate
\[
    \hat v_k(\tau)=\tau s_k+(1-\tau)y_k,\qquad \tau\in[0,1].
\]

\begin{corollary}[Admissibility of the scalar \(\tau\)-family]
\label{cor:scalar-admissible}
Assume that the damped pairs satisfy~\eqref{eq:theta-bounds}.  For the scalar
choices
\[
    D_{k,I}=\alpha_k I,\qquad
    \alpha_k^2=\frac{s_k^\top y_k}{\|y_k\|^2},
    \qquad
    D_{k,H}=\beta_k I,\qquad
    \beta_k^2=\frac{s_k^\top y_k}{\|s_k\|^2},
\]
and for any \(\tau_k\in[0,1]\), the scalar direction
\(\hat v_k(\tau_k)=\tau_k s_k+(1-\tau_k)y_k\) is admissible in the sense of
Definition~\ref{def:admissible-dbs}.  Consequently the scalar metrics generated
by DBS are uniformly positive definite.
\end{corollary}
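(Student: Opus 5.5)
The plan is to show that the scalar data fit Definition~\ref{def:admissible-dbs} directly, and then invoke Lemma~\ref{lem:generic-pd}. There are three things to check: the diagonal bounds, the diagonal-adapted form of the direction, and a uniform bound on \(\omega\).

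First, the diagonal bounds. From~\eqref{eq:theta-bounds}, \(\beta_k^2=s_k^\top y_k/\|s_k\|^2\in[\theta_1,\theta_2]\). Also \(\alpha_k^{-2}=\|y_k\|^2/s_k^\top y_k\in[\theta_1,\theta_2]\), since the middle ratio is at least the first, which is at least \(\theta_1\). So \(\alpha_k^2\in[\theta_2^{-1},\theta_1^{-1}]\), and we may take \(d_-=\min\{\theta_1,\theta_2^{-1}\}\) and \(d_+=\max\{\theta_2,\theta_1^{-1}\}\). The weak secant equations \(y_k^\top D_{k,I}^2y_k=\rho_k\) and \(s_k^\top D_{k,H}^2 s_k=\rho_k\) hold by construction.

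Second, the diagonal-adapted form. Here I will use the collinearity relation~\eqref{eq:scalar-collinearity} together with the maps~\eqref{eq:scalar-omega-map}. Since the factor \(X_k=D+r\hat v^\top/(\hat v^\top Dy)\) is invariant under nonzero rescaling of \(\hat v\), I first need the proportionality constant to be nonzero.

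On the inverse side, \(\tau s+(1-\tau)y\) equals a positive multiple of \(\alpha y+\omega_I\alpha^{-1}(s-\alpha^2 y)\). To see this, match coefficients: the \(s\)-coefficient \(\omega_I/\alpha\) and the \(y\)-coefficient \(\alpha(1-\omega_I)\) must be in ratio \(\tau:(1-\tau)\). This gives \(\omega_I=\alpha^2\tau/(1-\tau+\alpha^2\tau)\), which is exactly \(\omega_I(\tau)\).

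The denominator \(1-\tau+\alpha^2\tau\) is a convex combination of \(1\) and \(\alpha^2\), so it lies in \([\min\{1,\alpha^2\},\max\{1,\alpha^2\}]\) and is positive. This gives \(0\le\omega_I(\tau)\le\alpha^2/\min\{1,\alpha^2\}=\max\{1,\alpha^2\}\le\max\{1,\theta_1^{-1}\}\). In the same way, \(\omega_H(\tau)=\beta^2(1-\tau)/(\tau+\beta^2(1-\tau))\le\max\{1,\beta^2\}\le\max\{1,\theta_2\}\). Take \(\bar\omega\) to be the larger of these two bounds.

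The step I expect to need care is the degenerate case where the proportionality factor could vanish or the denominators \(\hat v^\top Dy\) could be zero. The factor is \(\tau\alpha/\omega_I\) when \(\tau>0\), and \((1-\tau)/(\alpha(1-\omega_I))\) otherwise; it is positive in either case. Hence \(\hat v_k(\tau)^\top D_{k,I}y_k\) is a positive multiple of \(\rho_k\), and so is nonzero. The same holds on the Hessian side.

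Because the factor is unchanged by this rescaling, \(X_k\) and \(Y_k\) built from \(\hat v_k(\tau_k)\) coincide with the diagonal-adapted factors with parameter \(\omega_{k,I}(\tau_k)\) or \(\omega_{k,H}(\tau_k)\) in \([0,\bar\omega]\). The sequence is therefore admissible, and Lemma~\ref{lem:generic-pd} yields uniform positive definiteness.
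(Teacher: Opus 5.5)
Your proposal is correct and follows the same route as the paper's proof. Both bound \(\alpha_k^2\in[\theta_2^{-1},\theta_1^{-1}]\) and \(\beta_k^2\in[\theta_1,\theta_2]\) from the damping chain, use the reparametrization~\eqref{eq:scalar-omega-map} and collinearity~\eqref{eq:scalar-collinearity} together with invariance of the factor under rescaling of \(\hat v_k\), and then invoke Lemma~\ref{lem:generic-pd}. The only differences are minor: you additionally check that the proportionality constant \((1-\tau+\alpha^2\tau)/\alpha\) is positive, and your bound \(\bar\omega=\max\{1,\theta_1^{-1},\theta_2\}\) is looser than the paper's \(\omega_{k,I},\omega_{k,H}\in[0,1]\) (the denominator \(1-\tau+\alpha^2\tau\) already dominates the numerator \(\alpha^2\tau\)), though any uniform bound suffices.
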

\begin{proof}
The damping bounds imply
\[
    \theta_2^{-1}\le \alpha_k^2\le \theta_1^{-1},
    \qquad
    \theta_1\le \beta_k^2\le \theta_2 .
\]
Thus the scalar diagonals satisfy the diagonal boundedness condition in
Definition~\ref{def:admissible-dbs}.  With the iteration index restored, the
reparametrization~\eqref{eq:scalar-omega-map} and
collinearity~\eqref{eq:scalar-collinearity} give diagonal-adapted parameters
\(\omega_{k,I}(\tau_k),\omega_{k,H}(\tau_k)\in[0,1]\) for every
\(\tau_k\in[0,1]\).  Since the factor formula is invariant under nonzero
rescaling of \(\hat v_k\), the scalar factors are admissible by
Definition~\ref{def:admissible-dbs}.  Lemma~\ref{lem:generic-pd} then gives
uniform positive definiteness.
Thus the scalar \(\tau\)-family is the scalar-coordinate representation of the
same diagonal-adapted \(\omega\)-family.
\end{proof}

\subsection{Algorithmic framework and scalar variants}
\label{sec:algo-variants}

The two-sided factors are applied through rank-one products and rank-one
inverses.  For a diagonal-plus-rank-one matrix
\[
    Y=D+p q^\top,\qquad 1+q^\top D^{-1}p\neq0,
\]
the Sherman--Morrison identity gives
\[
    Y^{-1}
    =
    D^{-1}
    -
    \frac{D^{-1}p q^\top D^{-1}}{1+q^\top D^{-1}p},
    \qquad
    Y^{-\top}
    =
    D^{-1}
    -
    \frac{D^{-1}q p^\top D^{-1}}{1+q^\top D^{-1}p}.
\]
Thus the Hessian-side method applies \(B_k^{-1}\) without forming a dense
inverse.

  For the inverse side, \(B_k^{-1}=X_kX_k^\top\), and the metric proximal step is
  \[
  z_{k+1}
  =
  \prox_{\eta h\circ X_k}
  \bigl(X_k^{-1}x_k-\eta X_k^\top g_k\bigr),
  \qquad
  x_{k+1}=X_kz_{k+1}.
  \]
  For the Hessian side, \(B_k=Y_kY_k^\top\), so \(B_k^{-1}=Y_k^{-\top}Y_k^{-1}\).
  Using the factor \(Y_k^{-\top}\), the step becomes
  \[
  z_{k+1}
  =
  \prox_{\eta h\circ Y_k^{-\top}}
  \bigl(Y_k^\top x_k-\eta Y_k^{-1}g_k\bigr),
  \qquad
  x_{k+1}=Y_k^{-\top}z_{k+1}.
  \]
  In both cases the scaled proximal map is computed by the safeguarded oracle
  of Section~\ref{sub:bilevel-search}.  During the outer iteration this oracle
  is warm-started from the current factor-coordinate image and the previous
  outer step; the two-dimensional monotone localization is used as the
  practical accelerator, and the certified bilevel solve supplies the fallback
  guarantee.  This gives the generic DBS framework summarized in
  Algorithm~\ref{alg:twosided}.

  \begin{algorithm}[htbp!]
  \caption{Generic DBS algorithm for solving \(\min_x f(x)+h(x)\)}
  \label{alg:twosided}
  \begin{algorithmic}[1]
  \Require Initial point \(x_0\), stepsize \(\eta>0\), maximum iteration \(K\),
  damping parameters \(0<\theta_1<1<\theta_2\), side
  \(\in\{\mathrm{Inv},\mathrm{Hess}\}\), diagonal scaling rule,
  factor-coordinate rule, and curvature safeguards
  \(\epsilon_s,\epsilon_\rho>0\), stationarity tolerance
  \(\epsilon_{\rm pg}>0\).
  \State \(g_0=\nabla f(x_0)\).
  \State \(x_1=\prox_{\eta h}(x_0-\eta g_0)\), \(g_1=\nabla f(x_1)\).
  \For{\(k=1,\ldots,K\)}
      \State \(s_k=x_k-x_{k-1}\), \(\bar y_k=g_k-g_{k-1}\).
      \If{\(\|s_k\|\le\epsilon_s\)}
          \State Compute
          \(\mathcal G_\eta(x_k)=\eta^{-1}\bigl(x_k-\prox_{\eta h}(x_k-\eta g_k)\bigr)\).
          \If{\(\|\mathcal G_\eta(x_k)\|\le\epsilon_{\rm pg}\)}
              \State \Return \(x_k\) \Comment{certified approximate stationarity}
          \EndIf
          \State \(x_{k+1}=\prox_{\eta h}(x_k-\eta g_k)\),\quad
          \(g_{k+1}=\nabla f(x_{k+1})\) \Comment{restart with scalar metric}
          \State \textbf{continue}
      \EndIf
      \If{\(s_k^\top(\eta\bar y_k)\le\epsilon_\rho\|s_k\|^2\)}
          \State \(x_{k+1}=\prox_{\eta h}(x_k-\eta g_k)\),\quad
          \(g_{k+1}=\nabla f(x_{k+1})\) \Comment{skip metric update}
          \State \textbf{continue}
      \EndIf
      \State Apply damping to obtain \(y_k\) from \((s_k,\eta\bar y_k)\) such that~\eqref{eq:damped-bounds} holds.
      \If{side \(=\mathrm{Inv}\)}
          \State Choose \(D_{k,I}\succ0\) satisfying
          \(y_k^\top D_{k,I}^2y_k=s_k^\top y_k\), and choose an admissible
          \(\widehat v_{k,I}\).
          \State
          \[
          X_k=D_{k,I}
          +
          \frac{(s_k-D_{k,I}^2y_k)\widehat v_{k,I}^\top}
          {\widehat v_{k,I}^\top D_{k,I}y_k}.
          \]
          \State Compute
          \[
          z_{k+1}
          =
          \prox_{\eta h\circ X_k}
          \bigl(X_k^{-1}x_k-\eta X_k^\top g_k\bigr)
          \]
          by the warm-started safeguarded oracle of
          Section~\ref{sub:bilevel-search}.
          \State \(x_{k+1}=X_kz_{k+1}\).
      \Else
          \State Choose \(D_{k,H}\succ0\) satisfying
          \(s_k^\top D_{k,H}^2s_k=s_k^\top y_k\), and choose an admissible
          \(\widehat v_{k,H}\).
          \State
          \[
          Y_k=D_{k,H}
          +
          \frac{(y_k-D_{k,H}^2s_k)\widehat v_{k,H}^\top}
          {\widehat v_{k,H}^\top D_{k,H}s_k}.
          \]
          \State Compute
          \[
          z_{k+1}
          =
          \prox_{\eta h\circ Y_k^{-\top}}
          \bigl(Y_k^\top x_k-\eta Y_k^{-1}g_k\bigr)
          \]
          by the warm-started safeguarded oracle of
          Section~\ref{sub:bilevel-search}.
          \State \(x_{k+1}=Y_k^{-\top}z_{k+1}\).
      \EndIf
      \State \(g_{k+1}=\nabla f(x_{k+1})\).
  \EndFor
  \Ensure \(x_{K+1}\).
  \end{algorithmic}
  \end{algorithm}

  The concrete scalar variants used in the experiments are listed in
  Table~\ref{tab:dbs-variants}.  They take
  \[
      D_{k,I}=\alpha_k I,\qquad
      \alpha_k=\sqrt{\frac{s_k^\top y_k}{\|y_k\|^2}},
      \qquad
      D_{k,H}=\beta_k I,\qquad
      \beta_k=\sqrt{\frac{s_k^\top y_k}{\|s_k\|^2}},
  \]
  and choose the scalar factor direction
  \[
      \widehat v_k(\tau_k)=\tau_k s_k+(1-\tau_k)y_k .
  \]
  We use the terms \(0\)-DFP and \(0\)-BFGS consistently to denote one-pair,
  zero-memory updates with scalar initialization determined by the current
  curvature pair.  The AdaTau variants use the adaptive self-scaled coordinate
  \(\tau_k=\|y_k\|/(\|s_k\|+\|y_k\|)\).

  \begin{table}[t]
  \centering
  \caption{DBS variants used in the numerical experiments.}
  \label{tab:dbs-variants}
  \small
  \setlength{\tabcolsep}{4pt}
  \begin{tabularx}{\textwidth}{lllX}
  \toprule
  Variant & Side & \(\tau\)-rule & Matrix-level member \\
  \midrule
  DBS-I-0DFP
  & inverse
  & fixed \(0\)
  & \(0\)-DFP inverse, \(H_0=\alpha_k^2 I\) \\

  DBS-H-0BFGS
  & Hessian
  & fixed \(1\)
  & \(0\)-BFGS Hessian, \(B_0=\beta_k^2 I\) \\

  DBS-I-AdaTau
  & inverse
  & \(\tau_k=\frac{\|y_k\|}{\|s_k\|+\|y_k\|}\)
  & self-scaled \(0\)-BFGS inverse, \(H_0=\alpha_k^2 I\) \\

  DBS-H-AdaTau
  & Hessian
  & \(\tau_k=\frac{\|y_k\|}{\|s_k\|+\|y_k\|}\)
  & self-scaled \(0\)-DFP Hessian, \(B_0=\beta_k^2 I\) \\
  \bottomrule
  \end{tabularx}
  \end{table}

 \subsection{Linear convergence under strong convexity}
  \label{sec:conv-rates}

  We next state the convergence guarantee for admissible DBS methods.  We first
  give the exact scaled-proximal oracle result.  Since the safeguarded oracle
  can solve the scaled proximal subproblem to any prescribed accuracy, we then
  record the corresponding finite-accuracy consequence.

  \begin{assumption}[Strong convexity and smoothness]
  \label{ass:strconv}
  The function \(f\) is \(L\)-smooth and \(\mu\)-strongly convex with \(\mu>0\),
  and \(h\) is proper, closed, and convex.
  \end{assumption}

  \begin{lemma}[One-step contraction in a bounded metric]
  \label{lem:metric-one-step}
  Let Assumption~\ref{ass:strconv} hold.  Suppose
  \(mI\preceq B^{-1}\preceq MI\), and define
  \[
      x^+
      =
      \prox_{\eta h}^{B}
      \bigl(x-\eta B^{-1}\nabla f(x)\bigr).
  \]
  If \(0<\eta\le1/(LM)\), then
  \[
      F(x^+)-F(x^*)
      \le
      (1-\eta\mu m)\bigl(F(x)-F(x^*)\bigr).
  \]
  \end{lemma}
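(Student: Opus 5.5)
The plan is the standard majorize-then-compare argument for proximal gradient methods, carried out in the \(B\)-metric. The eigenvalue bounds \(mI\preceq B^{-1}\preceq MI\) are used twice: once to majorize the smooth part, and once to bound the proximal quadratic at a convex-combination test point. First, by Corollary~\ref{cor:prox-opt} (or by expanding squares), \(x^+\) is the unique minimizer of the model
\[
    Q(y)=f(x)+\langle\nabla f(x),y-x\rangle+\frac{1}{2\eta}\|y-x\|_B^2+h(y),
\]
because \(\frac{1}{2\eta}\|y-x+\eta B^{-1}\nabla f(x)\|_B^2\) differs from the first-order-plus-quadratic part of \(Q\) only by a constant.

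Next I show \(F(x^+)\le Q(x^+)\). The descent lemma for \(L\)-smooth \(f\) gives \(f(y)\le f(x)+\langle\nabla f(x),y-x\rangle+\frac L2\|y-x\|^2\). Since \(B^{-1}\preceq MI\) means \(B\succeq M^{-1}I\), we have \(\|y-x\|^2\le M\|y-x\|_B^2\). Hence \(\frac L2\|y-x\|^2\le\frac{LM}{2}\|y-x\|_B^2\le\frac1{2\eta}\|y-x\|_B^2\) whenever \(\eta\le1/(LM)\). Thus \(F(y)\le Q(y)\) for all \(y\). In particular
\[
    F(x^+)\le Q(x^+)=\min_y Q(y).
\]

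Then I evaluate \(Q\) at \(y_\lambda=x+\lambda(x^*-x)\) for \(\lambda\in[0,1]\).
\begin{itemize}
\item For the linear part, \(\mu\)-strong convexity gives \(f(x)+\langle\nabla f(x),x^*-x\rangle\le f(x^*)-\frac\mu2\|x^*-x\|^2\). Therefore \(f(x)+\lambda\langle\nabla f(x),x^*-x\rangle\le(1-\lambda)f(x)+\lambda f(x^*)-\frac{\lambda\mu}{2}\|x^*-x\|^2\).
\item For \(h\), convexity gives \(h(y_\lambda)\le(1-\lambda)h(x)+\lambda h(x^*)\).
\item For the quadratic, \(B\preceq m^{-1}I\) gives \(\frac1{2\eta}\|y_\lambda-x\|_B^2\le\frac{\lambda^2}{2\eta m}\|x^*-x\|^2\).
\end{itemize}
Summing these,
\[
    Q(y_\lambda)\le(1-\lambda)F(x)+\lambda F(x^*)+\Bigl(\frac{\lambda^2}{2\eta m}-\frac{\lambda\mu}{2}\Bigr)\|x^*-x\|^2.
\]

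Finally I choose \(\lambda=\eta\mu m\), which makes the bracket vanish. Feasibility \(\lambda\le1\) follows from \(\eta\mu m\le\mu m/(LM)\le1\), using \(\mu\le L\) and \(m\le M\). Combining \(F(x^+)\le Q(y_\lambda)\) with the bound above and subtracting \(F(x^*)\) yields the claimed contraction factor \(1-\eta\mu m\).

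The only delicate point is tracking which eigenvalue bound enters where. The majorization step needs the lower bound on \(B\), i.e.\ \(M\), and that is what forces \(\eta\le1/(LM)\). The test-point step needs the upper bound on \(B\), i.e.\ \(m\), and that is what produces the rate \(\eta\mu m\). No other obstacle is expected.
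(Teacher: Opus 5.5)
Your proposal is correct and follows the paper's proof essentially step by step. Both majorize \(f\) in the \(B\)-metric using \(B\succeq M^{-1}I\) and \(\eta\le 1/(LM)\), compare \(F(x^+)\) with the model value at \(y=(1-\lambda)x+\lambda x^*\), and cancel the distance terms by taking \(\lambda=\eta\mu m\). Your explicit check that \(\lambda\le 1\), via \(\mu\le L\) and \(m\le M\), fills in a step the paper only asserts.
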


  \begin{proof}
  The metric bounds are equivalent to
  \(M^{-1}I\preceq B\preceq m^{-1}I\).  Since \(\eta\le1/(LM)\), we have
  \(f(x^+)\le f(x)+\langle\nabla f(x),x^+-x\rangle
  +(2\eta)^{-1}\|x^+-x\|_B^2\).  Hence by the definition of \(x^+\), we derive 
  \[
      F(x^+)
      \le
      f(x)+\langle\nabla f(x),y-x\rangle
      +\frac{1}{2\eta}\|y-x\|_B^2+h(y), \quad\forall y\in\mathbb{R}^d.
  \]
  Set \(\lambda=\eta\mu m\in[0,1]\) and \(y=(1-\lambda)x+\lambda x^*\).
  Convexity of
  \(h\) and \(\mu\)-strong convexity of \(f\) give
  \[
      f(x)+\langle\nabla f(x),y-x\rangle+h(y)
      \le
      (1-\lambda)F(x)+\lambda F(x^*)
      -\frac{\lambda\mu}{2}\|x-x^*\|^2 .
  \]
  Moreover
  \[
      \frac{1}{2\eta}\|y-x\|_B^2
      \le
      \frac{\lambda^2}{2\eta m}\|x-x^*\|^2
      =
      \frac{\lambda\mu}{2}\|x-x^*\|^2 .
  \]
  The last two displays cancel the distance term and yield
  \(F(x^+)\le(1-\lambda)F(x)+\lambda F(x^*)\), which is the stated
  contraction.
  \end{proof}

  \begin{theorem}[Linear convergence of admissible DBS]
  \label{th:linear}
  Let Assumption~\ref{ass:strconv} hold, and let \(\{x_k\}\) be generated by an
  admissible DBS method.  Suppose that the scaled proximal subproblems are
  solved exactly.  Then there exist constants \(0<m\le M<\infty\), independent of
  \(k\), such that
  \[
      mI\preceq B_k^{-1}\preceq MI,\qquad \forall k.
  \]
  If
  \[
      0<\eta\le \frac{1}{LM},
  \]
  then
  \[
      F(x_k)-F(x^*)
      \le
      (1-\eta\mu m)^k\bigl(F(x_0)-F(x^*)\bigr).
  \]
  \end{theorem}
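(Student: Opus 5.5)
The plan is to combine the uniform metric bounds of Lemma~\ref{lem:generic-pd} with the one-step contraction of Lemma~\ref{lem:metric-one-step}, and then telescope.

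\textbf{Step 1: uniform bounds on \(B_k^{-1}\) for both sides.} On the inverse side, \(B_k^{-1}=X_kX_k^\top=B_{k,I}^{-1}\), so the bounds \(m_II\preceq B_k^{-1}\preceq M_II\) follow directly from Lemma~\ref{lem:generic-pd}. On the Hessian side, \(B_k=B_{k,H}\) and Lemma~\ref{lem:generic-pd} gives \(m_HI\preceq B_k\preceq M_HI\). Inverting yields \(M_H^{-1}I\preceq B_k^{-1}\preceq m_H^{-1}I\). Iterations that skip the metric update or restart (the branches of Algorithm~\ref{alg:twosided} with \(B_k=I\)), together with the first step, use \(B^{-1}=I\). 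I will therefore take \(m=\min\{m_I,M_H^{-1},1\}\) and \(M=\max\{M_I,m_H^{-1},1\}\). These constants are independent of \(k\), which proves the first claim.

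\textbf{Step 2: per-step contraction.} Since the subproblems are solved exactly, each iterate satisfies \(x_{k+1}=\prox_{\eta h}^{B_k}(x_k-\eta B_k^{-1}\nabla f(x_k))\). For the inverse side, the factor-coordinate identity \(\prox_{\eta h}^{B}=X\circ\prox_{\eta h\circ X}\circ X^{-1}\) from Section~\ref{sec:factorization-search}, applied with input \(X_k^{-1}x_k-\eta X_k^\top g_k=X_k^{-1}(x_k-\eta B_k^{-1}g_k)\), confirms this form. For the Hessian side, the same identity applies with \(X=Y_k^{-\top}\). Because \(\eta\le1/(LM)\) and \(mI\preceq B_k^{-1}\preceq MI\), Lemma~\ref{lem:metric-one-step} gives
\[
F(x_{k+1})-F(x^*)\le(1-\eta\mu m)\bigl(F(x_k)-F(x^*)\bigr)
\]
for every \(k\). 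We also need \(1-\eta\mu m\in[0,1)\). This follows from \(\eta\mu m\le \mu m/(LM)\le1\), since \(\mu\le L\) and \(m\le M\).

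\textbf{Step 3: induction.} Inducting from \(k=0\) gives the bound \((1-\eta\mu m)^k(F(x_0)-F(x^*))\).

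\textbf{Main obstacle.} The main difficulty is bookkeeping rather than analysis. The iterates of Algorithm~\ref{alg:twosided} mix several kinds of steps: the initial plain proximal step, skip/restart steps, and the two factor sides. Each of these must be covered by a single pair \((m,M)\), and this is why 1 is included in the min and max in Step~1. A further point is that the Hessian-side bound must be inverted. Finally, if the algorithm terminates early at a certified stationary point, the bound is only claimed for the iterates actually generated.
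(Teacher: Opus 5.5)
Your proposal is correct and follows the paper's proof. The paper obtains uniform bounds on $B_k^{-1}$ from Lemma~\ref{lem:generic-pd}, inverting the Hessian-side bounds, then applies the one-step contraction of Lemma~\ref{lem:metric-one-step} and iterates; your extra bookkeeping (enlarging $[m,M]$ to contain $1$ so the initial, skip, and restart steps with $B=I$ are covered, and checking $\eta\mu m\le 1$) is a harmless refinement the paper leaves implicit.
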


  \begin{proof}
  The inverse-side bounds follow directly from Lemma~\ref{lem:generic-pd}.  On
  the Hessian side, Lemma~\ref{lem:generic-pd} gives
  \(mI\preceq B_k\preceq MI\), equivalently
  \(M^{-1}I\preceq B_k^{-1}\preceq m^{-1}I\).  Relabeling the reciprocal
  constants gives the inverse-metric bounds stated in the theorem.
  Lemma~\ref{lem:metric-one-step}, applied with \(B=B_k\), gives for
  \(\eta\le1/(LM)\)
  \[
      F(x_{k+1})-F(x^*)
      \le
      (1-\eta\mu m)\bigl(F(x_k)-F(x^*)\bigr).
  \]
  The result is obtained by iteration.
  \end{proof}

  For finite-accuracy implementations, define the local scaled proximal model
  \[
      Q_k(x)
      =
      \langle g_k,x-x_k\rangle
      +
      \frac{1}{2\eta}\|x-x_k\|_{B_k}^2
      +
      h(x).
  \]

  \begin{corollary}[Finite-accuracy DBS]
  \label{cor:finite-accuracy}
  Let the assumptions of Theorem~\ref{th:linear} hold and let
  \(q=1-\eta\mu m\).  Suppose that the computed iterate satisfies the model
  accuracy condition
  \[
      Q_k(x_{k+1})
      \le
      \inf_x Q_k(x)+\varepsilon_k .
  \]
  If \(0<\eta\le 1/(LM)\), then
  \[
      F(x_{k+1})-F(x^*)
      \le
      q\bigl(F(x_k)-F(x^*)\bigr)+\varepsilon_k .
  \]
  Consequently,
  \[
      F(x_K)-F(x^*)
      \le
      q^K\bigl(F(x_0)-F(x^*)\bigr)
      +
      \sum_{j=0}^{K-1} q^{K-1-j}\varepsilon_j .
  \]
  In particular, if
  \[
      q^K\bigl(F(x_0)-F(x^*)\bigr)\le \frac{\epsilon_{\rm out}}{2},
      \qquad
      \varepsilon_j\le \frac{(1-q)\epsilon_{\rm out}}{2},
  \]
  then \(F(x_K)-F(x^*)\le \epsilon_{\rm out}\).
  \end{corollary}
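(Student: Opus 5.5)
The plan is to rerun the proof of Lemma~\ref{lem:metric-one-step} with the exact minimizer of \(Q_k\) replaced by the \(\varepsilon_k\)-approximate minimizer \(x_{k+1}\).

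First, Theorem~\ref{th:linear} gives \(mI\preceq B_k^{-1}\preceq MI\), so \(B_k\succeq M^{-1}I\). With \(\eta\le 1/(LM)\) this yields \(L\|d\|^2\le \eta^{-1}\|d\|_{B_k}^2\). Combining this with the descent lemma for \(f\) gives
\[
    F(x_{k+1})\le f(x_k)+Q_k(x_{k+1}).
\]
The accuracy condition then gives
\[
    F(x_{k+1})\le f(x_k)+Q_k(y)+\varepsilon_k
    \quad\text{for every } y\in\mathbb R^d.
\]
Here we use \(\inf_x Q_k\le Q_k(y)\), which is valid whether or not the infimum is attained, although it is attained by strong convexity.

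Next, choose \(y=(1-\lambda)x_k+\lambda x^*\) with \(\lambda=\eta\mu m\). First check that \(\lambda\le 1\): we have \(\mu\le L\) and \(m\le M\), so \(\eta\mu m\le \mu m/(LM)\le1\). Convexity of \(h\) and strong convexity of \(f\) give
\[
    f(x_k)+\langle g_k,y-x_k\rangle+h(y)
    \le (1-\lambda)F(x_k)+\lambda F(x^*)-\tfrac{\lambda\mu}{2}\|x_k-x^*\|^2 .
\]
The bound \(B_k\preceq m^{-1}I\) gives
\[
    \frac{1}{2\eta}\|y-x_k\|_{B_k}^2\le \frac{\lambda^2}{2\eta m}\|x_k-x^*\|^2=\frac{\lambda\mu}{2}\|x_k-x^*\|^2 .
\]
The two distance terms cancel, leaving \(F(x_{k+1})\le(1-\lambda)F(x_k)+\lambda F(x^*)+\varepsilon_k\). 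Subtracting \(F(x^*)\) gives the one-step inequality.

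The multi-step bound follows by induction on \(K\), unrolling \(e_{k+1}\le q e_k+\varepsilon_k\) with \(e_k=F(x_k)-F(x^*)\). For the final claim, bound the geometric sum:
\[
    \sum_{j<K} q^{K-1-j}\varepsilon_j\le \frac{(1-q)\epsilon_{\rm out}}{2}\sum_{i\ge0}q^i=\frac{\epsilon_{\rm out}}{2},
\]
using \(q\in[0,1)\). Adding this to the first term gives at most \(\epsilon_{\rm out}\). If \(q=0\), the sum reduces to its last term, and the bound still holds.

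The only delicate points are bookkeeping ones. The first is that the metric bounds are stated for \(B_k^{-1}\), so they must be inverted to \(M^{-1}I\preceq B_k\preceq m^{-1}I\) consistently in both uses. The second is confirming that the quasi-Newton steps fall under this analysis; there is no real obstacle here. The scalar restart and skip steps of Algorithm~\ref{alg:twosided} are covered as well: they correspond to \(B=I\), which fits the same argument with \(m=M=1\), or within \([m,M]\) after adjusting the constants.
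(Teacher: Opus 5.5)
Your proof is correct and follows the paper's route: the paper's proof says only that the argument of Theorem~\ref{th:linear} (that is, Lemma~\ref{lem:metric-one-step}) goes through with \(\varepsilon_k\) added to the one-step descent inequality, after which the linear recursion is unrolled. You carry this out in full, and you also supply details the paper leaves implicit, namely the check that \(\lambda=\eta\mu m\le 1\) and the geometric-sum bound \(\sum_{j<K}q^{K-1-j}\varepsilon_j\le \epsilon_{\rm out}/2\) behind the final claim.
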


  \begin{proof}
  The proof is the same as that of Theorem~\ref{th:linear}, except that the
  approximate solution of the model \(Q_k\) adds the error term
  \(\varepsilon_k\) to the one-step descent inequality.  Iterating the resulting
  linear recursion gives the stated bound.
  \end{proof}

  The next proposition converts the accuracy certificate returned by the scaled
  proximal oracle into the model accuracy \(\varepsilon_k\) required by
  Corollary~\ref{cor:finite-accuracy}.

  \begin{proposition}[Oracle accuracy to model accuracy]
  \label{prop:loc-to-model}
  Let the assumptions of Theorem~\ref{th:linear} hold, so that
  \(mI\preceq B_k^{-1}\preceq MI\), and let \(x_k^\star=\arg\min_x Q_k(x)\) be
  the exact scaled-proximal step.  Suppose the scaled proximal oracle returns
  \(x_{k+1}\) with a scalar localization certificate
  \(\|(t,\xi)-(t^*,\xi^*)\|\le\delta_{\rm loc}\), and hence,
  by~\eqref{eq:z-lipschitz},
  \(\|x_{k+1}-x_k^\star\|\le C_{X,k}\delta_{\rm loc}\).
  If \(h\) is Lipschitz with constant \(L_{h,k}\) on the segment joining
  \(x_{k+1}\) and \(x_k^\star\), then
  \[
      Q_k(x_{k+1})-\inf_x Q_k(x)
      \le
      C_{M,k}\,\delta_{\rm loc}
      +
      \frac{C_{X,k}^2}{2\eta m}\,\delta_{\rm loc}^2 ,
  \]
  where
  \[
      C_{M,k}
      =
      C_{X,k}\left(
      \left\|g_k+\frac1\eta B_k(x_k^\star-x_k)\right\|
      +L_{h,k}\right).
  \]
  In particular, if these constants are uniformly bounded along the run, then
  choosing \(\delta_{\rm loc}=\mathcal O(\varepsilon_{\rm model})\) yields
  \(\varepsilon_k=Q_k(x_{k+1})-\inf_x Q_k(x)\le\varepsilon_{\rm model}\).
  \end{proposition}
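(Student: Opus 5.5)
We split the model gap into the smooth quadratic part and the nonsmooth part of \(Q_k\), and bound each using the displacement \(\Delta=x_{k+1}-x_k^\star\), for which \(\|\Delta\|\le C_{X,k}\delta_{\rm loc}\) by~\eqref{eq:z-lipschitz} as stated in the hypothesis. Write \(Q_k=\phi_k+h\) with
\[
    \phi_k(x)=\langle g_k,x-x_k\rangle+\frac{1}{2\eta}\|x-x_k\|_{B_k}^2 .
\]
Since \(\inf_x Q_k=Q_k(x_k^\star)\), the quantity to bound is
\(\bigl(\phi_k(x_{k+1})-\phi_k(x_k^\star)\bigr)+\bigl(h(x_{k+1})-h(x_k^\star)\bigr)\).

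\textbf{Quadratic part.} \(\phi_k\) is quadratic, so its expansion about \(x_k^\star\) is exact:
\[
    \phi_k(x_{k+1})-\phi_k(x_k^\star)
    =\Bigl\langle g_k+\tfrac1\eta B_k(x_k^\star-x_k),\Delta\Bigr\rangle
    +\frac{1}{2\eta}\|\Delta\|_{B_k}^2 .
\]
Cauchy--Schwarz bounds the linear term by \(\|g_k+\eta^{-1}B_k(x_k^\star-x_k)\|\,C_{X,k}\delta_{\rm loc}\). The bound \(mI\preceq B_k^{-1}\) is equivalent to \(B_k\preceq m^{-1}I\). Hence the quadratic term is at most \(\frac{1}{2\eta m}\|\Delta\|^2\le \frac{C_{X,k}^2}{2\eta m}\delta_{\rm loc}^2\).

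\textbf{Nonsmooth part.} \(h\) is \(L_{h,k}\)-Lipschitz on the segment \([x_k^\star,x_{k+1}]\). Therefore
\[
    h(x_{k+1})-h(x_k^\star)\le L_{h,k}\|\Delta\|\le L_{h,k}C_{X,k}\delta_{\rm loc}.
\]
Adding the two parts gives exactly \(C_{M,k}\delta_{\rm loc}+\frac{C_{X,k}^2}{2\eta m}\delta_{\rm loc}^2\).

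We deliberately avoid exploiting the optimality condition \(0\in\nabla\phi_k(x_k^\star)+\partial h(x_k^\star)\) to cancel the linear term against a subgradient of \(h\). That route would need a subgradient bound for \(h\) at \(x_k^\star\) anyway, and the Lipschitz hypothesis already supplies this directly. This is why the gradient norm appears uncancelled in \(C_{M,k}\).

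\textbf{Final claim and main obstacle.} If \(C_{M,k}\), \(C_{X,k}\) and \(L_{h,k}\) are uniformly bounded by constants \(\bar C_M,\bar C_X\), the gap is at most \(\bar C_M\delta_{\rm loc}+\frac{\bar C_X^2}{2\eta m}\delta_{\rm loc}^2\). Choosing
\[
    \delta_{\rm loc}\le\min\Bigl\{\frac{\varepsilon_{\rm model}}{2\bar C_M},\ \sqrt{\frac{\eta m\,\varepsilon_{\rm model}}{\bar C_X^2}}\Bigr\}
\]
makes each term at most \(\varepsilon_{\rm model}/2\). For \(\varepsilon_{\rm model}\) small this is \(\mathcal O(\varepsilon_{\rm model})\), so \(\varepsilon_k\le\varepsilon_{\rm model}\). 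The main subtlety is confirming that the hypothesis's transfer from the scalar certificate to \(\|x_{k+1}-x_k^\star\|\) is legitimate. Lemma~\ref{lem:scalar-localization} bounds the error in \(z\)-coordinates, and \(x=X_kz\) (or \(Y_k^{-\top}z\)) adds a factor \(\|X_k\|\). Under admissibility this factor is uniformly bounded by Lemma~\ref{lem:generic-pd}, so it can be absorbed into \(C_{X,k}\) without affecting the argument.
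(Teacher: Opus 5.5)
Your proof is correct and follows the paper's argument essentially step for step: the exact expansion of the quadratic part of \(Q_k\) about \(x_k^\star\), Cauchy--Schwarz on the linear term, \(\|B_k\|\le 1/m\) on the quadratic term, and the local Lipschitz bound on \(h\). Your side remark concerns passing from \(z\)-coordinates to \(x=X_kz\) (or \(Y_k^{-\top}z\)), which introduces a factor \(\|X_k\|\) (resp.\ \(\|Y_k^{-\top}\|\)) that is uniformly bounded under admissibility and can be absorbed into \(C_{X,k}\); this is a fair clarification of a step the paper's statement glosses over.
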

  \begin{proof}
  The bounds \(mI\preceq B_k^{-1}\preceq MI\) give
  \((1/M)I\preceq B_k\), so \(Q_k\) is \((1/(\eta M))\)-strongly convex with
  unique minimizer \(x_k^\star\), and also \(\|B_k\|\le1/m\).  Put
  \(e=x_{k+1}-x_k^\star\).  Expanding the quadratic part of \(Q_k\) around
  \(x_k^\star\) gives
  \[
  \begin{aligned}
      Q_k(x_{k+1})-Q_k(x_k^\star)
      &=
      \left\langle
      g_k+\frac1\eta B_k(x_k^\star-x_k),e
      \right\rangle
      +\frac{1}{2\eta}\|e\|_{B_k}^2  \\
      &\quad +h(x_{k+1})-h(x_k^\star).
  \end{aligned}
  \]
  The local Lipschitz bound on \(h\), the estimate
  \(\|e\|\le C_{X,k}\delta_{\rm loc}\), and \(\|B_k\|\le1/m\) yield the displayed
  value-error estimate.  If the constants in that estimate are uniform, the
    stated choice of \(\delta_{\rm loc}\) gives the model accuracy required by
    Corollary~\ref{cor:finite-accuracy}.
    \end{proof}

    \begin{corollary}[Relative localization schedule]
    \label{cor:relative-localization}
    Let the assumptions of Corollary~\ref{cor:finite-accuracy} and
    Proposition~\ref{prop:loc-to-model} hold, with the constants in
    Proposition~\ref{prop:loc-to-model} uniformly bounded.  Suppose the
    localization tolerance is chosen as
    \[
        \delta_{{\rm loc},k}
        =
        \max\{\delta_{\rm abs},\nu_{\rm rel}\Delta_{k-1}\},
    \]
    where \(\nu_{\rm rel}>0\), \(\delta_{\rm abs}\ge0\), and the predictor scale
    satisfies \(\Delta_{k}\le C_\Delta\sigma^k\) for some
    \(C_\Delta>0\) and \(\sigma\in(0,1)\).  Then the finite-accuracy recursion
    in Corollary~\ref{cor:finite-accuracy} holds with
    \[
        \varepsilon_k
        \le
        \bar C\sigma^k
        +
        C_{\rm abs}(\delta_{\rm abs}+\delta_{\rm abs}^2)
    \]
    for constants \(\bar C,C_{\rm abs}\) independent of \(k\).  Consequently,
   if \(\delta_{\rm abs}=0\), the objective residual still decays
    geometrically, governed by the slower factor among \(q\) and \(\sigma\).
    If \(\delta_{\rm abs}>0\), the same linear
    decay holds down to an
    \(\mathcal O(\delta_{\rm abs}+\delta_{\rm abs}^2)\) accuracy floor.
    \end{corollary}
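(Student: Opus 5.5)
The proof plugs the localization tolerance \(\delta_{{\rm loc},k}\) into the value-error bound of Proposition~\ref{prop:loc-to-model}, which produces the stated form of \(\varepsilon_k\). The resulting geometric-plus-constant error sequence is then fed into the linear recursion of Corollary~\ref{cor:finite-accuracy}.

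\emph{Bounding \(\varepsilon_k\).} Let \(\bar C_M\ge C_{M,k}\) and \(\bar C_2\ge C_{X,k}^2/(2\eta m)\) be the uniform bounds assumed along the run. Proposition~\ref{prop:loc-to-model} gives
\[
\varepsilon_k\le \bar C_M\delta_{{\rm loc},k}+\bar C_2\delta_{{\rm loc},k}^2 .
\]
For nonnegative \(a,b\) we have \(\max\{a,b\}\le a+b\) and \(\max\{a,b\}^2\le a^2+b^2\). Hence
\[
\varepsilon_k\le \bar C_M\nu_{\rm rel}\Delta_{k-1}+\bar C_2\nu_{\rm rel}^2\Delta_{k-1}^2+\bar C_M\delta_{\rm abs}+\bar C_2\delta_{\rm abs}^2 .
\]
Next use \(\Delta_{k-1}\le C_\Delta\sigma^{k-1}\) and \(\sigma^{2(k-1)}\le\sigma^{k-1}\). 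This gives \(\varepsilon_k\le \bar C\sigma^k+C_{\rm abs}(\delta_{\rm abs}+\delta_{\rm abs}^2)\) with
\[
\bar C=\sigma^{-1}\bigl(\bar C_M\nu_{\rm rel}C_\Delta+\bar C_2\nu_{\rm rel}^2C_\Delta^2\bigr),\qquad C_{\rm abs}=\max\{\bar C_M,\bar C_2\}.
\]
The first index \(k=0\) (or \(k=1\)) needs \(\Delta_{-1}\) or the initial step; I would absorb it by enlarging \(\bar C\) or by adopting the convention \(\Delta_{-1}=\Delta_0\).

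\emph{Summing the recursion.} Insert this bound into the accumulated estimate of Corollary~\ref{cor:finite-accuracy}:
\[
F(x_K)-F^*\le q^K(F(x_0)-F^*)+\bar C\sum_{j=0}^{K-1}q^{K-1-j}\sigma^j+C_{\rm abs}(\delta_{\rm abs}+\delta_{\rm abs}^2)\sum_{j}q^{K-1-j}.
\]
The last sum is at most \(1/(1-q)\), which gives the \(\mathcal O(\delta_{\rm abs}+\delta_{\rm abs}^2)\) floor. For the convolution sum, set \(\kappa=\max\{q,\sigma\}\).
\begin{itemize}
\item If \(q\neq\sigma\), the sum equals \((q^K-\sigma^K)/(q-\sigma)\le K\kappa^{K-1}\).
\item In every case the sum is at most \(K\kappa^{K-1}\).
\end{itemize}
This is geometric with any rate \(\kappa'\in(\kappa,1)\), since \(K\kappa^{K-1}\le C'\kappa'^K\). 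When \(q\ne\sigma\) the bound \(\kappa^K/|q-\sigma|\) gives the exact rate \(\kappa\). So the residual decays at the slower of the two factors, down to the floor; when \(\delta_{\rm abs}=0\) the floor vanishes.

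\emph{Main obstacle.} The only delicate point is justifying that the uniform constants of Proposition~\ref{prop:loc-to-model} apply at every \(k\). This requires the iterates and exact steps \(x_k^\star\) to stay in a bounded set, so that \(L_{h,k}\), \(\|g_k+B_k(x_k^\star-x_k)/\eta\|\) and \(C_{X,k}\) are bounded. Here I would simply invoke the corollary's hypothesis that these constants are uniformly bounded, and only remark that \(C_{X,k}\) is uniformly bounded by admissibility (the estimates in the proof of Lemma~\ref{lem:generic-pd}). A second minor point is the marginal case \(q=\sigma\), where the polynomial factor forces us to use an arbitrarily slightly slower geometric rate.
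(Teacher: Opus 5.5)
Your proposal is correct and follows the paper's proof: you bound \(\varepsilon_k\le C_1\delta_{{\rm loc},k}+C_2\delta_{{\rm loc},k}^2\) using the uniform constants of Proposition~\ref{prop:loc-to-model}, dominate \(\delta_{{\rm loc},k}\) by \(\delta_{\rm abs}+\nu_{\rm rel}C_\Delta\sigma^{k-1}\), and substitute into the convolution formula of Corollary~\ref{cor:finite-accuracy}. Three of your details are sharper than the paper, which covers them only with ``after increasing the constants'' and ``a weighted sum of two geometric sequences'': you use \(\max\{a,b\}^2\le a^2+b^2\) to avoid the cross term, you handle the initial index, and you note that when \(q=\sigma\) the sum is \(K\max\{q,\sigma\}^{K-1}\), so the rate is only slightly slower than \(\max\{q,\sigma\}\).
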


    \begin{proof}
    Uniform boundedness in Proposition~\ref{prop:loc-to-model} gives
    \(\varepsilon_k\le C_1\delta_{{\rm loc},k}+C_2\delta_{{\rm loc},k}^2\).
    Since \(\delta_{{\rm loc},k}\le
    \delta_{\rm abs}+\nu_{\rm rel}C_\Delta\sigma^{k-1}\), the displayed
    estimate follows after increasing the constants.  Substituting this bound
    into the convolution formula in Corollary~\ref{cor:finite-accuracy} gives a
    weighted sum of two geometric sequences plus the constant floor term.
    \end{proof}

    \subsection{Computational cost}
    \label{sec:overall-complexity}

  Each DBS iteration requires one gradient evaluation, a constant number of
  rank-one matrix-vector products, and one scaled proximal computation.  The
  rank-one linear algebra costs \(\mathcal O(d)\).  Let \(T_g\) denote the
  cost of one gradient evaluation and let \(T_p\) denote the cost of one
  diagonal-metric proximal evaluation required by the scaled proximal solve.
  For localization tolerance \(\delta_{\rm loc}\),
  Proposition~\ref{prop:safeguard} gives the per-iteration cost
  \[
      T_g+
      \mathcal O\bigl((d+T_p)
      \log(R_t/\delta_{\rm loc})
      \log(R_{\xi}/\delta_{\rm loc})\bigr)
  \]
  for the safeguarded oracle, which is
  \(T_g+\mathcal O((d+T_p)\log^2(1/\delta_{\rm loc}))\) for fixed bracket
  radii.  In the accelerator branch it becomes
  \(T_g+\mathcal O((d+T_p)\log(1/\delta_{\rm loc}))\) when the localization
  phase certifies diameter reduction.  In the scalar specialization, \(T_p\) is
  the cost of one ordinary proximal evaluation of \(h\).

  Set \(L_{\rm out}=\log(1/\epsilon_{\mathrm{out}})\).  For a target outer
  accuracy \(\epsilon_{\mathrm{out}}\), the finite-accuracy bound is obtained
  by taking the model error
  \(\varepsilon_{\mathrm{model}}
  =\mathcal O((1-q)\epsilon_{\mathrm{out}})\), where
  \(q=1-\eta\mu m\).  Proposition~\ref{prop:loc-to-model} translates this model
  tolerance into a localization tolerance with the same logarithmic scale.
  With fixed bracket radii, this gives the following simplified outer
  complexity.
  Since Corollary~\ref{cor:finite-accuracy} requires
  \(K=\mathcal O(L_{\rm out})\) outer iterations, the total arithmetic cost is
  \[
      \mathcal O\left(
      \bigl(T_g+(d+T_p)L_{\rm out}^2\bigr)L_{\rm out}
      \right)
  \]
  for the safeguarded oracle, and
  \[
      \mathcal O\left(
      \bigl(T_g+(d+T_p)L_{\rm out}\bigr)L_{\rm out}
      \right)
  \]
  in the same certified accelerator branch.
  If \(T_g=\mathcal O(d)\) and \(T_p=\mathcal O(d)\), these
  simplify to \(\mathcal O\bigl(d\log^3(1/\epsilon_{\mathrm{out}})\bigr)\) and
  \(\mathcal O\bigl(d\log^2(1/\epsilon_{\mathrm{out}})\bigr)\), respectively.

\section{Numerical Experiments}
\label{sec:experiments}

Our experiments test whether diagonal-plus-rank-one Broyden metrics can drive
proximal quasi-Newton steps without letting the scaled proximal subproblem
dominate the per-iteration cost.  The study proceeds at two levels.  The first
isolates the rank-one scaled proximal oracle on SLOPE/OWL subproblems, where
the ordinary proximal map is available but the scaled proximal map is
nontrivial.  The second embeds the same oracle in the full outer algorithm and
measures the iterations, gradient evaluations, and wall-clock time needed to
reach prescribed objective-residual targets.  The outer benchmarks span
SLOPE and group-lasso logistic regression, each on synthetic and real-data
instances, covering both structured nonseparable and
block-separable penalties.

The baselines are chosen to match this oracle model.  FISTA-L and MFISTA-L are
standard accelerated proximal-gradient methods using ordinary proximal
evaluations and Lipschitz-normalized stepsizes.  Prox-BB is a spectral
proximal-gradient method based on Barzilai--Borwein stepsizes and the standard
nonmonotone BB globalization philosophy
\cite{barzilai1988two,raydan1997barzilai,birgin2000nonmonotone}.  The 0SR1
baseline is the zero-memory symmetric-rank-one proximal quasi-Newton method
following Becker and Fadili~\cite{becker2012quasi}.

All methods were implemented in Python 3.12.3.  All experiments in this
section were run on an 8-core Arm Neoverse-N1
(AWS Graviton2) instance with 32 GB RAM using four BLAS threads; all methods
within each table are compared on identical hardware and harness.

\subsection{Experimental Setup}

The training set is denoted by \(\{(a_i,b_i)\}_{i=1}^n\), where
\(a_i\in\mathbb{R}^d\) and \(b_i\in\{-1,1\}\). We solve
\[
    \min_{x\in\mathbb{R}^d} F(x):=f(x)+h(x),
\]
with
\[
    f(x)
    =
    \frac1n\sum_{i=1}^n
    \log\bigl(1+\exp(-b_i a_i^\top x)\bigr)
    +
    \frac{\mu_{\rm r}}{2}\|x\|^2 .
\]
For the SLOPE experiments, the nonsmooth term is the sorted \(\ell_1\)
(SLOPE/OWL) penalty
\[
    h(x)=\sum_{j=1}^d w_j |x|_{(j)},\qquad
    w_1\ge w_2\ge\cdots\ge w_d\ge0,
\]
where \(|x|_{(1)}\ge\cdots\ge |x|_{(d)}\) are the sorted absolute values.  Its
ordinary proximal map is computed by sorting and isotonic regression in
\(\mathcal O(d\log d)\) time.  Thus SLOPE provides a structured test case in
which the ordinary proximal map is efficient, while generalized derivative
information for scaled proximal subproblems is active-block and sorting
dependent.  The group-lasso penalty is specified in
Section~\ref{subsec:group-lasso-logistic}.

The feature matrix is generated from an AR(1) model:
\[
    A_{i1}=\xi_{i1},\qquad
    A_{ij}=\rho A_{i,j-1}+\sqrt{1-\rho^2}\,\xi_{ij},
    \quad j\ge2,
\]
where \(\xi_{ij}\sim N(0,1)\).  The parameter \(\rho\in[0,1)\) controls feature
correlation.  Larger \(\rho\) gives a more ill-conditioned design.  A sparse
ground-truth vector with decreasing nonzero magnitudes is generated, and labels
are formed as
\[
    b_i=\operatorname{sign}
    \left(
    \frac{a_i^\top x_{\rm true}}{\sqrt{|\operatorname{supp}(x_{\rm true})|}}
    +0.35\,\varepsilon_i
    \right),
    \qquad \varepsilon_i\sim N(0,1).
\]
The data are split into \(80\%\) training and \(20\%\) testing, and features are
standardized using the training set statistics.

The SLOPE outer experiments use random seeds
\(\{42,43,44\}\), \(w_0=2\times10^{-4}\) in the
weight sequence \(w_j=w_0\sqrt{2\log(2d/j)}\), and \(\mu_{\rm r}=10^{-3}\).
The outer iteration caps are \(800\) for DBS, 0SR1, and the no-warm-start DBS
ablation, and \(2500\) for the first-order and BB baselines.
The Lipschitz estimate uses \(30\) power iterations, with early termination
only if the iterate norm falls below \(10^{-12}\).

In the outer optimization experiments, DBS, 0SR1, and Prox-BB are
tuned over the tested stepsize grid directly.  For FISTA-L and MFISTA-L, the
same grid is used as a multiplier of the inverse Lipschitz estimate, giving
stepsizes \(\eta/L\), where
\[
    L\approx \frac{\lambda_{\max}(A^\top A)}{4n}+\mu_{\rm r}
\]
is estimated by power iteration.  The scaled proximal subproblems of DBS and
0SR1 are solved by the safeguarded oracle with a budget of \(30\)
two-dimensional localization steps before
invoking the certified bilevel fallback.  In the outer benchmarks, DBS uses
the warm-started oracle of Section~\ref{sub:bilevel-search} with
\(\nu_{\rm ws}=4\), warm budget \(12\), \(\nu_{\rm rel}=0.25\), and
\(\delta_{\rm abs}=10^{-6}\).  The 0SR1 baseline uses the same warm-started
scaled-proximal oracle, giving a matched-oracle comparison of the two metrics.
Wall-clock and proximal-call diagnostics
include all localization and fallback work.

The DBS and 0SR1 metric updates use the same admissibility parameters,
\[
    \theta_1=0.01,\qquad \theta_2=100,
\]
throughout the experiments.  The default outer tests use \(n=10000\) and
\(d=2000\).

Unless otherwise stated, the outer experiments report medians over three
random seeds.  For each \(\rho\) and seed, all methods start from \(x_0=0\) and
use the same generated training/test split.  Each experiment specifies its
tuning grid, and methods with a common tuning parameter use the same grid.  All
reported objective residuals are computed on the training objective.

The value \(F_{\rm ref}\) serves as the best-observed numerical reference for
computing objective residuals.  For each instance, we first run a Prox-BB
reference solve from \(x_0=0\) with initial stepsize \(1\), tolerance
\(10^{-12}\), and a maximum of \(800\) iterations.  For the SLOPE outer
benchmark, where targets down to \(5\times10^{-7}\) are reported, we additionally run
a high-accuracy warm-started DBS solve (\(1200\) outer iterations, absolute
inner floor \(10^{-7}\)) as a second reference.  We then set
\(F_{\rm ref}\) to the smallest of these reference values and the best
value found by any benchmarked run over the tested stepsize grid.  Accordingly, we report
\[
    r_k=[F(x_k)-F_{\rm ref}]_+,
\]
and compare methods by the first iteration \(k\) for which
\(r_k\le\varepsilon_{\rm obj}\).  For each method, seed, and correlation level,
the stepsize is selected from the tested grid by minimizing the number of
iterations needed to reach the target.  If no tested stepsize reaches the
target within the prescribed outer budget, the method is reported as not
reached.  We use the two targets
\(\varepsilon_{\rm obj}\in\{10^{-4},5\times10^{-7}\}\) for the high-correlation
SLOPE outer benchmark, with the stepsize selected separately for each target,
and \(\varepsilon_{\rm obj}=10^{-6}\) for the
group-lasso end-to-end benchmark.  In the target-stopping tables, the column
``Cap'' gives the maximum number of outer iterations allowed for each method;
low-cost first-order and BB baselines are allowed larger caps where indicated.

\subsection{SLOPE Scaled Proximal Oracles}

We first isolate the scaled proximal computation by fixing the metric and
solving the same inverse-side AdaTau SLOPE subproblem for all methods.  The
factor is the one used by the default outer DBS implementation and has the form
\[
    X=\alpha I+uv^\top,\qquad B^{-1}=XX^\top .
\]
The comparison uses the ordinary-proximal-oracle model: each solver may call
the ordinary SLOPE proximal map and evaluate the scaled quadratic model.  The
baselines are FISTA, a strongly convex accelerated proximal-gradient method
(APG-SC), and an inner Prox-BB solver.  All methods start from the same
subproblem center and use a maximum budget of \(600\) inner iterations.  This
is a pointwise subproblem test, so its reference is not the outer objective
reference \(F_{\rm ref}\).  Instead, a high-accuracy run of the safeguarded DBS
oracle supplies a numerical reference point for the scaled proximal map.
Success requires both relative distance at most \(10^{-5}\) to this reference
point and fixed-point residual at most \(10^{-6}\).  The first condition checks
agreement with the high-accuracy solution, while the second checks the
optimality equations directly.  The generic baselines test full-dimensional
first-order inner iterations for the same scaled proximal problem; the DBS
oracle instead uses the factorization to reduce the computation to the
two-dimensional scalar system developed above.

Table~\ref{tab:slope-oracle} reports the representative high-dimensional case
\(d=5000\), with medians over three trials.  The rank-one strength \(r\)
controls the metric conditioning.  In this construction
\(\kappa(B)\approx 4r^2+2\), so the reported instances range from
\(4.0\times 10^4\) to \(4.0\times 10^6\).  The cost column reports ordinary
SLOPE proximal evaluations, the common oracle unit in this experiment;
fixed-point residuals are used only for the common success check above.
DBS-I-AdaTau succeeds in every trial and reaches reference-level accuracy
using \(338\)--\(473\) ordinary proximal evaluations, most of them spent in
the certified bilevel stage that drives the scalar-system residual to the
\(10^{-9}\) level.  The generic accelerated solvers exhaust the
\(600\)-iteration budget and remain several orders of magnitude less accurate
in the fixed-point residual.  Inner-ProxBB is the strongest generic
competitor: with spectral stepsize bounds for the scaled quadratic model it
meets the success thresholds at the two moderate conditionings in about
\(105\)--\(108\) proximal evaluations, but at
\(\kappa(B)\approx4\times10^6\) it fails one trial and its median cost rises
to about \(1200\) proximal evaluations and \(12.6\) seconds, against
\(0.19\) seconds for DBS.  The timing difference is also affected by the
linear algebra: the generic inner solvers apply the dense scaled metric,
whereas DBS uses the diagonal-plus-rank-one factorization.

\begin{table}[t]
\centering
\caption{Scaled proximal oracle comparison for the SLOPE penalty with
\(d=5000\) and the inverse-side AdaTau factor used by the default outer DBS
method. Prox reports ordinary SLOPE proximal evaluations, and Time is reported
in seconds.}
\label{tab:slope-oracle}
\scriptsize
\setlength{\tabcolsep}{2pt}
\begin{tabular}{@{}cclccccc@{}}
\toprule
\(r\) & \(\kappa(B)\) & Method & Succ. & Rel. err. & FP res.
& Prox & Time \\
\midrule
100  & \(4.00{\times}10^4\) & DBS-I-AdaTau & \(3/3\)
& \(4.73{\times}10^{-9}\) & \(5.10{\times}10^{-8}\) & 473 & 0.261 \\
100  & \(4.00{\times}10^4\) & FISTA  & \(0/3\)
& \(2.20{\times}10^{-5}\) & \(1.37{\times}10^{-4}\) & 1202 & 5.359 \\
100  & \(4.00{\times}10^4\) & APG-SC & \(0/3\)
& \(1.98{\times}10^{-5}\) & \(1.23{\times}10^{-4}\) & 1202 & 5.437 \\
100  & \(4.00{\times}10^4\) & Prox-BB & \(3/3\)
& \(6.35{\times}10^{-10}\) & \(3.68{\times}10^{-8}\) & 108 & 1.123 \\
\midrule
300  & \(3.60{\times}10^5\) & DBS-I-AdaTau & \(3/3\)
& \(7.87{\times}10^{-9}\) & \(7.94{\times}10^{-8}\) & 392 & 0.224 \\
300  & \(3.60{\times}10^5\) & FISTA  & \(0/3\)
& \(8.82{\times}10^{-5}\) & \(5.50{\times}10^{-4}\) & 1202 & 5.347 \\
300  & \(3.60{\times}10^5\) & APG-SC & \(0/3\)
& \(7.38{\times}10^{-5}\) & \(4.60{\times}10^{-4}\) & 1202 & 5.307 \\
300  & \(3.60{\times}10^5\) & Prox-BB & \(3/3\)
& \(1.37{\times}10^{-8}\) & \(8.12{\times}10^{-8}\) & 105 & 1.025 \\
\midrule
1000 & \(4.00{\times}10^6\) & DBS-I-AdaTau & \(3/3\)
& \(2.32{\times}10^{-8}\) & \(1.87{\times}10^{-7}\) & 338 & 0.189 \\
1000 & \(4.00{\times}10^6\) & FISTA  & \(0/3\)
& \(9.93{\times}10^{-5}\) & \(6.19{\times}10^{-4}\) & 1202 & 5.260 \\
1000 & \(4.00{\times}10^6\) & APG-SC & \(0/3\)
& \(9.67{\times}10^{-5}\) & \(6.03{\times}10^{-4}\) & 1202 & 5.278 \\
1000 & \(4.00{\times}10^6\) & Prox-BB & \(2/3\)
& \(7.67{\times}10^{-8}\) & \(5.11{\times}10^{-7}\) & 1211 & 12.56 \\
\bottomrule
\end{tabular}
\end{table}

\subsection{SLOPE Logistic Regression Outer Benchmark}
\label{subsec:slope-warm-outer}

We next embed the same SLOPE oracle in the full outer method, which is the
setting that stresses the scaled proximal oracle: the ordinary SLOPE proximal
map is cheap for first-order methods, while each DBS step must solve a scaled
SLOPE proximal subproblem through the two-dimensional localization oracle.
The benchmark compares DBS-I-AdaTau, run with its default warm-started
oracle, against the 0SR1, FISTA-L, MFISTA-L, and Prox-BB
baselines on the two highly correlated regimes \(\rho\in\{0.99,0.995\}\),
whose design condition numbers are about \(10^5\) across seeds.  The
rank-one 0SR1 baseline runs under the same outer iteration cap and solves
its scaled proximal subproblems through the same warm-started oracle, so the
DBS--0SR1 comparison isolates the quality of the metric.  We also run a
no-warm-start DBS ablation with the same \(800\)-iteration cap, the same
stepsize grid, and a fixed cold-oracle residual tolerance \(10^{-4}\).  The
stepsize
grid is \(\eta\in\{0.1,0.5,1,2,5\}\) for DBS and 0SR1,
\(\eta\in\{0.5,1,2\}\) as a multiplier of \(1/L\) for FISTA-L and MFISTA-L,
and \(\eta\in\{0.5,1,2\}\) for Prox-BB.  For each method, seed, \(\rho\),
and target, the stepsize is selected by the target-stopping rule described in
the setup.

Table~\ref{tab:slope-target-accuracy} reports the target-accuracy results at
\(\varepsilon_{\rm obj}=10^{-4}\) and \(5\times10^{-7}\), and
Figure~\ref{fig:slope-warm-gap-time} shows the corresponding median residual
trajectories in wall-clock time.  The main observations are:
\begin{itemize}
    \item Warm starting cuts the per-iteration scaled-proximal work by three-
    to fivefold.  Under the same \(800\)-iteration cap, the cold ablation
    spends about \(198\) and \(186\) ordinary proximal evaluations per
    iteration at \(\rho=0.99\) and \(0.995\), whereas the selected
    warm-started DBS runs spend about \(39\) and \(51\).  The cold ablation
    reaches the moderate target but misses \(5\times10^{-7}\) on every seed.
    \item Compared with first-order methods, DBS uses far fewer gradients at
    the stringent target.  FISTA-L reaches \(5\times10^{-7}\) on all seeds but
    needs median iteration counts \(1946\) and \(1957\), compared with
    \(433\) and \(750\) for DBS.  DBS is faster at \(\rho=0.99\) and
    comparable at \(\rho=0.995\); at the moderate target \(10^{-4}\), the
    cheaper FISTA-L iterations remain faster in wall-clock time.
    \item The 0SR1 comparison shows the benefit of the DBS metric under the
    same warm-started oracle.  Although the per-iteration proximal costs are
    similar, 0SR1 needs more iterations at reached targets and misses the
    tight target at \(\rho=0.995\), while DBS reaches every seed.
\end{itemize}
Overall, the SLOPE outer test shows that the warm-started oracle keeps the
scaled-proximal work controlled and turns the stronger DBS metric into a
large gradient-evaluation advantage.

\begin{table}[t]
\centering
\caption{Target-accuracy SLOPE logistic comparison on highly correlated
instances, for targets \(\varepsilon_{\rm obj}=10^{-4}\) (top block) and
\(5\times10^{-7}\) (bottom block).  For each method, seed, and target, the stepsize
is selected by the target-stopping rule described in the setup.  Cap is the
maximum number of outer iterations allowed.  Iter. and Target time are medians
over successful seeds.  SProx/it. is the median number of ordinary SLOPE
proximal evaluations used per outer iteration inside the scaled-proximal
oracle; DBS-cold is the no-warm-start DBS ablation.  Budget time is the median
total time of the selected runs, including full-budget runs for unsuccessful
seeds.}
\label{tab:slope-target-accuracy}
\scriptsize
\setlength{\tabcolsep}{3pt}
\begin{tabular}{@{}clcccccc@{}}
\toprule
\(\rho\) & Method & Cap & Succ. & Iter. & Target (s) & SProx/it. & Budget (s) \\
\midrule
\multicolumn{8}{@{}l}{\emph{Target \(\varepsilon_{\rm obj}=10^{-4}\)}}\\
\midrule
0.99 & DBS-I-AdaTau & 800 & \(3/3\) & 203 & 6.08 & 38.6 & 13.9 \\
0.99 & DBS-cold & 800 & \(3/3\) & 179 & 6.93 & 198.0 & 32.4 \\
0.99 & 0SR1 & 800 & \(3/3\) & 338 & 6.60 & 37.0 & 13.2 \\
0.99 & FISTA-L & 2500 & \(3/3\) & 371 & 2.77 & -- & 18.2 \\
0.99 & MFISTA-L & 2500 & \(3/3\) & 371 & 3.25 & -- & 25.3 \\
0.99 & Prox-BB & 2500 & \(3/3\) & 837 & 9.04 & -- & 27.3 \\
\midrule
0.995 & DBS-I-AdaTau & 800 & \(3/3\) & 299 & 7.63 & 51.0 & 16.4 \\
0.995 & DBS-cold & 800 & \(3/3\) & 228 & 8.25 & 185.7 & 30.0 \\
0.995 & 0SR1 & 800 & \(3/3\) & 544 & 10.96 & 53.6 & 16.5 \\
0.995 & FISTA-L & 2500 & \(3/3\) & 478 & 3.74 & -- & 17.7 \\
0.995 & MFISTA-L & 2500 & \(3/3\) & 478 & 4.35 & -- & 24.9 \\
0.995 & Prox-BB & 2500 & \(3/3\) & 1327 & 14.53 & -- & 26.3 \\
\midrule
\multicolumn{8}{@{}l}{\emph{Target \(\varepsilon_{\rm obj}=5\times10^{-7}\)}}\\
\midrule
0.99 & DBS-I-AdaTau & 800 & \(3/3\) & 433 & 8.94 & 38.6 & 13.9 \\
0.99 & DBS-cold & 800 & \(0/3\) & -- & -- & 195.3 & 31.7 \\
0.99 & 0SR1 & 800 & \(3/3\) & 744 & 12.75 & 37.0 & 13.2 \\
0.99 & FISTA-L & 2500 & \(3/3\) & 1946 & 14.10 & -- & 18.2 \\
0.99 & MFISTA-L & 2500 & \(2/3\) & 1654.5 & 16.12 & -- & 25.3 \\
0.99 & Prox-BB & 2500 & \(0/3\) & -- & -- & -- & 27.3 \\
\midrule
0.995 & DBS-I-AdaTau & 800 & \(3/3\) & 750 & 15.57 & 51.0 & 16.4 \\
0.995 & DBS-cold & 800 & \(0/3\) & -- & -- & 189.8 & 31.2 \\
0.995 & 0SR1 & 800 & \(0/3\) & -- & -- & 53.6 & 16.5 \\
0.995 & FISTA-L & 2500 & \(3/3\) & 1957 & 14.17 & -- & 17.7 \\
0.995 & MFISTA-L & 2500 & \(0/3\) & -- & -- & -- & 24.9 \\
0.995 & Prox-BB & 2500 & \(0/3\) & -- & -- & -- & 28.0 \\
\bottomrule
\end{tabular}
\end{table}

\begin{figure}[t]
\centering
\includegraphics[width=\textwidth]{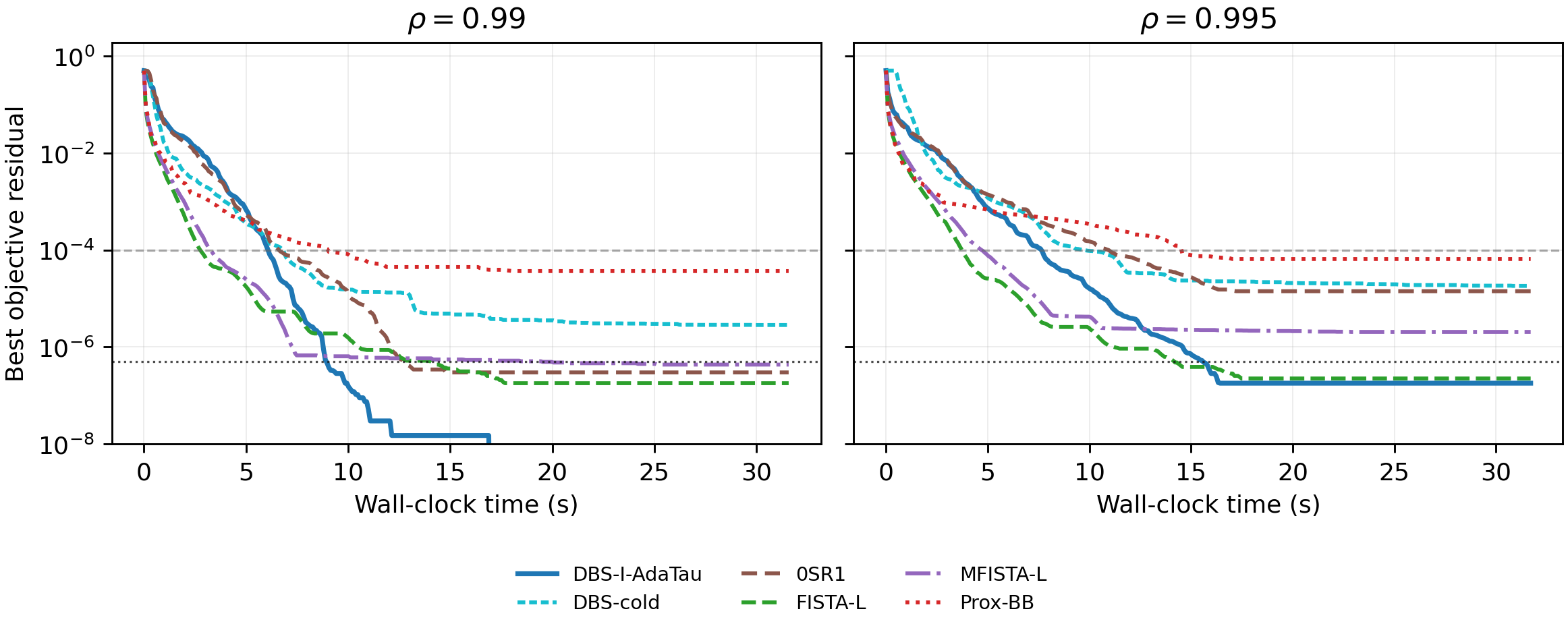}
\caption{Median best objective residual versus wall-clock time for the SLOPE
outer benchmark, using the same per-seed stepsize selections as the
\(5\times10^{-7}\) block of Table~\ref{tab:slope-target-accuracy}, including
the cold-start ablation.  The dashed and dotted horizontal lines mark
\(10^{-4}\) and \(5\times10^{-7}\), respectively.  Residuals are clipped at
\(10^{-8}\): below this level they reflect the resolution of the best-observed
reference rather than additional accuracy.}
\label{fig:slope-warm-gap-time}
\end{figure}

\medskip\noindent\textbf{Real-data SLOPE check.}
To check whether the gradient-evaluation advantage translates into time when
the smooth-loss pass is more expensive, we also run a single-seed real-data
SLOPE experiment on \texttt{real-sim}.  We use \(30000\) training examples,
the \(500\) most frequent base features, and augmented each base feature with
three noisy correlated copies (\(d=2000\), copy noise \(0.01\)).  The target is
\(F(x_k)-F_{\rm ref}\le5\times10^{-7}\); the estimated design condition
number is about \(2\times10^6\).  All methods follow the protocol of the
synthetic SLOPE benchmark under a common \(300\)-iteration budget, and the
stepsize is selected by the target-stopping rule described in the setup.
Table~\ref{tab:real-slope-flagship} shows that
DBS-I-AdaTau reaches the target in \(168\) iterations and \(7.99\) seconds.
The shared-oracle 0SR1 baseline needs \(255\) iterations and \(11.42\)
seconds, MFISTA-L needs \(292\) iterations and \(13.19\) seconds, while
FISTA-L and Prox-BB do not reach the target within the iteration
budget.  Thus, on this larger real-data instance, the iteration advantage
observed in the synthetic SLOPE benchmark becomes a clear wall-clock advantage.

\begin{table}[t]
\centering
\caption{Real-data SLOPE logistic regression on \texttt{real-sim} with
\(n=30000\), \(d=2000\), and condition estimate about \(2\times 10^6\).  The
target is \(F(x_k)-F_{\rm ref}\le5\times10^{-7}\); all methods share a
\(300\)-iteration budget and a common best-observed reference.}
\label{tab:real-slope-flagship}
\scriptsize
\setlength{\tabcolsep}{3pt}
\begin{tabular}{@{}lccccc@{}}
\toprule
Method & Best \(\eta\) & Iter. & Target (s) & Final gap & Budget (s) \\
\midrule
DBS-I-AdaTau & 2.0 & 168 & 7.99 & \(5.96{\times}10^{-8}\) & 13.13 \\
0SR1 & 1.0 & 255 & 11.42 & \(1.04{\times}10^{-7}\) & 13.16 \\
FISTA-L & 2.0 & -- & -- & \(1.41{\times}10^{-5}\) & 10.37 \\
MFISTA-L & 2.0 & 292 & 13.19 & \(2.24{\times}10^{-7}\) & 13.56 \\
Prox-BB & 0.5 & -- & -- & \(7.61{\times}10^{-6}\) & 20.62 \\
\bottomrule
\end{tabular}
\end{table}

\subsection{Group-Lasso Logistic Regression}
\label{subsec:group-lasso-logistic}

We finally evaluate the full DBS outer method on logistic regression with a
nonoverlapping group-lasso penalty,
\[
    h(x)=\lambda_{\rm g}\sum_{g\in\mathcal G}\|x_g\|_2 .
\]
This end-to-end target-accuracy benchmark tests the method on a second
penalty family, in the regime where the scaled proximal step is cheap.
The ordinary proximal map is block soft-thresholding over the groups,
so the scalar DBS specialization again uses only ordinary proximal evaluations,
but each scaled proximal step is considerably cheaper than in the SLOPE case.
The synthetic instances use \(n=3000\), \(d=500\), group size \(20\), six
active groups, \(\lambda_{\rm g}=10^{-3}\), and \(\mu_{\rm r}=10^{-3}\).  We
use the same three seeds \(\{42,43,44\}\), a maximum outer budget \(K=100\),
and the stepsize grid \(\eta\in\{0.5,1,2\}\) for all methods.

Table~\ref{tab:group-lasso-outer} reports the target-accuracy results for
\(\rho=0.9\) and \(\rho=0.99\) under the target \(r_k\le10^{-6}\).  The
low-cost FISTA-L and Prox-BB baselines are run with a larger
\(3000\)-iteration cap, so failure means that no tested stepsize reaches the
same objective target within that extended budget.  The 0SR1 baseline is
reported with the same target-stopping quantities under its \(100\)-iteration
cap.  DBS-I-AdaTau reaches the target on every seed in both regimes and uses
an order of magnitude fewer iterations and gradient evaluations than FISTA-L
(median \(70\) versus \(672\) at \(\rho=0.9\) and \(86\) versus \(770\) at
\(\rho=0.99\)) at comparable median target time (\(2.07\) versus \(1.61\)
seconds and \(1.91\) versus \(1.97\) seconds), showing that the adaptive
inverse-side DBS metric converts its iteration advantage into competitive
end-to-end time when the ordinary proximal map is inexpensive.  0SR1, run
with the same warm-started oracle, reaches the target on all three seeds at
\(\rho=0.9\), in more iterations than DBS (median \(95\)), and on two of
three seeds at \(\rho=0.99\), where the remaining seed levels off at
\(4.7\times10^{-6}\).  Prox-BB reaches the target quickly on the successful
seed, but succeeds on only one of three seeds in each correlation regime.

\begin{table}[t]
\centering
\caption{Target-accuracy group-lasso logistic comparison.  For each method and
seed, the stepsize is selected by the target-stopping rule described in the
setup.  Cap is the maximum number of outer iterations allowed in the
target-stopping run.  Iter. and Target time are medians over successful seeds.
Budget time is the median total time of the selected runs, including
full-budget runs for unsuccessful seeds.}
\label{tab:group-lasso-outer}
\scriptsize
\setlength{\tabcolsep}{3pt}
\begin{tabular}{@{}clccccc@{}}
\toprule
\(\rho\) & Method & Cap & Succ. & Iter. & Target (s) & Budget (s) \\
\midrule
0.9 & DBS-I-AdaTau & 100 & \(3/3\) & 70.0 & 2.07 & 2.46 \\
0.9 & 0SR1 & 100 & \(3/3\) & 95.0 & 2.24 & 2.28 \\
0.9 & FISTA-L & 3000 & \(3/3\) & 672.0 & 1.61 & 7.19 \\
0.9 & Prox-BB & 3000 & \(1/3\) & 127.0 & 0.39 & 9.05 \\
\midrule
0.99 & DBS-I-AdaTau & 100 & \(3/3\) & 86.0 & 1.91 & 2.06 \\
0.99 & 0SR1 & 100 & \(2/3\) & 84.5 & 1.77 & 2.05 \\
0.99 & FISTA-L & 3000 & \(3/3\) & 770.0 & 1.97 & 7.27 \\
0.99 & Prox-BB & 3000 & \(1/3\) & 189.0 & 0.61 & 8.81 \\
\bottomrule
\end{tabular}
\end{table}

\medskip\noindent\textbf{Real-data group-lasso check.}
As a real-data counterpart, we also run a single-seed experiment on
\texttt{real-sim} with the same
grouped-copy construction as the real-data SLOPE check, but with a
group-lasso penalty.  To obtain a controlled grouped-copy instance, we keep
all available training examples after the train-test split (\(n=57847\)),
retain the \(500\) most frequent base features, and augment each base feature
with three noisy correlated copies.  The resulting problem has \(d=2000\),
\(500\) groups of size \(4\), copy noise \(0.01\),
\(\lambda_{\rm g}=10^{-3}\), \(\mu_{\rm r}=10^{-3}\), and condition estimate
about \(2.4\times10^6\).  Each group contains one base feature and its three
copies, so the block penalty is aligned with the correlation structure of the
data.  The target is \(F(x_k)-F_{\rm ref}\le10^{-6}\).  DBS and 0SR1 use a
\(300\)-iteration cap, while FISTA-L, MFISTA-L, and Prox-BB use a
\(1000\)-iteration cap.  The stepsize is selected by the target-stopping rule
described in the setup.

Table~\ref{tab:real-group-lasso} shows that DBS-I-AdaTau reaches the target
in \(159\) iterations and \(31.02\) seconds, faster than 0SR1, FISTA-L,
MFISTA-L, and Prox-BB.  MFISTA-L is the closest time competitor, reaching the
target in \(31.58\) seconds but requiring \(316\) iterations; FISTA-L needs
\(672\) iterations and \(51.79\) seconds.  This real-data check complements
the synthetic group-lasso results by showing that the iteration advantage can
also translate into the fastest target time for a block-separable penalty.

\begin{table}[t]
\centering
\caption{Real-data group-lasso logistic regression on \texttt{real-sim} with
\(n=57847\), \(d=2000\), \(500\) groups of size \(4\), and condition estimate
about \(2.4\times10^6\).  The target is
\(F(x_k)-F_{\rm ref}\le10^{-6}\); the reference is the minimum of a Prox-BB
reference solve and the best value observed by any run.  Final gap is measured
at the iteration cap, so for nonmonotone methods it may exceed a target reached
earlier.}
\label{tab:real-group-lasso}
\scriptsize
\setlength{\tabcolsep}{3pt}
\begin{tabular}{@{}lccccc@{}}
\toprule
Method & Best \(\eta\) & Iter. & Target (s) & Final gap & Budget (s) \\
\midrule
DBS-I-AdaTau & 1.0 & 159 & 31.02 & \(7.45{\times}10^{-8}\) & 45.70 \\
0SR1 & 5.0 & 228 & 38.65 & \(8.94{\times}10^{-8}\) & 44.66 \\
FISTA-L & 2.0 & 672 & 51.79 & \(1.09{\times}10^{-6}\) & 77.24 \\
MFISTA-L & 2.0 & 316 & 31.58 & \(1.04{\times}10^{-7}\) & 109.56 \\
Prox-BB & 2.0 & 267 & 33.26 & \(3.43{\times}10^{-7}\) & 126.90 \\
\bottomrule
\end{tabular}
\end{table}

\subsection{Summary}

The experiments support three complementary conclusions.  First, the scalar
specialization turns the diagonal-plus-rank-one factorization into a practical
ordinary-proximal oracle: it solves high-dimensional SLOPE scaled proximal
subproblems of condition number up to \(4.0\times 10^6\), reaching certified
reference-level accuracy with a few hundred ordinary proximal evaluations.
This shows that Broyden-type rank-one
factorizations can be paired with structured nonsmooth penalties through
ordinary proximal evaluations in the scalar setting.

Second, the SLOPE outer benchmark shows how the same oracle behaves inside the
full method for sorted \(\ell_1\) regularization.  The warm start built into
the oracle, together with
the relative localization tolerance, keeps the selected runs at about
\(39\)--\(51\) ordinary proximal evaluations per outer iteration on average,
versus about \(186\)--\(198\) for the same-cap cold-started ablation.
As a consequence, DBS-I-AdaTau reaches
the stringent target \(5\times10^{-7}\) on every SLOPE outer seed with far fewer
outer iterations and gradient evaluations than FISTA-L, while the cold
ablation reaches no seed at this target and stalls at the finite-accuracy
floor set by the fixed inner tolerance.  DBS remains competitive in
wall-clock time.  The rank-one 0SR1 baseline, run with
the same warm-started oracle at matched per-iteration cost, needs roughly
\(1.7\)--\(1.8\) times as many outer iterations at every reached target and misses
the tight target in the harder regime, isolating the value of the rank-two
Broyden-class metric.  On the larger \texttt{real-sim} SLOPE check, the same
target is reached in \(168\) iterations and \(7.99\) seconds,
faster than 0SR1 and MFISTA-L, while FISTA-L and Prox-BB do not reach the
target within the common iteration budget.

Finally, the group-lasso benchmark shows the end-to-end value of the adaptive
inverse-side metric when the ordinary proximal map is inexpensive.
DBS-I-AdaTau reaches \(r_k\le10^{-6}\) on every seed, uses an order of magnitude
fewer iterations than FISTA-L at comparable median target time, and is
more reliable than Prox-BB on the tested correlated synthetic instances.  On
the real-data grouped-copy instance, the same method reaches the target in
the fewest iterations and the shortest target time among all tested methods.

\section{Conclusions}
\label{sec:conclusions}

This paper developed diagonal-plus-rank-one factorizations for
diagonally initialized zero-memory quasi-Newton metrics in composite
optimization. The diagonal initial scalings satisfy weak secant equations, and
the inverse-side and Hessian-side factors impose the corresponding secant
conditions for the damped curvature pair while recovering diagonally initialized
zero-memory DFP/BFGS-type Broyden members at the matrix level. The resulting DBS
procedure exploits the factor structure to compute the scaled proximal step
by combining a two-dimensional monotone localization accelerator with a
certified bilevel solve, both using diagonal-metric proximal evaluations. The scalar implementation studied numerically is
recovered by taking the diagonal scaling to be a multiple of the identity, in
which case the inner oracle requires only ordinary proximal evaluations of the
nonsmooth term.  In this scalar specialization, DBS gives a derivative-free,
ordinary-proximal-oracle route
to diagonally initialized zero-memory Broyden-convex members generated by
diagonal weak-secant scalings, while preserving the single-proximal-evaluation
oracle structure of the symmetric rank-one method in a two-dimensional search.

The analysis established admissibility conditions, uniform positive
definiteness, linear convergence under strong convexity, and the corresponding
complexity bounds for the two-sided DBS methods.  The numerical results show
three practical consequences of the factorization.  On SLOPE/OWL, DBS provides
an effective scaled-proximal oracle using only ordinary proximal evaluations.
On the SLOPE outer benchmark, the warm-started oracle, which predicts the
scalar pair of the next scaled proximal point from the factor image of the
current iterate and tracks the outer progress with a relative localization
tolerance, keeps the scaled-proximal overhead low and exposes the
iteration and gradient-evaluation advantage of the adaptive metric at tight
target accuracies; on the larger \texttt{real-sim} SLOPE check this advantage
also appears directly in wall-clock target time.  On the group-lasso
target-accuracy benchmarks, the adaptive
inverse-side metric reaches the prescribed objective residual reliably, with
far fewer outer iterations at comparable median target time relative to
Lipschitz-normalized FISTA on the synthetic correlated instances and the
fastest target time on the real-data grouped-copy instance.  The metric
ablation further shows that the inverse-side adaptive variant gives the
strongest target-reaching behavior among the tested DBS choices.

Several extensions remain natural.  First, the scalar specialization studied
here should be compared further with fully diagonal and block-diagonal
initializations when the diagonal-metric proximal map is also inexpensive.
Promising choices include projected BB-type or coordinatewise curvature
estimates with explicit conditioning safeguards
\cite{barzilai1988two,park2020variable,yu2023mini}, blockwise scalings aligned
with active-set structure, and adaptive rules that fall back to the scalar
metric unless the richer scaling predicts a clear reduction in the
proximal-gradient residual.  Second, the proximal-oracle nature of DBS makes it
attractive for regularizers whose ordinary proximal mappings are available but
whose generalized Jacobians are cumbersome, such as sorted \(\ell_1\) penalties,
fused lasso, total-variation penalties, overlapping group regularizers, and
spectral regularizers.  These extensions would further clarify the trade-off
between specialized semismooth Newton solvers, derivative-free inner solvers,
and factorization-based monotone proximal searches.  Third, the outer
iteration advantage observed here should be tested on models where each
gradient or smooth-loss evaluation is substantially more expensive, such as
large-scale or matrix-structured SLOPE variants.  In such regimes, the reduction
in gradient evaluations may translate more directly into wall-clock gains.

\section*{Acknowledgments}
The author is grateful to Dr. Xiaoyu Wang for sharing experimental data and code,
and to Mr. Yichuan Cao for providing computational resources.  This work was
conducted at the State Key Laboratory of Scientific and Engineering Computing
(LSEC), whose research environment is appreciated.

\section*{Disclosure statement}
No potential conflict of interest was reported by the authors.

\section*{Funding}
This work was partially funded by the National Natural Science Foundation of
China under grant No.~12288201.

\bibliographystyle{tfs}
\bibliography{references}

@inproceedings{andrew2007scalable,
  author    = {Andrew, Galen and Gao, Jianfeng},
  title     = {Scalable training of {L1}-regularized log-linear models},
  booktitle = {Proceedings Of The 24th International Conference On Machine Learning},
  pages     = {33--40},
  publisher = {ACM},
  year      = {2007},
  doi       = {10.1145/1273496.1273501}
}

@article{barzilai1988two,
  author  = {Barzilai, Jonathan and Borwein, Jonathan M.},
  title   = {Two-point step size gradient methods},
  journal = {IMA Journal Of Numerical Analysis},
  volume  = {8},
  number  = {1},
  pages   = {141--148},
  year    = {1988},
  doi     = {10.1093/imanum/8.1.141}
}

@book{bauschkeCombettes2017,
  author    = {Bauschke, Heinz H. and Combettes, Patrick L.},
  title     = {Convex analysis and monotone operator theory in Hilbert spaces},
  series    = {CMS Books In Mathematics},
  edition   = {2},
  publisher = {Springer},
  address   = {Cham},
  year      = {2017},
  doi       = {10.1007/978-3-319-48311-5}
}

@article{beck2009fast,
  author  = {Beck, Amir and Teboulle, Marc},
  title   = {A fast iterative shrinkage-thresholding algorithm for linear inverse problems},
  journal = {SIAM Journal On Imaging Sciences},
  volume  = {2},
  number  = {1},
  pages   = {183--202},
  year    = {2009},
  doi     = {10.1137/080716542}
}

@inproceedings{becker2012quasi,
 author = {Becker, Stephen and Fadili, Jalal},
 booktitle = {Advances in Neural Information Processing Systems},
 editor = {F. Pereira and C.J. Burges and L. Bottou and K. Weinberger},
 pages = {2618--2626},
 publisher = {Curran Associates, Inc.},
 title = {A quasi-Newton proximal splitting method},
 volume = {25},
 year = {2012}
}

@article{becker2011templates,
  author  = {Becker, Stephen R. and Cand{\`e}s, Emmanuel J. and Grant, Michael C.},
  title   = {Templates for convex cone problems with applications to sparse signal recovery},
  journal = {Mathematical Programming Computation},
  volume  = {3},
  number  = {3},
  pages   = {165--218},
  year    = {2011},
  doi     = {10.1007/s12532-011-0029-5}
}

@article{becker2019quasi,
  author  = {Becker, Stephen and Fadili, Jalal and Ochs, Peter},
  title   = {On quasi-Newton forward-backward splitting: proximal calculus and convergence},
  journal = {SIAM Journal On Optimization},
  volume  = {29},
  number  = {4},
  pages   = {2445--2481},
  year    = {2019},
  doi     = {10.1137/18M1167152}
}

@article{birgin2000nonmonotone,
  author  = {Birgin, Ernesto G. and Mart{\'i}nez, Jos{\'e} Mario and Raydan, Marcos},
  title   = {Nonmonotone spectral projected gradient methods on convex sets},
  journal = {SIAM Journal On Optimization},
  volume  = {10},
  number  = {4},
  pages   = {1196--1211},
  year    = {2000},
  doi     = {10.1137/S1052623497330963}
}

@article{bonettini2016extrapolation,
  author  = {Bonettini, Silvia and Porta, Federica and Ruggiero, Valeria},
  title   = {A variable metric forward-backward method with extrapolation},
  journal = {SIAM Journal On Scientific Computing},
  volume  = {38},
  number  = {4},
  pages   = {A2558--A2584},
  year    = {2016},
  doi     = {10.1137/15M1025098}
}

@article{bonettini2016variable,
  author  = {Bonettini, Silvia and Loris, Ignace and Porta, Federica and Prato, Marco},
  title   = {Variable metric inexact line-search-based methods for nonsmooth optimization},
  journal = {SIAM Journal On Optimization},
  volume  = {26},
  number  = {2},
  pages   = {891--921},
  year    = {2016},
  doi     = {10.1137/15M1019325}
}

@incollection{combettes2011proximal,
  author    = {Combettes, Patrick L. and Pesquet, Jean-Christophe},
  title     = {Proximal splitting methods in signal processing},
  booktitle = {Fixed-Point Algorithms For Inverse Problems In Science And Engineering},
  editor    = {Bauschke, Heinz H. and Burachik, Regina S. and Combettes, Patrick L. and Elser, Veit and Luke, D. Russell and Wolkowicz, Henry},
  pages     = {185--212},
  publisher = {Springer},
  address   = {New York},
  year      = {2011},
  doi       = {10.1007/978-1-4419-9569-8_10}
}

@article{curtisRobinsonZhou2020,
  author  = {Curtis, Frank E. and Robinson, Daniel P. and Zhou, Baoyu},
  title   = {A self-correcting variable-metric algorithm framework for nonsmooth optimization},
  journal = {IMA Journal Of Numerical Analysis},
  volume  = {40},
  number  = {2},
  pages   = {1154--1187},
  year    = {2020},
  doi     = {10.1093/imanum/drz008}
}

@article{dennisSchnabel1979,
  author  = {Dennis, Jr., J. E. and Schnabel, R. B.},
  title   = {Least change secant updates for quasi-Newton methods},
  journal = {SIAM Review},
  volume  = {21},
  number  = {4},
  pages   = {443--459},
  year    = {1979},
  doi     = {10.1137/1021091}
}

@article{dennisWolkowicz1993,
  author  = {Dennis, Jr., J. E. and Wolkowicz, Henry},
  title   = {Sizing and least-change secant methods},
  journal = {SIAM Journal On Numerical Analysis},
  volume  = {30},
  number  = {5},
  pages   = {1291--1314},
  year    = {1993},
  doi     = {10.1137/0730067}
}

@article{fletcher1970new,
  author  = {Fletcher, Roger},
  title   = {A new approach to variable metric algorithms},
  journal = {The Computer Journal},
  volume  = {13},
  number  = {3},
  pages   = {317--322},
  year    = {1970},
  doi     = {10.1093/comjnl/13.3.317}
}

@article{goldfarb1976factorized,
  author  = {Goldfarb, Donald},
  title   = {Factorized variable metric methods for unconstrained optimization},
  journal = {Mathematics Of Computation},
  volume  = {30},
  number  = {136},
  pages   = {796--811},
  year    = {1976},
  doi     = {10.1090/S0025-5718-1976-0423804-2}
}

@article{lee2014proximal,
  author  = {Lee, Jason D. and Sun, Yuekai and Saunders, Michael A.},
  title   = {Proximal Newton-type methods for minimizing composite functions},
  journal = {SIAM Journal On Optimization},
  volume  = {24},
  number  = {3},
  pages   = {1420--1443},
  year    = {2014},
  doi     = {10.1137/130921428}
}

@article{nesterov2013gradient,
  author  = {Nesterov, Yurii},
  title   = {Gradient methods for minimizing composite functions},
  journal = {Mathematical Programming},
  volume  = {140},
  number  = {1},
  pages   = {125--161},
  year    = {2013},
  doi     = {10.1007/s10107-012-0629-5}
}

@book{nocedalWright2006,
  author    = {Nocedal, Jorge and Wright, Stephen J.},
  title     = {Numerical optimization},
  series    = {Springer Series In Operations Research And Financial Engineering},
  edition   = {2},
  publisher = {Springer},
  address   = {New York},
  year      = {2006},
  doi       = {10.1007/978-0-387-40065-5}
}

@article{orenLuenberger1974,
  author  = {Oren, Shmuel S. and Luenberger, David G.},
  title   = {Self-scaling variable metric ({SSVM}) algorithms: part {I}, criteria and sufficient conditions for scaling a class of algorithms},
  journal = {Management Science},
  volume  = {20},
  number  = {5},
  pages   = {845--862},
  year    = {1974},
  doi     = {10.1287/mnsc.20.5.845}
}

@inproceedings{park2020variable,
  author    = {Park, Youngsuk and Dhar, Sauptik and Boyd, Stephen and Shah, Mohak},
  title     = {Variable metric proximal gradient method with diagonal {Barzilai--Borwein} stepsize},
  booktitle = {ICASSP 2020 -- 2020 IEEE International Conference On Acoustics, Speech And Signal Processing},
  pages     = {3597--3601},
  publisher = {IEEE},
  year      = {2020},
  doi       = {10.1109/ICASSP40776.2020.9054193}
}

@article{powell1978lagrangian,
  author  = {Powell, M. J. D.},
  title   = {Algorithms for nonlinear constraints that use Lagrangian functions},
  journal = {Mathematical Programming},
  volume  = {14},
  number  = {1},
  pages   = {224--248},
  year    = {1978},
  doi     = {10.1007/BF01588967}
}

@article{raydan1997barzilai,
  author  = {Raydan, Marcos},
  title   = {The {Barzilai} and {Borwein} gradient method for the large scale unconstrained minimization problem},
  journal = {SIAM Journal On Optimization},
  volume  = {7},
  number  = {1},
  pages   = {26--33},
  year    = {1997},
  doi     = {10.1137/S1052623494266365}
}

@article{salzo2017variable,
  author  = {Salzo, Saverio},
  title   = {The variable metric forward-backward splitting algorithm under mild differentiability assumptions},
  journal = {SIAM Journal On Optimization},
  volume  = {27},
  number  = {4},
  pages   = {2153--2181},
  year    = {2017},
  doi     = {10.1137/16M1073741}
}

@article{yu2023mini,
  author  = {Yu, Teng-Teng and Liu, Xin-Wei and Dai, Yu-Hong and Sun, Jie},
  title   = {A mini-batch proximal stochastic recursive gradient algorithm with diagonal {Barzilai--Borwein} stepsize},
  journal = {Journal Of The Operations Research Society Of China},
  volume  = {11},
  number  = {2},
  pages   = {277--307},
  year    = {2023},
  doi     = {10.1007/s40305-022-00436-2}
}

@book{grotschel2012geometric,
  title={Geometric algorithms and combinatorial optimization},
  author={Gr{\"o}tschel, Martin and Lov{\'a}sz, L{\'a}szl{\'o} and Schrijver, Alexander},
  volume={2},
  year={2012},
  publisher={Springer Science \& Business Media},
  doi={10.1007/978-3-642-78240-4}
}

@article{blandGoldfarbTodd1981ellipsoid,
author = {Bland, Robert G. and Goldfarb, Donald and Todd, Michael J.},
title = {Feature article—the ellipsoid method: a survey},
journal = {Operations Research},
volume = {29},
number = {6},
pages = {1039-1091},
year = {1981},
doi = {10.1287/opre.29.6.1039}
}
\end{document}